\newcolumntype{E}{>{\setbox0=\hbox\bgroup}c<{\egroup}@{}}
\definecolor{grey}{rgb}{0.5,0.5,0.5}
\begin{document}

\sloppy
\newtheorem{axiom}{Axiom}
\newtheorem{conjecture}[axiom]{Conjecture}
\newtheorem{corollary}[axiom]{Corollary}
\newtheorem{definition}[axiom]{Definition}
\newtheorem{example}[axiom]{Example}
\newtheorem{lemma}[axiom]{Lemma}
\newtheorem{observation}[axiom]{Observation}
\newtheorem{proposition}[axiom]{Proposition}
\newtheorem{theorem}[axiom]{Theorem}

\newcommand{\proof}{\emph{Proof.}\ \ }
\newcommand{\qed}{~~$\Box$}
\newcommand{\rz}{{\mathbb{R}}}
\newcommand{\nz}{{\mathbb{N}}}
\newcommand{\zz}{{\mathbb{Z}}}
\newcommand{\eps}{\varepsilon}
\newcommand{\cei}[1]{\lceil #1\rceil}
\newcommand{\flo}[1]{\left\lfloor #1\right\rfloor}
\newcommand{\seq}[1]{\langle #1\rangle}

\renewcommand{\algorithmicrequire}{\textbf{Input:}}
\renewcommand{\algorithmicensure}{\textbf{Output:}}

\newcommand{\toSolve}{\fontencoding{OT1}\fontfamily{cmtt}\fontseries{m}\fontshape{sc}\fontsize{12}{16pt}\selectfont}

\newcommand{\qap}{\mbox{\sc QAP}}
\newcommand{\minqap}{\mbox{$\min$-{\sc QAP}}}
\newcommand{\maxqap}{\mbox{$\max$-{\sc QAP}}}
\newcommand{\onion}{\mbox{\sc Onion}}
\newcommand{\ocone}{\mbox{\sc OnionCone}}
\newcommand{\aaa}{\alpha}
\newcommand{\minctv}{\mbox{$\min$-{\sc CTV}}}
\newcommand{\maxctv}{\mbox{$\max$-{\sc CTV}}}
\newcommand{\cbar}{\overline{C}{}}
\newcommand{\var}{\mbox{\sc Var}}

\title{{\bf Heuristics for the data arrangement problem on regular trees}}
\author{\sc Eranda \c{C}ela\thanks{{\tt cela@opt.math.tu-graz.ac.at}. 
Institut f\"ur Optimierung und Diskrete Mathematik, TU Graz, Steyrergasse 30, A-8010 Graz, Austria}
\and
\sc Rostislav Stan\v{e}k\thanks{{\tt rostislav.stanek@uni-graz.at}.
Institut f\"ur Statistik und Operations Research, Universit\"at Graz,
 Universit\"atsstra{\ss}e 15,  Bauteil E/III, A-8010 Graz, Austria} 
}
\date{}
\maketitle

\begin{abstract}

The data arrangement problem on regular trees (DAPT) consists in assigning the vertices of a given graph $G$ to the leaves of a $d$-regular tree $T$ such that 
the sum of the pairwise distances of all pairs of leaves in $T$ which correspond to edges of $G$ is minimised. 
Luczak and Noble~\cite{LuNo02} have shown that this problem is  $NP$-hard for every fixed $d\ge 2$. 

In this paper we propose construction and local search heuristics for the DAPT and introduce a lower bound for this problem. 
The analysis of the performance of the heuristics is based on two considerations: a) the quality of the solutions produced by the heuristics as  compared to  
the respective lower bounds b) for a special class of instances  with  known optimal solution we evaluate the gap between the optimal value of the objective function 
 and the objective function value attained by the  heuristic solution, respectively.

\medskip\noindent\emph{Keywords.}
Combinatorial optimisation; data  arrangement  problem; regular trees; 
heuristics.
\end{abstract}

\medskip
\section{Introduction}
\label{intro:sec}

Given an undirected graph $G = (V(G), E(G))$ with $|V(G)| = n$,  an undirected graph $H=(V(H),E(H))$ with $|V(H)|\ge n$ and some subset $B$  of the vertex set   of 
 $H$, $B\subseteq V(H)$, with $|B|\ge n$, the {\sl generic graph embedding problem}\/(GEP) consists of finding an injective  embedding of the vertices of 
$G$ into the vertices in $B$   such that some prespecified objective function is minimised. 
Throughout this paper  we will call $G$ {\sl the  guest graph}\/ and $H$ {\sl the host graph}. 
A commonly used objective function maps an embedding $\phi\colon V(G)\to B$ to $$\sum_{(i,j)\in E(G)}d(\phi(i)\phi(j))\, ,$$ where $d(x,y)$ denotes 
 the length of the shortest  path between $x$ and $y$ in $H$. 
The host graph $H$ may be a weighted or a non-weighted graph; in the second cases the path lengths coincide with the respective number of edges.  
Given a non-negative number $A\in \rz$,  the decision version of the $GEP$ asks whether there is an injective embedding $\phi\colon V(G)\to B$ such that 
the objective function does not exceed $A$.  
\smallskip

Different versions of GEP have been studied in the literature; the {\sl linear arrangement problem}, where the 
guest graph is a one dimensional equidistant grid with $n$ vertices, see \cite{Chu84,JuMo92,Shi79}, is probably  the most prominent among them.
A number of other classical and well known  combinatorial optimisation problems can be seen as special cases of the GEP, 
as  e.g.\ the Hamiltonian cycle problem, the Hamiltonian path problem and  the graph isomorphism problem 
(see e.g.~\cite{CeSta13} for a more detailed discussion of the relationship between these problems).  
\smallskip

This paper deals with the version of the GEP where the guest graph $G$ has $n$ vertices, the host graph $H$ is a complete $d$-regular tree of  height  $\lceil\log_d{n}\rceil$ 
and the set $B$ consists of the leaves of $H$. From now on we we will denote the host graph by $T$. The height of $T$ as specified above guarantees 
that the number $|B|$ of leaves  fulfills $|B|\ge n$ 
and that the number of the predecessors of the leaves in $T$ is smaller than $n$.  Thus $\lceil\log_d{n}\rceil$ is the smallest height of a $d$-regular tree which is able to accommodate 
an injective embedding  of the vertices of  the guest graph on its leaves.   This problem is originally motivated by real problems in communication 
systems and was first posed by  Luczak and Noble~\cite{LuNo02}. We will call this version of the GEP {\sl the data arrangement problem on regular trees (DAPT)}.  
Luczak and Noble~\cite{LuNo02} have shown that the DAPT  is $NP$-hard for every fixed $d\ge 2$. 
The question about the computational complexity of the DAPT in the case where the guest graph is a tree, posed by  Luczak and Noble  in \cite{LuNo02}, is still open.  
 In this perspective the development of heuristic approaches to efficiently find good solutions to DAPT  is   a natural task. 
There are plenty of heuristics for different versions of the GEP in the literature, especially for the linear arrangement problem, see  e.g.\ the papers by 
Petit~\cite{Pet98,Pet03} for nice and comprehensive reviews. However, to our knowledge there are no specific heuristic approaches to solve the DAPT 
and no benchmark instances have been developed for this problem yet. 
In this paper we make a first step in this direction and  propose construction and local search approaches as well as a lower bound for the DAPT, much in the spirit of 
\cite{Pet98,Pet03} which deal with the linear arrangement problem. 
In order to evaluate  the performance of the proposed heuristics  we generate a number of families  of test instances 
some of them being polynomially solvable or having a known optimal objective function value.

 The paper is organised as follows. 
Section~\ref{general:sec} discusses some  general properties of the problem  and introduces the notation used throughout the paper. 
In Section~\ref{lowbou:sec} we derive a lower bound for optimal objective function value to be used  in the evaluation of the performance of solution heuristics. 
Section~\ref{heurist:sec} introduces the proposed {\em heuristics}. Sections~\ref{TestInst:sec}, \ref{numeric:sec}\/ and \ref{sec:conclusionsAndOutlook} discuss the  test instances, the numerical results and some  conclusions and outlook, respectively.

\section{Notations and general properties of the DAPT} 
\label{general:sec}
Consider a guest graph $G=(V,E)$ with $n$ vertices, $|V|=n$, and a host graph $T$ which is a $d$-regular tree  of height $h$, $h:=\lceil\log_d{n}\rceil$. 
Let $B$ be the set of leaves of $T$. Notice that due to the above choice of $h$ we get the following upper bound for the number $b=|B|$ of leaves:
\begin{equation}\label{nrleaves:eq}
b:=|B|=d^h=d^ {h-1} d < nd \,  .
\end{equation}

\begin{definition} An {\em arrangement}\/ is an injective  mapping $\phi: V \to B$. 
The {\em  data arrangement problem on regular trees (DAPT)}\/ asks for  an {\em arrangement} $\phi$ that minimises the {\em objective value}  $OV(G, d, \phi)$
\begin{equation}
	\label{eq:definition}
	OV(G, d, \phi):=\sum_{(u, v) \in E}{d_T(\phi(u), \phi(v))}\, ,
\end{equation}
where $d_T(\phi(u), \phi(v))$ denotes the length of the $\phi(u)$-$\phi(v)$-path in the $d$-regular tree  $T$. Such an arrangement is called an {\em optimal arrangement}.  
An  instance of the DAPT is fully determined by the guest graph and the parameter $d$ of the regular tree $T$ which serves as host graph. 
Such an instance of the problem will be denoted by $DAPT(G,d)$.  
\end{definition}

Figure~\ref{fig:graph} shows a guest graph $G$ with vertices $\{v_1,v_2,v_3,v_4,v_5\}$ 
and Figure~\ref{fig:minimumArrangement} shows  a $3$-regular tree of height $2=\lceil\log_3{5}\rceil$  as a host graph
  together with a minimum arrangement. The numbers in the leaves of $T$ denote the vertex indices mapped to the  leaves, respectively,  
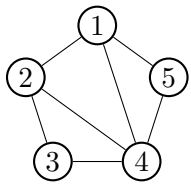
\begin{figure}[htbp]
	\begin{center}
		\begin{tikzpicture}[
				scale=1,
				node/.style={circle, draw=black!100, fill=white!0, thick, inner sep=0pt, minimum size=5mm}
			]
		
			\node[node] (node1) at (0.00, +1.00) {1};
			\node[node] (node2) at (-0.95, +0.31) {2};
			\node[node] (node3) at (-0.59, -0.81) {3};
			\node[node] (node4) at (+0.59, -0.81) {4};
			\node[node] (node5) at (+0.95, +0.31) {5};
	
			\draw [-] (node1) to (node2);
			\draw [-] (node2) to (node3);
			\draw [-] (node3) to (node4);
			\draw [-] (node4) to (node5);
			\draw [-] (node5) to (node1);

			\draw [-] (node1) to (node4);
			\draw [-] (node2) to (node4);
		\end{tikzpicture}
	\end{center}
	\caption{\em A guest graph.}
	\label{fig:graph}
\end{figure}
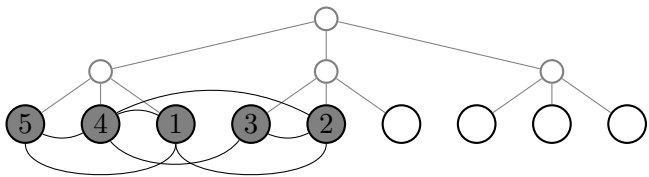
\begin{figure}[htbp]
	\begin{center}
		\begin{tikzpicture}[
				xscale=1, yscale=0.7,
				leafUsed/.style={circle, draw=black!100, fill=black!50, thick, inner sep=0pt, minimum size=5mm},
				leafUnused/.style={circle, draw=black!100, fill=white!100, thick, inner sep=0pt, minimum size=5mm},
				otherNode/.style={circle, draw=black!50, fill=white!100, thick, inner sep=0pt, minimum size=3mm}
			]
			\node[otherNode] (node0) at (4.00, 2.00) {};

			\node[otherNode] (node1) at (1.00, 1.00) {};
			\node[otherNode] (node2) at (4.00, 1.00) {};
			\node[otherNode] (node3) at (7.00, 1.00) {};

			\node[leafUsed] (node4) at (0.00, 0.00) {5};
			\node[leafUsed] (node5) at (1.00, 0.00) {4};
			\node[leafUsed] (node6) at (2.00, 0.00) {1};
			\node[leafUsed] (node7) at (3.00, 0.00) {3};
			\node[leafUsed] (node8) at (4.00, 0.00) {2};
			\node[leafUnused] (node9) at (5.00, 0.00) {};
			\node[leafUnused] (node10) at (6.00, 0.00) {};
			\node[leafUnused] (node11) at (7.00, 0.00) {};
			\node[leafUnused] (node12) at (8.00, 0.00) {};

			\draw [-, black!50] (node0) to (node1);
			\draw [-, black!50] (node0) to (node2);
			\draw [-, black!50] (node0) to (node3);

			\draw [-, black!50] (node1) to (node4);
			\draw [-, black!50] (node1) to (node5);
			\draw [-, black!50] (node1) to (node6);

			\draw [-, black!50] (node2) to (node7);
			\draw [-, black!50] (node2) to (node8);
			\draw [-, black!50] (node2) to (node9);
				
			\draw [-, black!50] (node3) to (node10);
			\draw [-, black!50] (node3) to (node11);
			\draw [-, black!50] (node3) to (node12);

			\draw [-] (node6) to [out=-90,in=-90] (node8);
			\draw [-] (node8) to [out=-150,in=-30] (node7);
			\draw [-] (node7) to [out=-120,in=-60] (node5);
			\draw [-] (node5) to [out=-150,in=-30] (node4);
			\draw [-] (node4) to [out=-90,in=-90] (node6);

			\draw [-] (node6) to [out=150,in=30] (node5);
			\draw [-] (node8) to [out=145,in=35] (node5);
		\end{tikzpicture}
	\end{center}
	\caption{\em An optimal  arrangement which objective value is $20$.}
	\label{fig:minimumArrangement}
\end{figure}

\noindent {\bf Notations.} From now on let the set of vertices of the guest  graph $G$ be given as  $V(G) = \{v_1, \dots v_n\}$ and let  $m:=|E|$ be its number of edges. 
We denote the {\em set of neighbours}\/ of any vertex $v$ by $\Gamma(v)$. We denote by $h(T)$ the height of a ($d$-)regular tree $T$.
A {\sl basic subtree} $T^\prime$ of the $d$-regular tree $T$ is a $d$-regular subtree of $T$ with $h(T^\prime)=h(T)-1$ rooted at some  son of the root of  $T$.   
For every $d, h\in \nz$, $2 \leq d \leq n$, the leaves of a $d$-regular tree  of height $h$  are denoted by $b_1,b_2,\ldots,b_{d^h}$ such that $b_{(i - 1) d^ {h-1}+1}, b_{(i - 1) d^ {h-1}+2},\ldots, b_{i \cdot d^ {h-1}}$ are the leaves of the $i$-th basic subtree. 
This order of the leaves is called {\sl the canonical order}.
If the leaves are labelled according to the canonical ordering then the pairwise distances between the leaves of a $d$-regular tree are given by a simple formula. 
\begin{observation}\label{distance:obse}
Let $T$ be  a $d$-regular tree of height $h$ and let its leaves be labelled according to the canonical ordering. 
The  distances between the leaves in $T$ are given as  $d_T(b_t,b_j)=2l$, where 
\[ l:=\min\left \{k\in \{1,2,\ldots,h\}\colon \left  \lfloor \frac{t-1}{d^k}\right \rfloor = \left  \lfloor \frac{j-1}{d^k}\right \rfloor \right \}\, ,\]
  for all leaves $b_t,b_j$ of $T$ with $t,j \in \{1,2,\ldots,d^h\}$.
\end{observation}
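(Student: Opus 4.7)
The plan is to reduce the statement to a standard tree-distance fact combined with a clean combinatorial description of the canonical labelling. Specifically, for any two leaves $b_t,b_j$ of $T$, let $a$ denote their lowest common ancestor. Since $T$ is $d$-regular of height $h$, the unique $b_t$-$b_j$ path in $T$ goes from $b_t$ up to $a$ and then down to $b_j$, and both of these segments have length equal to the distance from $a$ to the leaf level. Hence $d_T(b_t,b_j)=2\,\mathrm{height}(a)$, where $\mathrm{height}(a)$ is measured as the number of edges from $a$ down to any leaf in the subtree rooted at $a$. So the task reduces to showing that $\mathrm{height}(a)$ equals the number $l$ described in the observation.

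For that, I would prove by induction on $k\in\{0,1,\ldots,h\}$ the following auxiliary claim: in the canonical ordering, the leaves of any subtree of $T$ rooted at a node of height $k$ form a block of $d^k$ consecutive leaves of the form $b_{id^k+1},b_{id^k+2},\ldots,b_{(i+1)d^k}$ for some $i\in\{0,1,\ldots,d^{h-k}-1\}$. The base case $k=0$ is trivial, and the inductive step follows directly from the recursive definition of the canonical ordering: the leaves of the $i$-th basic subtree (which has height $h-1$) are precisely $b_{(i-1)d^{h-1}+1},\ldots,b_{id^{h-1}}$, and inside each basic subtree the labelling is itself canonical, so the claim propagates downward.

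The auxiliary claim implies that two leaves $b_t$ and $b_j$ lie in a common subtree rooted at a node of height $k$ if and only if the indices $t-1$ and $j-1$ belong to the same block $\{id^k,id^k+1,\ldots,(i+1)d^k-1\}$, which is exactly the condition $\lfloor(t-1)/d^k\rfloor=\lfloor(j-1)/d^k\rfloor$. The lowest common ancestor $a$ of $b_t$ and $b_j$ is by definition the node of \emph{smallest} height having both leaves in its subtree, so $\mathrm{height}(a)$ equals the smallest $k$ for which the floor equation holds, i.e.\ $\mathrm{height}(a)=l$. Combining this with $d_T(b_t,b_j)=2\,\mathrm{height}(a)$ yields the claimed formula.

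The only nontrivial step is the auxiliary claim about the block structure of the canonical ordering, and even that is essentially a bookkeeping induction made easy by the recursive nature of the definition; I do not expect a real obstacle. Edge cases such as $t=j$ can be handled by noting that the minimum in the definition is then attained at $k=1$ only vacuously — or, if preferred, the observation can be stated for $t\ne j$ with the convention $d_T(b_t,b_t)=0$.
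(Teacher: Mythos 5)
Your proof is correct, and it takes a genuinely different route from the paper's. The paper proves the observation by a single induction on the height $h$ of the whole tree: the base case $h=1$ gives all distances equal to $2$, and the inductive step splits into the case where $b_t,b_j$ lie in different basic subtrees (distance $2h$, minimum attained at $k=h$) and the case where they lie in the same basic subtree, where one recurses into that subtree after an index shift by $(r-1)d^{h-1}$ and checks that the floor condition is invariant under this shift. You instead factor the argument into two independent pieces: (i) the standard fact that in a tree whose leaves are all at depth $h$ the leaf-to-leaf distance is twice the height of the lowest common ancestor, and (ii) a block-structure lemma stating that the leaves below any node of height $k$ occupy exactly the indices $\{id^k+1,\dots,(i+1)d^k\}$, which translates "common ancestor at height $k$" into the floor condition. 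This decomposition is arguably more transparent, since the arithmetic with floors appears only in the purely combinatorial lemma (ii) and the distance computation is isolated in (i); the paper's monolithic induction has to carry both aspects through the inductive step simultaneously. Two small points: your induction on $k$ is most naturally run downward from the root (on the depth $h-k$), since the recursive definition of the canonical order refines blocks into sub-blocks, so the phrase ``base case $k=0$'' is slightly at odds with ``propagates downward''; and for $t=j$ the stated formula yields $l=1$, hence $2$ rather than $0$, so the observation should indeed be read as applying to distinct leaves --- a caveat the paper silently ignores but you correctly flag. You also implicitly use (and could state, as the paper does explicitly) that agreement of the floors at level $k$ implies agreement at level $k+1$, which is what makes the minimum coincide with the height of the lowest common ancestor.
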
 
\begin{proof}
First let us observe that for all $t,j\in \{1,2,\ldots,d^h\}$, $\lfloor \frac{t-1}{d^l}\rfloor = \lfloor \frac{j-1}{d^l}\rfloor$ implies $\lfloor \frac{t-1}{d^{l+1}}\rfloor = \lfloor \frac{j-1}{d^{l+1}}\rfloor$, for all $l\in \{1,2,\ldots,h-1\}$.

We prove the claim  by induction on $h$. If $h=1$ then $T$ has $d$ leaves labelled by $1,2,\ldots,d$,  their pairwise distances are all equal to  $2$
and $\lfloor\frac{t-1}{d}\rfloor = \lfloor\frac{j-1}{d}\rfloor=0$, so the claim holds. 
Assume that the claim holds for regular trees of height up to $h-1$. Consider now a tree of height $h$ with leaves labelled by $b_1,b_2,\ldots, b_{d^h}$  in the canonical ordering and let $b_t$, $b_j$ be two leaves of it. 
\smallskip

Let $t=(i_t-1)d^{h-1}+r_t$ and $j=(i_j-1)d^{h-1}+r_j$ with $i_t,i_j \in  \{1,2,\ldots,d\}$ and $r_t,r_j\in \{1,2,\ldots, d^{h-1}\}$.
Clearly $\lceil \frac{t-1}{d^{h-1}}\rceil=i_t-1$ and  $\lceil \frac{j-1}{d^{h-1}}\rceil=i_j-1$. Thus, 
if $l:=\min\{k\in \{1,2,\ldots,h\} \colon \lfloor \frac{t-1}{d^k}\rfloor =\lfloor\frac{j-1}{d^k}\rfloor \}=h$,  if and only if  $i_t\neq i_j$, or equivalently, 
$b_t$, $b_j$ are leaves  of  different basic subtrees of $T$. For leaves of different basic subtrees we have  $d_T(b_i,b_j)=2h$ and hence,  the claim holds in this case.  
\smallskip

Otherwise $l:=\min\{k\in \{1,2,\ldots,h\}\colon  \lfloor \frac{t-1}{d^k}\rfloor =\lfloor\frac{j-1}{d^k}\rfloor\}\le h-1$ which implies $\lfloor \frac{t-1}{d^{h-1}}\rfloor=\lfloor \frac{j-1}{d^{h-1}}\rfloor$. Thus  $b_t$ and $b_j$ 
are leaves  of  the same basic subtree of $T$. 
Let this be the $r$-th basic  subtree $T_r$ of $T$ with leaves $b_{(r-1)d^{h-1}+s}$ with $s=1,2,\ldots, d^{h-1}$. 
In the canonical ordering in $T_r$ these leaves would be labelled by $b_s$,  for $s=1,2, \ldots,d^{h-1}$. 
Let $t=(r-1)d^{h-1}+s_t$ and $j=(r-1)d^{h-1}+s_j$ for $s_t,s_j \in \{1,2, \ldots,d^{h-1}\}$.
$T_r$ is d-regular tree of height $h-1$ and hence $d_{T_r}(b_{s_t},b_{s_j})=2l$ holds,  
where   $l:=\min\{k\in \{1,2,\ldots,h-1\} \colon \lfloor \frac{s_t-1}{d^k}\rfloor = \lfloor\frac{s_j-1}{d^k}\rfloor \}$,  according to our inductive assumption. 
Finally notice that $d_{T_r}(b_{s_t},b_{s_j})=d_T(b_t,b_j)$  and 
\begin{eqnarray*}
 l:&=& \min\left \{k\in \{1,2,\ldots,h-1\} \colon \left \lfloor \frac{s_t-1}{d^k}\right \rfloor = \left \lfloor\frac{s_j-1}{d^k}\right  
\rfloor\right \}\\
&=& 
\min\left \{k\in \{1,2,\ldots,h-1\} \colon 
 \left \lfloor \frac{(r-1)d^{h-1}+s_t-1}{d^k}\right \rfloor = \left  \lfloor\frac{(r-1)d^{h-1}+s_j-1}{d^k}\right \rfloor \right \} 
\end{eqnarray*}
hold.  This completes the proof. 
\qed
\end{proof}
\smallskip 

\begin{definition}
For a given  arrangement $\phi$ let  $B_u = \{\phi(1), \dots, \phi(n)\}$ be called  the {\em set of used leaves}. 
If $B_u = \{b_i, \dots b_{i + n - 1}\}$ holds for some $1 \leq i \leq b - n + 1$,  $\phi$ is called  a {\em contiguous arrangement}. 
\end{definition}

Let us notice that not every instance of the DAPT possesses necessarily   a contiguous optimal  arrangement as illustrated by the following example.

\begin{example}\label{nocontig:opt}
A DAPT instance which does not possess  any contiguous optimal arrangement. 

The  guest graph $G$ with $12$ nodes  is represented in     
Figure~\ref{fig:noMinimumContinuousArrangementExistsGraph}. Consider  $d=4$. 
The optimal arrangement $\phi$ represented in 
Figure~\ref{fig:noMinimumContinuousArrangementExistsMinimumArrangement} is not contiguous. 
In both pictures  we identify the vertices with their indices, thus we write $i$ instead of $v_i$, $i=1,2,\ldots,12$ for simplicity. The optimal value $OV(G,4,\phi)$ equals  $28$ and can be written as $OV(G,4,\phi)=4*a(\phi)+2(m-a(\phi))$, where $m=11$ is the number of edges of the guest graph and 
$a(\phi)=3$ is the number of edges of $G$ with end-vertices mapped by $\phi$ into different basic subtrees of $T$. 

We show now that for every contiguous arrangement $\psi$,  $a(\psi)>3$ holds, implying that $OV(G,4,\psi)>OV(G,4,\phi)$.  
In order to see  that we make a case distinction according to the number of neighbours of vertex $v_1$ embedded together with $v_1$ 
in the same basis subtree. 
Assume this  number is $1$ and w.l.o.g.\ vertex $v_2$ is mapped together with $v_1$ to the leaves of the same basis subtree, say $T_1$.  
Then of course $v_4$, $v_7$ and $v_1$ are not  mapped by $\psi$ into leaves of $T_1$. So $a(\psi)\ge 3$. 
Moreover, due to the contiguity of $\psi$ for at  least one of the paths $\{v_4,v_5,v_6\}$, $\{v_7,v_8,v_9\}$, $\{v_{10},v_{11},v_{12}\}$ holds 
that not all of its vertices are mapped into the leaves of a common basic subtree. 
Due to that there is definitely one more edge (not incident to vertex $v_1$) whose end-vertices are mapped by $\psi$ into leaves of different 
basic subtrees, and hence $a(\psi)\ge 4$. 
The other cases where the number of neighbours of $v_1$ mapped together with $v_1$ into the leaves of the same basic subtree is $2$ or $3$ can be argued upon analogously\footnote{In fact we can show that the DAPT is polynomially solvable in the case that the guest graph is an 
{\sl extended star}\/ as in this example and for some suitable choices of $d$. In this case  the optimal arrangement has a particular structure and 
is in  general not contiguous. This and other polynomially solvable special cases of the DAPT are discussed in another paper we are working in.}.
\end{example}

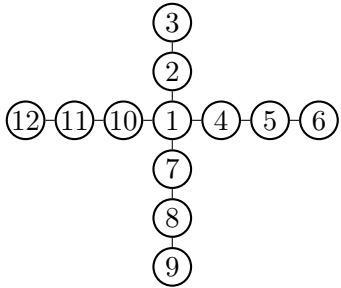
\begin{figure}[htbp]
	\begin{center}
		\begin{tikzpicture}[
				scale=1.3,
				node/.style={circle, draw=black!100, fill=white!0, thick, inner sep=0pt, minimum size=5mm}
			]
				
			\node[node] (node1) at (0.00, 0.00) {1};
			\node[node] (node2) at (0.00, 0.50) {2};
			\node[node] (node3) at (0.00, 1.00) {3};
			\node[node] (node4) at (0.50, 0.00) {4};
			\node[node] (node5) at (1.00, 0.00) {5};
			\node[node] (node6) at (1.50, 0.00) {6};
			\node[node] (node7) at (0.00, -0.50) {7};
			\node[node] (node8) at (0.00, -1.00) {8};
			\node[node] (node9) at (0.00, -1.50) {9};
			\node[node] (node10) at (-0.50, 0.00) {10};
			\node[node] (node11) at (-1.00, 0.00) {11};
			\node[node] (node12) at (-1.50, 0.00) {12};
			
			\draw [-] (node1) to (node2);
			\draw [-] (node2) to (node3);
			\draw [-] (node1) to (node4);
			\draw [-] (node4) to (node5);
			\draw [-] (node5) to (node6);
			\draw [-] (node1) to (node7);
			\draw [-] (node7) to (node8);
			\draw [-] (node8) to (node9);
			\draw [-] (node1) to (node10);
			\draw [-] (node10) to (node11);
			\draw [-] (node11) to (node12);
		\end{tikzpicture}
	\end{center}
	\caption{\em The guest graph of the DAPT instance in Example~\ref{nocontig:opt}.}
	\label{fig:noMinimumContinuousArrangementExistsGraph}
\end{figure}

\begin{figure}[htbp]
	\begin{center}
		\begin{tikzpicture}[
				xscale=1.1, yscale=0.7,
				leafUsed/.style={circle, draw=black!100, fill=black!50, thick, inner sep=0pt, minimum size=5mm},
				leafUnused/.style={circle, draw=black!100, fill=white!100, thick, inner sep=0pt, minimum size=5mm},
				otherNode/.style={circle, draw=black!50, fill=white!100, thick, inner sep=0pt, minimum size=3mm}
			]
			\node[otherNode] (node0) at (3.75, 2.00) {};

			\node[otherNode] (node1) at (0.75, 1.00) {};
			\node[otherNode] (node2) at (2.75, 1.00) {};
			\node[otherNode] (node3) at (4.75, 1.00) {};
			\node[otherNode] (node4) at (6.75, 1.00) {};

			\node[leafUsed] (node5) at (0.00, 0.00) {1};
			\node[leafUsed] (node6) at (0.50, 0.00) {2};
			\node[leafUsed] (node7) at (1.00, 0.00) {3};
			\node[leafUnused] (node8) at (1.50, 0.00) {};
			\node[leafUsed] (node9) at (2.00, 0.00) {4};
			\node[leafUsed] (node10) at (2.50, 0.00) {5};
			\node[leafUsed] (node11) at (3.00, 0.00) {6};
			\node[leafUnused] (node12) at (3.50, 0.00) {};
			\node[leafUsed] (node13) at (4.00, 0.00) {7};
			\node[leafUsed] (node14) at (4.50, 0.00) {8};
			\node[leafUsed] (node15) at (5.00, 0.00) {9};
			\node[leafUnused] (node16) at (5.50, 0.00) {};
			\node[leafUsed] (node17) at (6.00, 0.00) {10};
			\node[leafUsed] (node18) at (6.50, 0.00) {11};
			\node[leafUsed] (node19) at (7.00, 0.00) {12};
			\node[leafUnused] (node20) at (7.50, 0.00) {};

			\draw [-, black!50] (node0) to (node1);
			\draw [-, black!50] (node0) to (node2);
			\draw [-, black!50] (node0) to (node3);
			\draw [-, black!50] (node0) to (node4);

			\draw [-, black!50] (node1) to (node5);
			\draw [-, black!50] (node1) to (node6);
			\draw [-, black!50] (node1) to (node7);
			\draw [-, black!50] (node1) to (node8);

			\draw [-, black!50] (node2) to (node9);
			\draw [-, black!50] (node2) to (node10);
			\draw [-, black!50] (node2) to (node11);
			\draw [-, black!50] (node2) to (node12);

			\draw [-, black!50] (node3) to (node13);
			\draw [-, black!50] (node3) to (node14);
			\draw [-, black!50] (node3) to (node15);
			\draw [-, black!50] (node3) to (node16);

			\draw [-, black!50] (node4) to (node17);
			\draw [-, black!50] (node4) to (node18);
			\draw [-, black!50] (node4) to (node19);
			\draw [-, black!50] (node4) to (node20);

			\draw [-] (node5) to [out=-55,in=-125] (node6);
			\draw [-] (node6) to [out=-55,in=-125] (node7);
			\draw [-] (node5) to [out=-60,in=-120] (node9);
			\draw [-] (node9) to [out=-55,in=-125] (node10);
			\draw [-] (node10) to [out=-55,in=-125] (node11);
			\draw [-] (node5) to [out=-62,in=-118] (node13);
			\draw [-] (node13) to [out=-55,in=-125] (node14);
			\draw [-] (node14) to [out=-55,in=-125] (node15);
			\draw [-] (node5) to [out=-64,in=-116] (node17);
			\draw [-] (node17) to [out=-55,in=-125] (node18);
			\draw [-] (node18) to [out=-55,in=-125] (node19);
		\end{tikzpicture}
	\end{center}
	\caption{\em The non-contiguous optimal arrangement which objective value is $28$ for the DAPT instance in Example~\ref{nocontig:opt}.}
	\label{fig:noMinimumContinuousArrangementExistsMinimumArrangement}
\end{figure}
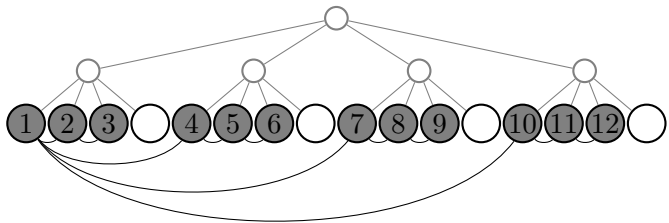

\section{A lower bound}\label{lowbou:sec}

 In a  $DAPT(G,d)$ with vertex set $V(G)$ of size $n$, $n:=|V(G)|$, we have $b:= d^h$ leaves,  where $h=\lceil\log_d{n}\rceil$ is the height of the regular tree. Thus there are $\frac{b!}{(b - n)!}$ possible arrangements and the complete enumeration becomes inefficient  even for very small instances. Further  let us notice that  $2 m \leq OV(G, d, \phi) \leq 2 h m$ holds for every {\em arrangement} $\phi$, where $m$ is the number of edges of  the guest graph $G$.   These bounds are due to the fact that the distance between any two leaves in a regular tree of height $h$ 
 is between $2$ and $2h$. 
\smallskip

Next we introduce the so-called degree lower bound for the DAPT which will be also used   to evaluate the performance of the heuristics 
introduced in this paper. We adapt an idea  used  by {\sc Petit} in~\cite{Pet98}  for the linear arrangement problem. The idea is the construction of locally optimal arrangements for every vertex $v$ of $G$, i.e.\ the construction of  an optimal arrangement of $v$ and its neighbours. Than the contribution of vertex $v$ to the objective function value of any feasible  solution cannot be larger than the objective function value of this locally optimal arrangement divided by $2$.

More precisely, for every $v\in V(G)$ we define a new graph $G'_v = (V'_v, E'_v)$ with the {\em vertex set} $V'_v := V$ and the {\em edge set} $E'_v = \left\{\{v, u\}\colon u \in \Gamma(v)\right \}$.  Thus $G'_v$ is a subgraph of $G$ containing all vertices of $G$ and just the edges incident to $v$.  Obviously, $G'_v$ is the union of a {\em star}\/ and some isolated vertices. An  {\em optimal  arrangement} $\phi_v$  for $DAPT(G^{\prime}_v,d)$ is obtained by  placing $v$  on some  {\em leaf}, say  $b_1$ w.l.o.g.\, 
 and the other neighbours on the {\em leaves} $b_2, \dots, b_{1 + |\Gamma(v)|}$ one by one, where the canonical order of the leaves is adopted. The other vertices of $G$ are  arranged arbitrarily on the remaining leaves $b_{2 + |\Gamma(v)|},\ldots, b_{d^ h}$. Let $OV_v$ denote the objective function value of the above mentioned arrangement for every $v \in V$. It is obvious that $DB(G,d)$ given as below is a lower bound for $DAPT(G,d)$, that is
\begin{equation}
	\label{eq:dg1}
	DB(G, d) = \frac{1}{2} \sum_{v \in V}{OV_v} \leq OV(G, d, \phi) \mbox{ for all arrangements $\phi$.}
\end{equation}
This bound $DG(G, d)$ is called the  {\sl degree bound}.

$DB(G,d)$ can be easily computed because $OV_v$ can be easily computed, given $d$ and the number $|\Gamma(v)|$ of neighbours, for all $v\in V(G)$. 
\begin{lemma}
	\label{lemma:dg}
	Let $G = (V, E)$ be a star graph with $n$ vertices and $2 \leq d \leq n$ a natural number. The optimal  value $OV$ of $DAPT(G,d)$ is given as 
	\begin{equation}
		\label{eq:dg2}
		OV = 2 \left(h \; n - \frac{d^h - 1}{d - 1}\right)\, ,  \mbox{where  $h = \lceil \log_d{n} \rceil$ is the height of the host $d$-regular tree.}
	\end{equation}
	
	\begin{proof}
Let $v:=v_1$ be the central vertex of $G$ with vertex set $\{v_1,v_2,\ldots,v_n\}$. 	It is clear that the optimal arrangement places 
the vertices $v_1,v_2,\ldots,v_n$ into the leaves $b_1$,$b_2$,\ldots, $b_n$ of the $d$-regular tree of height $h$, respectively, where  the leaves are given in the canonical order. 
Consider a partition of the set of leaves into  sets $B_j = \{\mbox{$b$ is a leaf } \colon  d_T(b_1, b)=2j\}$ with $j = 0, \dots, h$. 
It is clear that $B_0=\{b_1\}$, $B_1=\{b_2,\ldots,b_d\}$, and hence $|B_0|=1$, $|B_1|=d-1$.

Generally, for $j=0,1,\ldots,h$, a $d$-regular tree of height $h$ contains  $d^{h-j}$ $d$-regular subtrees of height $j$. Clearly one of these subtrees, say $T_1$ contains $b_1$. This subtree has in turn $d$ $d$-regular subtrees of height $j-1$ and (only) one of those contains $b_1$. 
 The set  $B_j$ consists exactly of the leaves of those $d$-regular subtrees of height $j-1$ of $T_1$ which do  not contain $b_1$. There are clearly $d - 1$ such subtrees with $d^{j-1}$ leaves each. Hence  $|B_j|=(d - 1) d^{j-1}$ for all $j=1,2,\ldots,h$.

Due to $h=\lceil \log_d n \rceil$ we have $d^{h-1} < n\le d^ h$   and hence the leaves of the  basic subtree which contains $b_1$ (and thus hosts  $v_1$) are all occupied.  Consequently  the other basic subtrees have  exactly $n-d^{h-1}>0$ occupied leaves.
Thus we get 
\[ OV=\sum_{j=1}^{h-1} 2j|B_{j}| + 2h(n-d^ {h-1}) =2 (d - 1) \sum_{j=1}^{h-1}jd^{j-1}+2h(n-d^{h-1}).\] 
 Using $\sum_{j = 1}^{h - 1}{d^{j - 1} j} = \frac{\big((d - 1) h - d\big)d^{h - 1} + 1}{\left(d - 1\right)^2}$ we get the lemma.
\qed
	\end{proof}
\end{lemma}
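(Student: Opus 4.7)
The plan is to exploit the fact that only the center $v_1$ has positive degree in a star, so the objective reduces to $\sum_{i=2}^n d_T(b_1,\phi(v_i))$ where, without loss of generality, $\phi(v_1)=b_1$. Hence the task is simply to pick the $n-1$ leaves of $T$ whose sum of distances to $b_1$ is minimal. The first move is to argue that the canonical ordering lists leaves in (weakly) increasing distance from $b_1$: for indices $t\in\{2,\dots,d^h\}$, Observation~\ref{distance:obse} tells us $d_T(b_1,b_t)=2l$ with $l=\min\{k\colon \lfloor (t-1)/d^k\rfloor = 0\}$, and this value is non-decreasing in $t$. Consequently an optimal arrangement places $v_1,\dots,v_n$ on $b_1,\dots,b_n$ respectively, and the greedy/canonical choice is justified.

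Next I would partition $\{b_2,\dots,b_{d^h}\}$ by distance to $b_1$, forming the layers $B_j=\{b\colon d_T(b_1,b)=2j\}$ for $j=1,\dots,h$. Using Observation~\ref{distance:obse} (or a direct induction on $h$), I would establish that $|B_j|=(d-1)d^{j-1}$: $B_j$ consists precisely of the leaves of the $d-1$ height-$(j-1)$ subtrees that sit inside the height-$j$ subtree containing $b_1$ but exclude the height-$(j-1)$ subtree containing $b_1$ itself. In particular, the total number of leaves at distance at most $2(h-1)$ from $b_1$ equals $1+\sum_{j=1}^{h-1}(d-1)d^{j-1}=d^{h-1}$, i.e.\ the leaves of the basic subtree $T_1$ that hosts $b_1$.

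Because $h=\lceil\log_d n\rceil$ gives $d^{h-1}<n\le d^h$, all $d^{h-1}$ leaves of $T_1$ are among the $n$ used leaves, while the remaining $n-d^{h-1}$ used leaves lie in other basic subtrees and are therefore at distance exactly $2h$ from $b_1$. This yields the decomposition
\[
OV \;=\; \sum_{j=1}^{h-1} 2j(d-1)d^{j-1} \;+\; 2h\bigl(n-d^{h-1}\bigr).
\]

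The final step is a routine algebraic simplification using the identity $\sum_{j=1}^{h-1} jd^{j-1}=\bigl(((d-1)h-d)d^{h-1}+1\bigr)/(d-1)^2$ (which follows from differentiating the geometric series). Substituting, the terms $2hd^{h-1}$ cancel and what remains collapses to $2\bigl(hn-(d^h-1)/(d-1)\bigr)$, as required. The only genuinely non-obvious point is the first one (justifying that canonical order minimises the sum of distances from a fixed leaf to $n-1$ others); everything afterwards is layer-counting plus the closed-form for $\sum jd^{j-1}$.
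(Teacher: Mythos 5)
Your proposal is correct and follows essentially the same route as the paper: reduce to the sum of distances from the centre's leaf $b_1$, place the remaining vertices on $b_2,\dots,b_n$ in canonical order, partition the leaves into layers $B_j$ with $|B_j|=(d-1)d^{j-1}$, observe that the basic subtree containing $b_1$ is completely filled while the other $n-d^{h-1}$ used leaves sit at distance $2h$, and simplify with the closed form for $\sum_{j=1}^{h-1}jd^{j-1}$. The only difference is that you explicitly justify via Observation~\ref{distance:obse} that the canonical order lists leaves in non-decreasing distance from $b_1$, a point the paper dismisses as clear; this is a small but welcome strengthening.
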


By applying Lemma~\ref{lemma:dg}   to evaluate $OV_v$ in  (\ref{eq:dg1}) as the optimal objective function value of the DAPT with  a guest graph being  star 
graph with $|\Gamma(v)+1|$ vertices  we get:
\begin{theorem}
	\label{theorem:dg}
	Let $G = (V, E)$ be a graph and $2 \leq d \leq n$ a degree of the arrangement tree. Then the degree bound is given as 
	\begin{equation}
		\label{eq:dg3}
		DB(G, d) = \sum_{v \in V} \left( p(v)(|\Gamma(v)| + 1) - \frac{d^{p(v)} - 1}{d - 1}\right)
	\end{equation}
	where
	\begin{equation}
		\label{eq:dg4}
		p(v):= \lceil \log_d{(|\Gamma(v)| + 1)} \rceil \, .
	\end{equation}
\end{theorem}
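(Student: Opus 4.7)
My plan is to derive the theorem by applying Lemma~\ref{lemma:dg} separately to each auxiliary graph $G'_v$ and summing via~(\ref{eq:dg1}). First I note that $G'_v$ consists of a single star (on $v$ and its $|\Gamma(v)|$ neighbours) together with $n-|\Gamma(v)|-1$ isolated vertices, which contribute nothing to the objective. Hence $OV_v$ equals the optimal objective value of a DAPT whose guest graph is a star on $|\Gamma(v)|+1$ vertices, but embedded into a host tree of height $h=\lceil\log_d n\rceil$ rather than of height $p(v)=\lceil\log_d(|\Gamma(v)|+1)\rceil$.

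The subtle point is therefore to verify that using the taller host tree does not change the optimal value. For this I reuse the shell decomposition introduced in the proof of Lemma~\ref{lemma:dg}: the number of leaves within distance $2j$ of any fixed leaf $b_1$ equals $1+(d-1)\sum_{i=1}^{j}d^{i-1}=d^{j}$. Because $|\Gamma(v)|+1\le d^{p(v)}$, all of $v$'s neighbours can be placed at leaves lying inside a single subtree of $T$ of height $p(v)$ rooted at an appropriate ancestor of $b_1$, and within this subtree the pairwise leaf distances agree with those in a standalone $d$-regular tree of height $p(v)$. The canonical arrangement which places $v$ at $b_1$ and its neighbours at $b_2,\dots,b_{|\Gamma(v)|+1}$ is optimal for $DAPT(G'_v,d)$, because it greedily fills the distance shells around $b_1$ in order of increasing distance. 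Lemma~\ref{lemma:dg}, applied with $n$ replaced by $|\Gamma(v)|+1$ and $h$ replaced by $p(v)$, therefore gives
\[
OV_v \;=\; 2\left(p(v)(|\Gamma(v)|+1) - \frac{d^{p(v)}-1}{d-1}\right).
\]

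Substituting this expression into~(\ref{eq:dg1}) and cancelling the factor $\tfrac12$ yields exactly the formula~(\ref{eq:dg3}) claimed for $DB(G,d)$. The only non-routine step is the justification that the optimal value on the taller tree $T$ matches the value furnished by Lemma~\ref{lemma:dg} on a tree of height $p(v)$; I expect this to be the main (though mild) obstacle, and the shell-counting argument above resolves it in one line.
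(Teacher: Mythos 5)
Your proposal follows the paper's own route exactly: the paper derives Theorem~\ref{theorem:dg} in one line by applying Lemma~\ref{lemma:dg} with $n$ replaced by $|\Gamma(v)|+1$ and $h$ by $p(v)$ to evaluate each $OV_v$ and then substituting into~(\ref{eq:dg1}). Your additional shell-counting justification that the star's optimal value is unchanged when embedded in the taller host tree of height $h=\lceil\log_d n\rceil$ (because the first $d^{p(v)}$ leaves in the canonical order form a height-$p(v)$ subtree with identical internal distances) is correct and fills in a detail the paper leaves implicit.
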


\section{Heuristic approaches  for  the DAPT}
\label{heurist:sec}
In this section we will  introduce some simple greedy heuristics, a construction heuristic and two local search heuristics for the DAPT. 

\subsection{Simple greedy  approaches}\label{sub:greedy}

A simple greedy strategy considers the leaves  of the guest graph in the canonical order. The first leaf is occupied by a vertex selected at random. 
Then we consider the next leaf in the canonical order, place at it  ``best possible vertex'',   and
repeat this process until all vertices of the guest graph have been placed to some leaf. 
``The best possible vertex'' means here  a vertex  which leads to the {\sl biggest increase}\/ in the objective function value  of the DAPT. We call this 
heuristic {\sc G2}. {\sc G2} is a leaf-driven heuristic. 
Clearly there are also  vertex-driven greedy algorithms which investigate the vertices in some prespecified order and place the current vertex 
to the ``best possible free leaf''. Since   the vertex-driven greedy heuristics we have tested were  outperformed by the leaf-driven greedy heuristic described above we do  not present them in details in this paper. 

 The time complexity of  {\sc G2} is $O(\max\{(m+n)n, n^2 \log{n}\})$. To see  this consider first a pre-processing step to compute the   distances between all pairs of leaves of the arrangement tree  in $O(n^2\log{n})$ time according to 
Observation~\ref{distance:obse}. Then 
$n$ iterations are performed to arrange  the vertices one at a time.
 The computation of  the increase in the objective function value resulting by placing a specific vertex $v$ onto the current leaf  takes 
 $O(|\Gamma(v)|)$ time per each vertex and hence $O(m)$ time for all candidate vertices. Selecting the best among all candidate  vertices  takes another $O(n)$ time.
Thus we obtain a time complexity of $O(n+m)$ per iteration which results to $O((n+m)n)$  for all iterations and to an  overall time complexity of $O(\max\{(m+n)n,n^2\log{n}\})$ (including the pre-processing step).
\smallskip

We have also tested two  very simple search heuristics {\sc BFSG} and {\sc DFSG} which order the vertices of the guest graph according to  
   breadth-first search  or    depth-first search, respectively, after starting at some prespecified vertex. 
Then the vertices are places onto the leaves in the canonical order, i.e.\ the $i$-the vertex according the resulting ordering is placed at the $i$-th leaf,  $i=1,2,\ldots,n$. 

Of course there are a number of  variants of this algorithm. 
We distinguish different implementations for connected and non-connected graphs. 
In  the case of a  connected guest graph $G$ there is a flexibility in choosing  
the starting vertex for  search  algorithm in $G$.
 Depending on the graph structure  the  vertex with the highest degree can be chosen.
 Or  the algorithm is run  for each vertex as starting vertex and then  the best obtained solution is chosen.

In the case of non-connected graphs  we have  to fix the order of the connected components before running the search algorithm for each of them. 
This can be done in many ways, e.g.\ by considering the connected components  in decreasing  order of magnitude.

Clearly,   the  worst-case time complexity depends on the particular implementation in each case. 
In the case of connected graphs we  obtain an $O(n^3)$ algorithm, if the ``best'' starting vertex among all is chosen. 
In the case of non-connected graphs we obtain the same time complexity,  if we choose the best starting vertex in each component 
by running the algorithm as many times as the number of vertices for each component.  

\subsection{A construction heuristic}\label{sub:constrDescr}
Let us now consider the objective function of the problem from another point of view.Let $a_i$, $1\le i\le h$, be the number of edges of the guest graph $G$ whose endpoints  are mapped into leaves of $T$ at a distance $2i$ in the host graph.  

We can state obviously
\begin{equation}
	\label{equation:ch}
	OV(G, d, \phi) = 2 h a_h + 2 (h - 1) a_{h - 1} + \dots + 2 a_1\, ,
\end{equation}
where $a_h + a_{h - 1} + \dots + a_1 = m$ and $m$ is the number of edges of the guest graph $G$. 

 Since our aim is to minimise the  objective value $OV(G, d, \phi)$, we  try first to minimise the coefficient $a_h$ by partitioning  the vertex set $V$ in at most $d$  subsets $V_i$, $1 \leq i \leq d$,  with  $0 \leq \left|V_i\right| \leq \frac{|B|}{d}$.  Then each $V_i$, $1\le i\le d$, is embedded into the leaves of the corresponding basic subtree, which means that the inequalities $(i - 1)d +1 \leq \phi(v) \leq i \; d$ hold for any $v\in V_i$, $1\le i\le d$. Among all  arrangements of this kind we choose one which minimises 
$a_h=\left|\left\{(u, v) \in E | u \in V_i, v \in V_j, i \neq j\right\}\right|$. 
 Then the subproblems $DAPT(G[V_i],d)$, $1\le i\le d$,  (where $G[V_i]$ is the subgraph of $G$ induced by the set of vertices $V_i$) are solved 
in order to determine an arrangement of $V_i$, $1\le i\le d$, into the leaves of the corresponding basic subtree.   
\smallskip

The problem of partitioning  $V$ as described above is strongly related to  the so called 
{\sl  minimum cut  problem with bounded set size (MCBSSP)}\/
described in next subsection. In Subsection~\ref{subsub:heuristic} we present  an approach to solve the $DAPT(G,d)$ by using the idea described 
above and  a heuristic for MCBSSP. 
\subsubsection{A related problem (MCBSSP) and some heuristic approaches }\label{subsub:MCBSSP}
\begin{quote}
{\bf The Minimum Cut  Problem with Bounded Set Size (MCBSSP)}\\
Input: A graph $G=(V,E)$ with $n=|V|$ and two integers $l$, $u$ with $0 < l \le u < n$.\\
Output: A set $X\subset V$ with $l\le |X|\le u$ such that the cut\\ $\delta(X):= \left\{(u, v) \in E | u \in X, v \notin X \right\}$ has minimum cardinality.
\end{quote}

MCBSSP is equivalent to the  so-called  {\sl $(k,n-k)$ cut problem ($k$-$(n-k)$CP)}, investigated by  Feige, Krauthgamer and Nissim~\cite{Feige01}.

\begin{quote}
{\bf The $(k,n-k)$ cut problem ($k$-$(n-k)$CP)}\\
Input: A graph $G=(V,E)$ with $n=|V|$ and an  integer $k$, with $k < n$.\\
Output: A partition of $V$ in  $X$, $Y$ with $ |X|= k$,  $|Y|=n- k$ such that the cut\\ $\delta(X):= \left\{(u, v) \in E | u \in X, v \in Y \right\}$ has minimum cardinality.
\end{quote}  

\noindent Indeed  the equivalence between MCBSSP and $k$-$(n-k)$CP  is trivial: an optimal solution of    MCBSSP in a graph $G$ with input parameters $l,u$  can be obtained by solving $O(n)$ instances of  $k$-$(n-k)$CP  in the same graph $G$ with input parameter $k= u, u+1, \ldots, l$. 
 On the other hand $k$-$(n-k)$CP is just a special case of MCBSSP, when $u=l$ holds. $k$-$(n-k)$CP  is NP-hard  for general $k$ 
as mentioned in Feige et al.~\cite{Feige01}, a special case of it is the {\sl minimum bisection problem}, 
see Garey and Johnson~\cite{Garey1979}.  Thus MCBSSP is also NP-hard for general $l$ and $u$  and there is no hope to optimally solve it in polynomial time (unless $P=NP$). 
\smallskip

We have considered two heuristic approaches  to solve MCBSSP. 
These will  then be applied recursively to obtain a heuristic for the $DAPT(G,d)$ as described above.  

The  first approach is based  on 
a polynomial time approximation algorithm for $k$-$(n-k)$CP with an approximation ratio $O(\log^2n)$
 proposed by  Feige, Krauthgamer and Nissim~\cite{Feige01}.
 (Their algorithm reaches an even better approximation rate for the cases  $k=O(\log n)$
and  $k=\Omega(\log n)$). 
 So in order to obtain a solution of   MCBSSP  in the graph $G$ with parameters $l$ and $u$ we apply the approach of  
Feige et al~\cite{Feige01} to  $k$-$(n-k)$CP in $G$ with  parameter $k$ varying between $l$  and $ u$ and then choose a minimum  cut  among the 
$l-u+1$ obtained solutions  of  $k$-$(n-k)$CP. Since $u-l\le n$ we get a polynomial time approach  for MCBSSP. 
\smallskip

Our second approach for   MCBSSP  makes use of  a simple {\em local search idea}. 
Assume that $l = u$. 
We randomly  partition $V$ in $X$ and $V\setminus X$, where $\emptyset \subset X \subset V$ and $|X| = l = u$. 
We try to decrease the  cardinality of the cut $|\delta(X)|$ by the following pair-exchange approach. 
Consider an other {\em cut} $\delta((X \setminus \{u\}) \cup \{v\})$ for each pair $(u, v)$, where $u \in X$ and $v \notin X$. 
Replace $X$ by $(X \setminus \{u\}) \cup \{v\}$ if $\delta((X \setminus \{u\}) \cup \{v\})<\delta(X)$ and  repeat this step  
until no further  improvement of the cardinality of the cut  is possible.
Then apply the above approach to determine a cut $\delta(X^{(k)})$ with  $|X^{(k)}|=k$ for any  $l\le k\le u$ and choose the best among 
the cuts $\delta(X^ {(k)})$, $l\le k\le u$. 
\subsubsection{A  heuristic for DAPT(G,d)}\label{subsub:heuristic}
Having described the heuristics for  MCBSSP let us turn back to  the $DAPT(G,d)$. 
The approach is presented in the form of a pseudo code in Algorithm~\ref{algorithm:ch} and  involves 
the heuristic solution of the MCBSSP as a subroutine (see pseudocode line 11). 
\smallskip

We first  consider  the question of determining the ``unused leaves'', i.e.\ leaves of the arrangement tree, into which no nodes of the guest graph are  arranged. 
Based on our  observations in the context of numerical tests we try to use as few  basic subtrees as possible to arrange all nodes of the guest graph.
Thus we collect the unused $b-n$ leaves (recall that $b:=|B|$ is the number of leaves of the host $d$-regular tree)     into as few basic subtrees  as possible. 
By considering that each basic subtree has  $b_1:=\frac{b}{d}$ we 
mark the  first $l_{uu} = \Big\lfloor\frac{b-n}{b_1}\Big\rfloor b_1$ leaves, or equivalently the first $\Big\lfloor\frac{b-n}{b_1}\Big\rfloor$ basic subtrees  
as {\em unused}\/ (see pseudocode lines~\ref{algorithm:costructionHeuristic:unusedLeavesBegin} -- \ref{algorithm:costructionHeuristic:unusedLeavesEnd}). 
Then we separate the vertices $X$ which will be placed on the leaves $b_{l_{uu} + 1}, \dots b_{l_{uu} + \frac{b}{d}}$, i.e.\ on the leaves of the first 
used basic subtree,  by solving  MCBSSP with the  parameters $l:=b_1-(b-n)\mod b_1$ and $u:=b_1$ (see pseudocode 
line~\ref{algorithm:costructionHeuristic:finAMinimumCardinalityCut}). This can be done by applying one of the heuristics described in 
Subsection~\ref{subsub:MCBSSP}. 
We repeat then this procedure $\Big\lceil \frac{n}{b_1} \Big\rceil - 1$ times to  obtain $\Big\lceil \frac{n}{b_1} \Big\rceil$ subproblems 
which are  solved recursively (pseudocode line~\ref{algorithm:costructionHeuristic:recursion}). 
The recursion calls will terminate when the height of the {\em arrangement tree}\/ becomes $1$; there an {\em arrangement} $\phi$ is selected at random.
\smallskip

\begin{algorithm}[htb]
	\caption{Construction heuristic.}
	\label{algorithm:ch}
	\begin{algorithmic}[1]
		\REQUIRE $G = (V, E)$ undirected graph and positive integer $d \in \mathbb{N}$ where $2 \leq d \leq n$; let be $|V| = n$ and $T$ the $d$-regular arrangement tree with the set of leaves $B$
		\ENSURE arrangement $\phi: V \to B$
		\STATE $h := \lceil \log_d{n} \rceil$ and $b := h^d$;
		\IF{$h = 1$}
			\STATE make the arrangement $\phi$ at random;
		\ELSE
			\STATE $l_{uu} := b - n$;
			\FOR{$i := 1$ \TO $d$}
				\IF{$l_{uu} \geq \frac{b}{d}$}
				\label{algorithm:costructionHeuristic:unusedLeavesBegin}
					\STATE $\phi^{-1}(l) := unused$, $(i - 1) \frac{b}{d} \leq l \leq i \frac{b}{d}$;
					\STATE $b_{uu} := b_{uu} - \frac{b}{d}$;
					\label{algorithm:costructionHeuristic:unusedLeavesEnd}
				\ELSE
					\STATE find a minimum cardinality cut $X \subset V(G)$ in graph $G$ subject to $\frac{b}{d} - b_{uu} \leq |X| \leq \frac{b}{d}$ by solving  MCBSSP with parameters $l:=\frac{b}{d} - l_{uu}$ and $u:=\frac{b}{d}$;
					\label{algorithm:costructionHeuristic:finAMinimumCardinalityCut}
					\STATE solve the problem for the graph $G[X]$ and a $d$-regular arrangement tree $T_X$ which height is $h - 1$ recursively; let $\phi_X$ be the solution of this recursive problem;
					\label{algorithm:costructionHeuristic:recursion}
					\STATE compute the inverse function of $\phi_X$ which we denote $\phi^{-1}_X$;
					\FOR{$j := 1$ \TO $\frac{b}{d}$}
						\STATE $\phi^{-1}((i - 1) \frac{b}{d} + j) := \phi^{-1}_X(j)$;
					\ENDFOR
					\STATE $G := G[G \backslash X]$;
					\STATE $l_{uu} := l_{uu} - (\frac{b}{d} - |X|)$;
				\ENDIF
			\ENDFOR
			\STATE compute the function $\phi$ from the function $\phi^{-1}$;
		\ENDIF
	\end{algorithmic}
\end{algorithm}
\smallskip

Now let us consider the  worst-case time complexity of the described approach. 
Let $f_{C}(n)$ denote the worst-case time complexity of the subroutine which solves  MCBSSP for a graph with $n$ vertices and any parameters $0 < l \leq u < n$. 
Since $n \leq b$ holds for all instances, the {\em worst-case time complexity}\/ of the whole algorithm is
\begin{equation}
	\label{equation:chTimeComplexity1}
	\begin{split}
		1			& \left(f_C\left(\frac{b}{d} d\right) + f_C\left(\frac{b}{d} (d - 1)\right) + \dots + f_C\left(\frac{b}{d} 2\right)\right) +\\
		d			& \left(f_C\left(\frac{\frac{b}{d}}{d} d\right) + f_C\left(\frac{\frac{b}{d}}{d} (d - 1)\right) + \dots + f_C\left(\frac{\frac{b}{d}}{d} 2\right)\right) +\\
					& \dots\\
		d^{h -2}	& \left(f_C\left(\frac{\frac{b}{d^{h - 2}}}{d} d\right) + f_C\left(\frac{\frac{b}{d^{h - 2}}}{d} (d - 1)\right) + \dots + f_C\left(\frac{\frac{b}{d^{h - 2}}}{d} 2\right)\right)\, ,\\
	\end{split}
\end{equation}
where the lines correspond to the {\em recursion depth}. Summarising we get the following worst case time complexity 
\begin{equation}
	\label{equation:chTimeComplexity2}
	\sum_{i = 0}^{h - 2}{d^i \sum_{j = 0}^{d - 2}{f_C\left(\frac{b}{d^{i + 1}} (d - j)\right)}}.
\end{equation}

For some particular heuristic to solve the  MCBSSP  we can substitute $f_C(n)$ by a  precise expression in (\ref{equation:chTimeComplexity2}).
Consider  the case of the   {\em local search}\/ based heuristic described in Subsection~\ref{subsub:MCBSSP}. 
When computing the cuts at   the first  recursion level  $u - l \leq \frac{b}{d}$ obviously holds.
If $X$ is the set of vertices generating the cut, then  $|X|\le \frac{b}{d}$ holds.  
When computing the $k$-th cut at the first recursion level  we have at most $\frac{b}{d} \frac{(d - k) b}{d}$ vertex pairs 
which could be exchanged and the cardinality of the cut after the pair-exchange can be computed in 
$O(\frac{b}{d}+(d-k)\frac{b}{d})=O(\frac{b}{d}(d-k+1))$ time. 
So  we get a  worst-case time complexity of 
$O\left(\frac{b}{d}\left(\frac{b}{d} \frac{(d - k) b}{d}\right) (\frac{b}{d}+\frac{(d-k)b}{d})\right)=O\left (\frac{b}{d}\left(\frac{b}{d} \frac{(d - k) b}{d}\right) \frac{b}{d}(d-k+1)\right )=O(\left (\frac{b}{d}\right )^4(d-k)(d-k+1)$ for the $k$-th cut in the first level 
(where the first factor in the above expression accounts for the number of $k$-$(n-k)$CP to be solved which is at most $u-l\le \frac{b}{d}$). 
Summarising for all cuts of the first level  we get 

\begin{equation}
	\label{equation:chTimeComplexity3}
O\left(\left(\frac{b}{d}\right)^4 \sum_{k=1}^{d-1} \left ((d - k)(d-k+1)\right ) 
\right )= O\left(\left(\frac{b}{d}\right)^4 \left (\sum_{i=1}^{d -1}i^2+\sum_{i=1}^{d -1} i\right )\right)=
O\left(\frac{b^4}{d}\right)\, .
\end{equation}

 Now let us  consider the recursion. After building the first $d - 1$ cuts we get $d$ subproblems each of them having most $\frac{b}{d}$ vertices. 
Thus  for the whole algorithm we get a time complexity $K$ with  
\begin{equation}
	\label{equation:chTimeComplexity7}
	K:=O\left(\frac{b^4}{d} + d \frac{\left(\frac{b}{d}\right)^4}{d} + d^2 \frac{\left(\frac{b}{d^2}\right)^4}{d} + \dots + d^{h - 2} \frac{\left(\frac{b}{d^{h - 2}}\right)^4}{d} + n\right).	
\end{equation}
Note that if the height of the {\em arrangement tree}\/ is $1$, the arrangement $\phi$ can be made at random and thus the {\em recursion depth}\/ is only $h - 2$. Using $d^h \frac{\left(\frac{b}{d^h}\right)^4}{d} = b \frac{\left(\frac{b}{b}\right)^4}{d} = \frac{b}{d}$, $d^{h - 1} \frac{\left(\frac{b}{d^{h - 1}}\right)^4}{d} = \frac{b}{d} \frac{\left(\frac{b}{\frac{b}{d}}\right)^4}{d} = b d^2$ and considering $b < n d$  we get
\begin{equation}
	\label{equation:chTimeComplexity8}
	K=O\left(\frac{b^4}{d}\sum_{i=0}^h \left (\frac{1}{d^3}\right )^i - b d^2 - \frac{b}{d} + n \right) = O(n^4 d).
\end{equation}

Now, we can state the following theorem.
\begin{theorem}
	\label{theorem:chTimeComplexity}
	The Algorithm~\ref{algorithm:ch} can be implemented with a worst case time complexity of  $O(n^4 d)$,   
if  the local search approach of Subsection~\ref{subsub:MCBSSP} is applied to  solve MCBSSP.
\end{theorem}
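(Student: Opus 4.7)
The plan is to substitute the runtime of the local search heuristic for MCBSSP into the general expression~(\ref{equation:chTimeComplexity2}) for Algorithm~\ref{algorithm:ch} and then evaluate the resulting double sum.

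First I would bound $f_C(N)$, the worst-case time to solve one MCBSSP instance on a graph with $N$ vertices via the local search of Subsection~\ref{subsub:MCBSSP}. For a fixed cardinality $k$, the pair-exchange procedure starts from a random bipartition of sizes $(k,N-k)$, examines $O(k(N-k))$ candidate swaps per round, and each swap can be evaluated in $O(N)$ time by updating only the contributions of the two swapped vertices. Since each improving swap strictly decreases the cut value, the number of improvement rounds is polynomially bounded. Iterating over all admissible cardinalities $k\in\{l,\ldots,u\}$ (at most $u-l+1\le\frac{b}{d}+1$ in the recursive context) and performing the accounting that leads to~(\ref{equation:chTimeComplexity3}) shows that the $d-1$ cuts required at the outermost recursion level can be constructed in $O(b^4/d)$ total time.

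Next I would plug this bound into the recursion tree. At depth $i$, Algorithm~\ref{algorithm:ch} handles $d^i$ subproblems whose host tree has $b/d^i$ leaves, so by the argument above the separation into $d$ groups at any fixed subproblem costs $O((b/d^i)^4/d)$; multiplying by the number $d^i$ of subproblems at that depth and summing over $i=0,\ldots,h-2$ reproduces the expression for $K$ in~(\ref{equation:chTimeComplexity7}). Factoring $b^4/d$ out of the sum gives a geometric series in $1/d^3$, so the total is dominated by the top term $b^4/d$. Finally, invoking the bound $b<nd$ from~(\ref{nrleaves:eq}) converts $b^4/d$ into $O(n^4d)$, as required.

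The main obstacle is the careful bookkeeping behind $f_C$: one has to argue that each pair-exchange can be scored in $O(N)$ time (by maintaining an incremental cut value rather than recomputing it from scratch) and that the total number of improving rounds over all $u-l+1$ starting cardinalities remains polynomial, so the estimate in~(\ref{equation:chTimeComplexity3}) is justified. Once this implementation-level analysis is settled, the remainder is a routine summation of a geometric series whose dominant term gives the stated bound.
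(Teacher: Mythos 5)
Your overall route is exactly the paper's: bound the cost of producing the $d-1$ cuts at the top level by $O(b^4/d)$ as in~(\ref{equation:chTimeComplexity3}), observe that a subproblem on $b/d^i$ leaves costs $O\bigl((b/d^i)^4/d\bigr)$ and that there are $d^i$ of them at depth $i$, sum the resulting geometric series in $1/d^3$ as in~(\ref{equation:chTimeComplexity7})--(\ref{equation:chTimeComplexity8}), and convert $b^4/d$ into $n^4d$ via $b<nd$ from~(\ref{nrleaves:eq}). That part of your argument is a faithful reproduction and needs no further comment.

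The one point where your write-up does not actually close is the justification of~(\ref{equation:chTimeComplexity3}) itself. You argue that each improving swap strictly decreases the cut value, conclude that the number of improvement rounds is ``polynomially bounded'', and then assert that this justifies the $O(b^4/d)$ estimate. That inference is a non sequitur: a polynomial number of rounds would multiply the per-round cost (number of candidate pairs times evaluation time per pair) by an extra polynomial factor, which would overshoot $O(b^4/d)$ and ultimately $O(n^4d)$. The paper's own accounting for the $k$-th cut is the product of three factors only --- the number of cardinalities $u-l+1\le b/d$, the number of candidate pairs $\tfrac{b}{d}\cdot\tfrac{(d-k)b}{d}$, and the $O\bigl(\tfrac{b}{d}(d-k+1)\bigr)$ evaluation time per pair --- with no multiplier for repeated sweeps until convergence. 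So to land on~(\ref{equation:chTimeComplexity3}) you must either bound the number of sweeps by a constant (e.g.\ analyse a single-pass implementation) or restructure the accounting so that the iteration count is absorbed; merely observing that it is polynomial does not do the job. Apart from this bookkeeping issue, which you correctly identify as the crux but do not resolve, the proof matches the paper's.
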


In fact the quality of this construction heuristic  depends significantly on the quality of the heuristic used to solve  MCBSSP. 
However, even if we were  able to solve MCBSSP to optimality, the construction  heuristic would  not necessarily compute  an optimal arrangement. 
As an example  consider $DAPT(G,2)$ with guest  graph $G$ as shown in   Figure~\ref{fig:chGraph}. 
Figure~\ref{fig:chNotAMinimumArrangement} shows an  arrangement obtained by the construction heuristic, 
where MCBSSP was always solved to optimality during the algorithm. 
This arrangement is not optimal; a strictly better arrangement is shown in Figure~\ref{fig:chAMinimumArrangement} (this is actually an optimal arrangement). 
The reason for this behaviour relies on the fact that  minimising the coefficients $a_i$, $i=1,2,\ldots,h$, 
 starting with $a_h$ and proceeding in the above order, does not necessarily lead to a minimum  value of $OV(G,d,\phi)$, see  (\ref{equation:ch}). 

In our computational experiment we observed that the construction heuristic which involved the pair-exchange approach to solve MCBSSP outperforms 
the heuristic which involves the approach of Feige et al.~\cite{Feige01}. 
Therefore in Section~\ref{numeric:sec} we just report on the performance of the more successful algorithm denoted by  CHLS, see also Section~\ref{sub:construct}.
\begin{figure}[htbp]
	\begin{center}
		\begin{tikzpicture}[
				scale=1,
				node/.style={circle, draw=black!100, fill=white!0, thick, inner sep=0pt, minimum size=5mm}
			]
		
			\node[node] (node1) at (0.00, 1.00) {1};
			\node[node] (node2) at (1.00, 2.00) {2};
			\node[node] (node3) at (1.00, 1.00) {3};
			\node[node] (node4) at (1.00, 0.00) {4};
			\node[node] (node5) at (2.00, 0.00) {5};
			\node[node] (node6) at (3.00, 1.00) {6};
			\node[node] (node7) at (3.00, 0.00) {7};
			
			\draw [-] (node1) to (node2);
			\draw [-] (node1) to (node3);
			\draw [-] (node1) to (node4);
			\draw [-] (node4) to (node5);
			\draw [-] (node5) to (node6);
			\draw [-] (node5) to (node7);
			\draw [-] (node6) to (node7);
		\end{tikzpicture}
	\end{center}
	\caption{\em Graph.}
	\label{fig:chGraph}
\end{figure}

\begin{figure}[htbp]
	\begin{center}
		\begin{tikzpicture}[
				xscale=1.5, yscale=0.7,
				leafUsed/.style={circle, draw=black!100, fill=black!50, thick, inner sep=0pt, minimum size=5mm},
				leafUnused/.style={circle, draw=black!100, fill=white!100, thick, inner sep=0pt, minimum size=5mm},
				otherNode/.style={circle, draw=black!50, fill=white!100, thick, inner sep=0pt, minimum size=3mm}
			]
			\node[otherNode] (node0) at (1.75, 3.00) {};
	
			\node[otherNode] (node1) at (0.75, 2.00) {};
			\node[otherNode] (node2) at (2.75, 2.00) {};
	
			\node[otherNode] (node3) at (0.25, 1.00) {};
			\node[otherNode] (node4) at (1.25, 1.00) {};
			\node[otherNode] (node5) at (2.25, 1.00) {};
			\node[otherNode] (node6) at (3.25, 1.00) {};
	
			\node[leafUsed] (node7) at (0.00, 0.00) {1};
			\node[leafUsed] (node8) at (0.50, 0.00) {2};
			\node[leafUsed] (node9) at (1.00, 0.00) {3};
			\node[leafUsed] (node10) at (1.50, 0.00) {4};
			\node[leafUsed] (node11) at (2.00, 0.00) {5};
			\node[leafUsed] (node12) at (2.50, 0.00) {6};
			\node[leafUsed] (node13) at (3.00, 0.00) {7};
			\node[leafUnused] (node14) at (3.50, 0.00) {};

			\draw [-, black!50] (node0) to (node1);
			\draw [-, black!50] (node0) to (node2);
	
			\draw [-, black!50] (node1) to (node3);
			\draw [-, black!50] (node1) to (node4);
			\draw [-, black!50] (node2) to (node5);
			\draw [-, black!50] (node2) to (node6);
	
			\draw [-, black!50] (node3) to (node7);
			\draw [-, black!50] (node3) to (node8);
			\draw [-, black!50] (node4) to (node9);
			\draw [-, black!50] (node4) to (node10);
			\draw [-, black!50] (node5) to (node11);
			\draw [-, black!50] (node5) to (node12);
			\draw [-, black!50] (node6) to (node13);
			\draw [-, black!50] (node6) to (node14);
	
			\draw [-] (node7) to (node8);
			\draw [-] (node7) to [out=-60,in=-120] (node9);
			\draw [-] (node7) to [out=-60,in=-120] (node10);
			\draw [-] (node10) to (node11);
			\draw [-] (node11) to (node12);
			\draw [-] (node11) to [out=-60,in=-120] (node13);
			\draw [-] (node12) to (node13);
		\end{tikzpicture}
	\end{center}
	\caption{\em A non-optimal  arrangement $\phi$ with $OV(G,2,\phi)=26$ for $G$ in Figure~\ref{fig:chGraph}.}
	\label{fig:chNotAMinimumArrangement}
\end{figure}

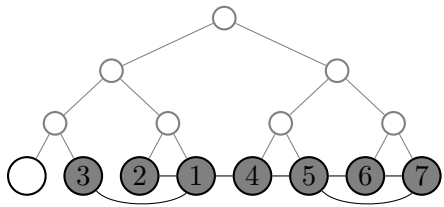
\begin{figure}[htbp]
	\begin{center}
		\begin{tikzpicture}[
				xscale=1.5, yscale=0.7,
				leafUsed/.style={circle, draw=black!100, fill=black!50, thick, inner sep=0pt, minimum size=5mm},
				leafUnused/.style={circle, draw=black!100, fill=white!100, thick, inner sep=0pt, minimum size=5mm},
				otherNode/.style={circle, draw=black!50, fill=white!100, thick, inner sep=0pt, minimum size=3mm}
			]
			\node[otherNode] (node0) at (1.75, 3.00) {};

			\node[otherNode] (node1) at (0.75, 2.00) {};
			\node[otherNode] (node2) at (2.75, 2.00) {};

			\node[otherNode] (node3) at (0.25, 1.00) {};
			\node[otherNode] (node4) at (1.25, 1.00) {};
			\node[otherNode] (node5) at (2.25, 1.00) {};
			\node[otherNode] (node6) at (3.25, 1.00) {};

			\node[leafUnused] (node7) at (0.00, 0.00) {};
			\node[leafUsed] (node8) at (0.50, 0.00) {3};
			\node[leafUsed] (node9) at (1.00, 0.00) {2};
			\node[leafUsed] (node10) at (1.50, 0.00) {1};
			\node[leafUsed] (node11) at (2.00, 0.00) {4};
			\node[leafUsed] (node12) at (2.50, 0.00) {5};
			\node[leafUsed] (node13) at (3.00, 0.00) {6};
			\node[leafUsed] (node14) at (3.50, 0.00) {7};

			\draw [-, black!50] (node0) to (node1);
			\draw [-, black!50] (node0) to (node2);

			\draw [-, black!50] (node1) to (node3);
			\draw [-, black!50] (node1) to (node4);
			\draw [-, black!50] (node2) to (node5);
			\draw [-, black!50] (node2) to (node6);

			\draw [-, black!50] (node3) to (node7);
			\draw [-, black!50] (node3) to (node8);
			\draw [-, black!50] (node4) to (node9);
			\draw [-, black!50] (node4) to (node10);
			\draw [-, black!50] (node5) to (node11);
			\draw [-, black!50] (node5) to (node12);
			\draw [-, black!50] (node6) to (node13);
			\draw [-, black!50] (node6) to (node14);

			\draw [-] (node8) to [out=-60,in=-120] (node10);
			\draw [-] (node9) to (node10);
			\draw [-] (node10) to (node11);
			\draw [-] (node11) to (node12);
			\draw [-] (node12) to (node13);
			\draw [-] (node12) to [out=-60,in=-120] (node14);
			\draw [-] (node13) to (node14);
		\end{tikzpicture}
	\end{center}
	\caption{\em An optimal arrangement $\phi$ with $OV(G,2,\phi)=24$ for $G$ in Figure~\ref{fig:chGraph}.}
	\label{fig:chAMinimumArrangement}
\end{figure}

\subsection{Local search approaches}
In this paragraph we propose two different local search heuristics for the DAPT. 
They can be used separately or also combined as described below. 
	 
\subsubsection{The pair-exchange heuristic}\label{subsub:PE}
 The algorithm starts with an arbitrary arrangement $\phi$ (it can be a random arrangement or an arrangement obtained by applying some other heuristic)
 and  tries to improve the objective function value by performing so-called pair-exchanges. 
More precisely the algorithm fixes an ordering of  the pairs of vertices $(v_i,v_j) \in V(G)\times V(G)$ with $v_i\neq v_j$ and checks whether  
a pair $(v_i,v_j)$ exists such 
that $OV(G,d,\phi^{\prime})< OV(G,d,\phi)$,  where $\phi^{\prime}$ is obtained from  $\phi$ by applying a pair-exchange:
	\begin{equation}\label{pairexch:equ}
\phi^{\prime}(v_k)=\left \{ \begin{array}{ll}
		\phi(v_j)		& \mbox{if } k=i\\
		\phi(v_i)		& \mbox{if } k=j\\
		\phi(v_k)		& \mbox{if } k \not\in \{i, j\}\end{array}\right. \mbox{ for } k\in \{1, \dots, n\} \, .
\end{equation}
If such a pair $(v_i,v_j)$ of vertices whose exchange  improves the objective function value can be found, 
then  $\phi$ is substituted by $\phi^ {\prime}$ and the procedure is iteratively repeated.
Otherwise the algorithm terminates and outputs the current arrangement. Note that this approach would keep unchanged the set of unused leaves. 
In order to be able to vary it  we work  with an extended  guest  graph $G' = (V', E')$  with  {\em vertex set} $V' = V \cup \{v_{n + 1}, \ldots v_{b}\}$ 
and  {\em edge set} $E' = E$, where $V$ is the vertex set of the original guest graph $G$. 
The new guest graph has as many vertices as the number of leaves of the host graph. Since the vertices $v_{n + 1}$, $\ldots$,  $v_{b}$ 
of  $V' \setminus  V$ are isolated vertices, $OV(G, d,\phi) = OV(G', d,\phi')$ obviously holds for all arrangements $\phi: V \to B$ and all 
arrangements $\phi': V' \to B$ such that $\phi(v) = \phi'(v)$ for all $v \in V$. 
Thus we can solve $DAPT(G',d)$ instead of solving $DAPT(G,d)$; an  optimal solution $\phi_{\ast}$ 
of $DAPT(G,d)$ is obtained from an optimal solution $\phi'_{\ast}$ of $DAPT(G',d)$ by setting $\phi_{\ast}(v)=\phi'_{\ast}(v)$, $\forall v \in V$. 
Note that, however, if the starting arrangement  is contiguous, then applying the pair-exchange to  $DAPT(G',d)$ instead of $DAPT(G,d)$   
can not generate  any variation in the set of used leaves. The reason is that 
\begin{equation}
	\label{equation:pe:1}
	d_T(b_i, b_j) \leq d_T(b_i, b_k) 
\end{equation}
holds for all $1 \leq i < j < k \leq b$ and thus a pair-exchange which  arranges  an  isolated vertex $v' \in V' \backslash V$ between some pair of  
eventually connected vertices can never improve the objective function value.

\begin{theorem}
	\label{theorem:peTimeComplexity}
	The pair-exchange heuristic for the $DAPT(G,d)$ can be implemented  with time complexity $O(n^2 d^2m\min\{m,n\} (\log{n}))$, 
 where $n$ is the number of vertices and $m$ is the number of edges in $G$. If the starting arrangement is contiguous, 
then the heuristic can be implemented in $O(n^2 m\min(m,n)(\log{n}))$ time.

		\begin{proof}
		There  are  $O(b^2)$ pairs of vertices in the graph $G'$. Since $2 m \leq OV(G, d, \phi) \leq 2 h m$ holds for every arrangement $\phi$, 
we can make at most $O(2 h m - 2 m) = O(h m) = O((\log{b}) m) = O(\log{(n d)} m) = O((\log{n} + \log{d}) m) = O(m \log{n})$ improvements of the objective function  value 
(if $d$ is considered to be a constant and by using  $b < n d$). 

Consider that the pairwise distances between all pairs of leaves in the arrangement tree can be computed  in $O(b^2\log{ n})=O(n^2d^2\log{n})$ time in a pre-processing step, see Observation~\ref{distance:obse}.
In order to update  the objective function value of an arrangement after a  pair-exchange of vertices $v_i$ and $v_j$ which transforms the current arrangement 
$\phi$ to the arrangement $\phi^{\prime}$ as in~(\ref{pairexch:equ}), 
 the length of the path between $\phi(v_i)$ ($\phi(v_j)$) and $\phi(v)$ is  substituted by  the length of the corresponding 
path between $\phi^{\prime}(v_i)$ ($\phi^{\prime}(v_j)$) and $\phi^{\prime}(v)$,  for all neighbours $v$ of $v_i$ ($v_j$).
Since the vertices which exchange position  have in total  $O(\min\{m,n\})$ neighbours, 
the objective function after a (candidate) pair-exchange can be updated in   $O(\min\{m,n\})$ time. With at most $O(b^2)$  (candidate) pair-exchanges to be performed in each  iteration 
  and at most  $O(m\log{n})$ iterations,
the overall time complexity of the algorithm amounts to $O(b^2 \min\{m,n\} m \log{n}) = O(n^2 d^2 m\min\{m,n\}\log{n})$.
	
		If the starting arrangement is contiguous, then just  $O(n^2)$ pairwise distances need to be computed in the pre-processing step and  the overall time complexity amounts to
$O(n^2 m\min\{m,n\}\log{n})$.
\qed
	\end{proof}
\end{theorem}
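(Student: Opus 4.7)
The plan is to decompose the total running time into three independent pieces: (i) a one-shot pre-processing cost for tabulating pairwise leaf distances, (ii) an upper bound on the number of improving outer iterations, and (iii) the per-iteration cost of scanning all candidate pair-exchanges. Multiplying these three together will give the advertised bound, once I confirm that updating the objective value after a single swap is cheap.

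For the pre-processing step I would invoke Observation~\ref{distance:obse}: the distance between any two leaves $b_t,b_j$ is obtained by sweeping $k$ from $1$ upward and comparing $\lfloor (t-1)/d^k\rfloor$ with $\lfloor (j-1)/d^k\rfloor$, which takes $O(h)=O(\log_d n)$ time per pair. Since the pair-exchange heuristic is really being run on the extended graph $G'$ with $b$ vertices (one per leaf), all $\binom{b}{2}=O(b^2)=O(n^2 d^2)$ pairwise distances can be stored in $O(n^2 d^2 \log n)$ time using $b<nd$. In the contiguous case, equation~(\ref{equation:pe:1}) guarantees that swapping an isolated extension vertex of $V'\setminus V$ with any vertex of $V$ can never strictly improve $OV$, so only the $O(n^2)$ distances between used leaves need to be stored, reducing pre-processing to $O(n^2\log n)$.

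Next I would bound the number of accepted exchanges. The bulk observations from Section~\ref{lowbou:sec} give $2m\le OV(G,d,\phi)\le 2hm$ for every arrangement, and every accepted exchange strictly decreases $OV$ by at least $1$ (in fact by at least $2$, since leaf distances are even). Hence the heuristic terminates after at most $2hm-2m=O(hm)=O(m\log n)$ iterations, absorbing the constant factor $\log d$ under the assumption that $d$ is fixed. Within each iteration the algorithm evaluates $O(b^2)$ candidate pairs; for each candidate swap of $v_i$ and $v_j$ only the edges incident to $v_i$ or to $v_j$ change length, so using the precomputed distance table the change in $OV$ can be read off in $O(|\Gamma(v_i)|+|\Gamma(v_j)|)$ time. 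The sum $|\Gamma(v_i)|+|\Gamma(v_j)|$ is bounded both by $n$ (there are only $n$ potential neighbours) and by $2m$ (total degree), so it is $O(\min\{m,n\})$.

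Putting the pieces together yields $O(m\log n)\cdot O(b^2)\cdot O(\min\{m,n\}) = O(n^2 d^2 m\min\{m,n\}\log n)$ for the general version, and the analogous calculation with $b$ replaced by $n$ gives $O(n^2 m\min\{m,n\}\log n)$ in the contiguous case, each dominating its respective pre-processing cost. The main delicate point — and the step I would write most carefully — is the claim that an accepted swap really changes the objective only through edges touching $v_i$ or $v_j$, so that an $O(\min\{m,n\})$ update suffices; everything else reduces to bookkeeping around Observation~\ref{distance:obse} and the global bound $2m\le OV\le 2hm$.
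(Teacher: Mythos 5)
Your proposal is correct and follows essentially the same decomposition as the paper's proof: the $O(b^2\log n)$ distance pre-processing via Observation~\ref{distance:obse}, the $O(hm)=O(m\log n)$ bound on improving iterations from $2m\le OV\le 2hm$, the $O(\min\{m,n\})$ per-swap update cost over $O(b^2)$ candidate pairs, and the reduction to $O(n^2)$ stored distances in the contiguous case via~(\ref{equation:pe:1}). The only (harmless) extra detail you add is the remark that each accepted exchange decreases $OV$ by at least $2$ because leaf distances are even.
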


Clearly,  we can also fix an ordering of the pairs of  leaves and exchange the vertices arranged at some  pair of leaves (if any),  in this ordering.
 One would obtain a similar time complexity as in the general case of Theorem~\ref{theorem:peTimeComplexity}.  
We refer to these heuristics as {\sl vertex-based pair-exchange heuristic}\/ and {\sl leaf-based pair-exchange heuristic},  respectively.
Our computational experiments have shown that the vertex-based pair-exchange heuristic generally outperforms the leaf-based pair-exchange heuristic. For this reason we only report about the performance of the vertex-based pair-exchange heuristic (abbreviated by PEHVNA) in Section~\ref{numeric:sec}. 

\subsubsection{The shift-flip heuristic}\label{subsub:SL}
The last heuristic we discuss is the {\sl shift-flip heuristic}. First, we need two definitions.

\begin{definition}[{\sl Flip}]
	\label{definition:flipToFlip}
	Let $G = (V, E)$ be an undirected guest graph with $|V| = n$, $T$ a $d$-regular  tree,  with $2 \leq d \leq n$,  and let $B$ be  the  set of leaves of $T$. 
 Let $ \phi: V \to B$ be an arrangement. 
Further, let $e,g,l,r \in \mathbb{N}\cup\{0\}$,  be parameters with  $0 \leq e < h$,  $1\le g \le d^e$,   $1 \leq l < r \leq d$.
Finally let  $f$ be a bijection  $f: B \to B$ defined as follows:
	\begin{equation}
		\label{equation:definitionFlipToFlip}
		f(b_i) =\left \{ 
			\begin{array}{ll}
				b_{\Delta(g)+(r - 1) d^{h - (e + 1)}+t_i}	& \text{for } i= \Delta(g)+(l - 1) d^{h - (e + 1)} + t_i\, ,  1\le  t_i \leq d^{h - (e + 1)}\\
				b_{\Delta(g)+(l - 1) d^{h - (e + 1)}+t_i}	& \text{for }  i= \Delta(g)+(r - 1) d^{h - (e + 1)} + t_i\, ,  1\le  t_i \leq d^{h - (e + 1)} \\
				b_i					& \text{otherwise}\\
			\end{array} \right .
			\hspace*{-0.2cm} , 
	\end{equation}
where $\Delta(g):=(g-1)d^{h-e}$. 
	The arrangement $\phi_f: V \to B$ where $\phi_f = f \circ \phi$ is a {\sl flip of the arrangement $\phi$}. We say that we {\sl flip}\/ the arrangement $\phi$
at the $l$-th and $r$-th $d$-regular subtrees of the  $g$-th node in  level $e$.  
\end{definition}

In a more descriptive explanation  a flip consists of interchanging the preimages of the leaves of two $d$-regular subtrees of the arrangement tree which have the same
 height and whose roots have a common  father vertex, while preserving  the order of the leaves in each of the two interchanged subtrees. 
More precisely   we consider the vertices of the $d$-regular tree as being partitioned into levels, the root having level $0$, its $d$ sons having level $1$ and so on, to end up with the leaves at level $h-1$. In Definition~\ref{definition:flipToFlip} we consider the $g$-th  vertex  in level $e$ and the indices   $l$ and $r$ of two sons of that vertex. The successors of each of those suns  build a $d$-regular subtree of height $h-(e+1)$, respectively. The flip operation interchanges exactly 
the preimages of the leaves of these two $d$-regular subtrees by preserving in each subtree the order of the leaves induced by the canonical order of the leaves in $T$. 
\smallskip

For  an illustration consider an instance $DAPT(G,d)$ with guest graph $G$ given in Figure~\ref{fig:flipGraph} and  $d = 3$. 
\medskip

\begin{figure}[htbp]
	\begin{center}
		\begin{tikzpicture}[
				scale=1,
				node/.style={circle, draw=black!100, fill=white!0, thick, inner sep=0pt, minimum size=5mm}
			]
		
			\node[node] (node1) at (0.00, 0.00) {1};
			\node[node] (node2) at (1.00, 0.00) {2};
			\node[node] (node3) at (2.00, 0.00) {3};
			\node[node] (node4) at (3.00, 0.00) {4};
			\node[node] (node5) at (4.00, 0.00) {5};
			\node[node] (node6) at (5.00, 0.00) {6};
			\node[node] (node7) at (6.00, 0.00) {7};
			\node[node] (node8) at (7.00, 0.00) {8};
			\node[node] (node9) at (8.00, 0.00) {9};
			\node[node] (node10) at (9.00, 0.00) {10};
	
			\draw [-] (node1) to (node2);
			\draw [-] (node2) to (node3);
			\draw [-] (node3) to (node4);
			\draw [-] (node4) to (node5);
			\draw [-] (node5) to (node6);
			\draw [-] (node6) to (node7);
			\draw [-] (node7) to (node8);
			\draw [-] (node8) to (node9);
			\draw [-] (node9) to (node10);
		\end{tikzpicture}
	\end{center}
	\caption{\em A guest graph.}
	\label{fig:flipGraph}
\end{figure}
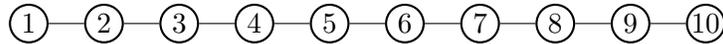
\smallskip

Consider further  an arrangement represented  in Figure~\ref{fig:flipArrangement}; each filled leaf contains the index of the vertex of $G$ 
mapped into that leaf.
\smallskip

\begin{figure}[htbp]
	\begin{center}
		\begin{tikzpicture}[
				xscale=1, yscale=0.7,
				leafUsed/.style={circle, draw=black!100, fill=black!50, thick, inner sep=0pt, minimum size=5mm},
				leafUnused/.style={circle, draw=black!100, fill=white!100, thick, inner sep=0pt, minimum size=5mm},
				otherNode/.style={circle, draw=black!50, fill=white!100, thick, inner sep=0pt, minimum size=3mm}
			]
			\node[otherNode] (node0) at (7.02, 3.00) {};

			\node[otherNode] (node1) at (2.16, 2.00) {};
			\node[otherNode] (node2) at (7.02, 2.00) {};
			\node[otherNode] (node3) at (11.88, 2.00) {};
			
			\node[otherNode] (node4) at (0.54, 1.00) {};
			\node[otherNode] (node5) at (2.16, 1.00) {};
			\node[otherNode] (node6) at (3.78, 1.00) {};
			\node[otherNode] (node7) at (5.40, 1.00) {};
			\node[otherNode] (node8) at (7.02, 1.00) {};
			\node[otherNode] (node9) at (8.64, 1.00) {};
			\node[otherNode] (node10) at (10.26, 1.00) {};
			\node[otherNode] (node11) at (11.88, 1.00) {};
			\node[otherNode] (node12) at (13.50, 1.00) {};

			\node[leafUsed] (node13) at (0.00, 0.00) {1};
			\node[leafUsed] (node14) at (0.54, 0.00) {2};
			\node[leafUnused] (node15) at (1.08, 0.00) {};
			\node[leafUnused] (node16) at (1.62, 0.00) {};
			\node[leafUnused] (node17) at (2.16, 0.00) {};
			\node[leafUnused] (node18) at (2.70, 0.00) {};
			\node[leafUnused] (node19) at (3.24, 0.00) {};
			\node[leafUnused] (node20) at (3.78, 0.00) {};
			\node[leafUsed] (node21) at (4.32, 0.00) {3};
			\node[leafUnused] (node22) at (4.86, 0.00) {};
			\node[leafUnused] (node23) at (5.40, 0.00) {};
			\node[leafUsed] (node24) at (5.94, 0.00) {4};
			\node[leafUsed] (node25) at (6.48, 0.00) {5};
			\node[leafUnused] (node26) at (7.02, 0.00) {};
			\node[leafUsed] (node27) at (7.56, 0.00) {6};
			\node[leafUsed] (node28) at (8.10, 0.00) {7};
			\node[leafUsed] (node29) at (8.64, 0.00) {8};
			\node[leafUnused] (node30) at (9.18, 0.00) {};
			\node[leafUsed] (node31) at (9.72, 0.00) {9};
			\node[leafUsed] (node32) at (10.26, 0.00) {10};
			\node[leafUnused] (node33) at (10.80, 0.00) {};
			\node[leafUnused] (node34) at (11.34, 0.00) {};
			\node[leafUnused] (node35) at (11.88, 0.00) {};
			\node[leafUnused] (node36) at (12.42, 0.00) {};
			\node[leafUnused] (node37) at (12.96, 0.00) {};
			\node[leafUnused] (node38) at (13.50, 0.00) {};
			\node[leafUnused] (node39) at (14.04, 0.00) {};

			\draw [-, black!50] (node0) to (node1);
			\draw [-, black!50] (node0) to (node2);
			\draw [-, black!50] (node0) to (node3);

			\draw [-, black!50] (node1) to (node4);
			\draw [-, black!50] (node1) to (node5);
			\draw [-, black!50] (node1) to (node6);
			\draw [-, black!50] (node2) to (node7);
			\draw [-, black!50] (node2) to (node8);
			\draw [-, black!50] (node2) to (node9);
			\draw [-, black!50] (node3) to (node10);
			\draw [-, black!50] (node3) to (node11);
			\draw [-, black!50] (node3) to (node12);

			\draw [-, black!50] (node4) to (node13);
			\draw [-, black!50] (node4) to (node14);
			\draw [-, black!50] (node4) to (node15);
			\draw [-, black!50] (node5) to (node16);
			\draw [-, black!50] (node5) to (node17);
			\draw [-, black!50] (node5) to (node18);
			\draw [-, black!50] (node6) to (node19);
			\draw [-, black!50] (node6) to (node20);
			\draw [-, black!50] (node6) to (node21);
			\draw [-, black!50] (node7) to (node22);
			\draw [-, black!50] (node7) to (node23);
			\draw [-, black!50] (node7) to (node24);
			\draw [-, black!50] (node8) to (node25);
			\draw [-, black!50] (node8) to (node26);
			\draw [-, black!50] (node8) to (node27);
			\draw [-, black!50] (node9) to (node28);
			\draw [-, black!50] (node9) to (node29);
			\draw [-, black!50] (node9) to (node30);
			\draw [-, black!50] (node10) to (node31);
			\draw [-, black!50] (node10) to (node32);
			\draw [-, black!50] (node10) to (node33);
			\draw [-, black!50] (node11) to (node34);
			\draw [-, black!50] (node11) to (node35);
			\draw [-, black!50] (node11) to (node36);
			\draw [-, black!50] (node12) to (node37);
			\draw [-, black!50] (node12) to (node38);
			\draw [-, black!50] (node12) to (node39);

			\draw [-] (node13) to [out=-60,in=-120] (node14);
			\draw [-] (node14) to [out=-40,in=-140] (node21);
			\draw [-] (node21) to [out=-60,in=-120] (node24);
			\draw [-] (node24) to [out=-60,in=-120] (node25);
			\draw [-] (node25) to [out=-60,in=-120] (node27);
			\draw [-] (node27) to [out=-60,in=-120] (node28);
			\draw [-] (node28) to [out=-60,in=-120] (node29);
			\draw [-] (node29) to [out=-60,in=-120] (node31);
			\draw [-] (node31) to [out=-60,in=-120] (node32);
			
			\draw [dashed] (13.50, 2.00) -- (0.54, 2.00) node[above] {$e = 1$};
			\draw [<-] (node2) -- (6.42, 2.25) node[above] {$g = 2$};
			\draw [<-] (node8) -- (6.42, 1.25) node[above] {$l = 2$};
			\draw [<-] (node9) -- (8.04, 1.25) node[above] {$r = 3$};
			
			\draw [decorate, decoration={brace, mirror}] (6.21, -0.60) -- (7.83, -0.60);
			\draw [decorate, decoration={brace, mirror}] (7.83, -0.60) -- (9.45, -0.60);
			\draw [<->] (7.02, -0.80) to [out=-60,in=-120] (8.64, -0.80);
		\end{tikzpicture}
	\end{center}
	\caption{\em An arrangement $\phi$ with $OV(G, 3, \phi) = 32$ for $G$ in Figure~\ref{fig:flipGraph}.}
	\label{fig:flipArrangement}
\end{figure}
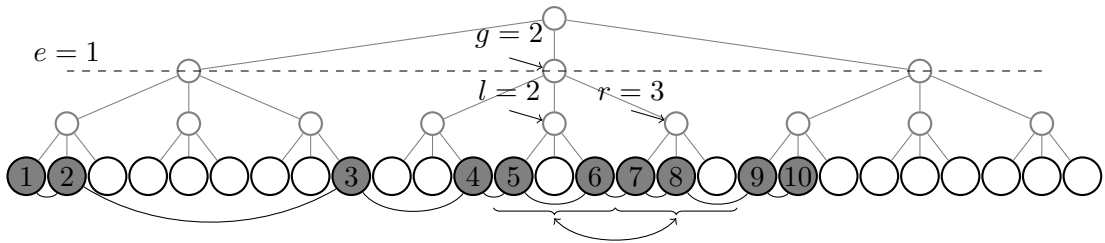

  In Figure~\ref{fig:flipArrangementFlipped} we see the flip obtained from the 
 arrangement represented in Figure~\ref{fig:flipArrangement} with parameters  $e = 1$, $g = 2$, $l = 2$ and $r = 3$. 
Note that flipping does not change the  objective  function value of the arrangement.

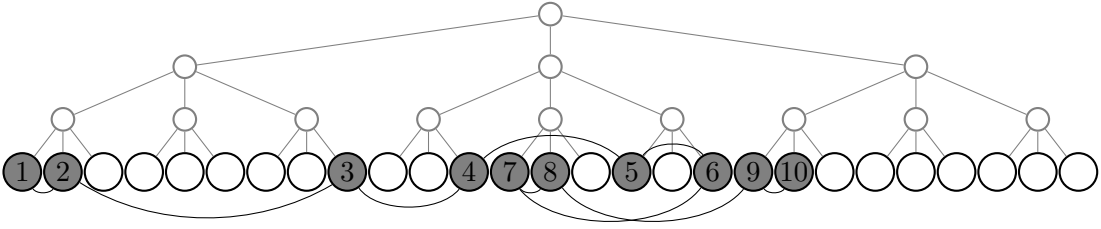
\begin{figure}[htbp]
	\begin{center}
		\begin{tikzpicture}[
				xscale=1, yscale=0.7,
				leafUsed/.style={circle, draw=black!100, fill=black!50, thick, inner sep=0pt, minimum size=5mm},
				leafUnused/.style={circle, draw=black!100, fill=white!100, thick, inner sep=0pt, minimum size=5mm},
				otherNode/.style={circle, draw=black!50, fill=white!100, thick, inner sep=0pt, minimum size=3mm}
			]
			\node[otherNode] (node0) at (7.02, 3.00) {};

			\node[otherNode] (node1) at (2.16, 2.00) {};
			\node[otherNode] (node2) at (7.02, 2.00) {};
			\node[otherNode] (node3) at (11.88, 2.00) {};
			
			\node[otherNode] (node4) at (0.54, 1.00) {};
			\node[otherNode] (node5) at (2.16, 1.00) {};
			\node[otherNode] (node6) at (3.78, 1.00) {};
			\node[otherNode] (node7) at (5.40, 1.00) {};
			\node[otherNode] (node8) at (7.02, 1.00) {};
			\node[otherNode] (node9) at (8.64, 1.00) {};
			\node[otherNode] (node10) at (10.26, 1.00) {};
			\node[otherNode] (node11) at (11.88, 1.00) {};
			\node[otherNode] (node12) at (13.50, 1.00) {};

			\node[leafUsed] (node13) at (0.00, 0.00) {1};
			\node[leafUsed] (node14) at (0.54, 0.00) {2};
			\node[leafUnused] (node15) at (1.08, 0.00) {};
			\node[leafUnused] (node16) at (1.62, 0.00) {};
			\node[leafUnused] (node17) at (2.16, 0.00) {};
			\node[leafUnused] (node18) at (2.70, 0.00) {};
			\node[leafUnused] (node19) at (3.24, 0.00) {};
			\node[leafUnused] (node20) at (3.78, 0.00) {};
			\node[leafUsed] (node21) at (4.32, 0.00) {3};
			\node[leafUnused] (node22) at (4.86, 0.00) {};
			\node[leafUnused] (node23) at (5.40, 0.00) {};
			\node[leafUsed] (node24) at (5.94, 0.00) {4};
			\node[leafUsed] (node25) at (6.48, 0.00) {7};
			\node[leafUsed] (node26) at (7.02, 0.00) {8};
			\node[leafUnused] (node27) at (7.56, 0.00) {};
			\node[leafUsed] (node28) at (8.10, 0.00) {5};
			\node[leafUnused] (node29) at (8.64, 0.00) {};
			\node[leafUsed] (node30) at (9.18, 0.00) {6};
			\node[leafUsed] (node31) at (9.72, 0.00) {9};
			\node[leafUsed] (node32) at (10.26, 0.00) {10};
			\node[leafUnused] (node33) at (10.80, 0.00) {};
			\node[leafUnused] (node34) at (11.34, 0.00) {};
			\node[leafUnused] (node35) at (11.88, 0.00) {};
			\node[leafUnused] (node36) at (12.42, 0.00) {};
			\node[leafUnused] (node37) at (12.96, 0.00) {};
			\node[leafUnused] (node38) at (13.50, 0.00) {};
			\node[leafUnused] (node39) at (14.04, 0.00) {};

			\draw [-, black!50] (node0) to (node1);
			\draw [-, black!50] (node0) to (node2);
			\draw [-, black!50] (node0) to (node3);

			\draw [-, black!50] (node1) to (node4);
			\draw [-, black!50] (node1) to (node5);
			\draw [-, black!50] (node1) to (node6);
			\draw [-, black!50] (node2) to (node7);
			\draw [-, black!50] (node2) to (node8);
			\draw [-, black!50] (node2) to (node9);
			\draw [-, black!50] (node3) to (node10);
			\draw [-, black!50] (node3) to (node11);
			\draw [-, black!50] (node3) to (node12);

			\draw [-, black!50] (node4) to (node13);
			\draw [-, black!50] (node4) to (node14);
			\draw [-, black!50] (node4) to (node15);
			\draw [-, black!50] (node5) to (node16);
			\draw [-, black!50] (node5) to (node17);
			\draw [-, black!50] (node5) to (node18);
			\draw [-, black!50] (node6) to (node19);
			\draw [-, black!50] (node6) to (node20);
			\draw [-, black!50] (node6) to (node21);
			\draw [-, black!50] (node7) to (node22);
			\draw [-, black!50] (node7) to (node23);
			\draw [-, black!50] (node7) to (node24);
			\draw [-, black!50] (node8) to (node25);
			\draw [-, black!50] (node8) to (node26);
			\draw [-, black!50] (node8) to (node27);
			\draw [-, black!50] (node9) to (node28);
			\draw [-, black!50] (node9) to (node29);
			\draw [-, black!50] (node9) to (node30);
			\draw [-, black!50] (node10) to (node31);
			\draw [-, black!50] (node10) to (node32);
			\draw [-, black!50] (node10) to (node33);
			\draw [-, black!50] (node11) to (node34);
			\draw [-, black!50] (node11) to (node35);
			\draw [-, black!50] (node11) to (node36);
			\draw [-, black!50] (node12) to (node37);
			\draw [-, black!50] (node12) to (node38);
			\draw [-, black!50] (node12) to (node39);

			\draw [-] (node13) to [out=-60,in=-120] (node14);
			\draw [-] (node14) to [out=-40,in=-140] (node21);
			\draw [-] (node21) to [out=-60,in=-120] (node24);
			\draw [-] (node24) to [out=50,in=130] (node28);
			\draw [-] (node28) to [out=60,in=120] (node30);
			\draw [-] (node30) to [out=-120,in=-60] (node25);
			\draw [-] (node25) to [out=-60,in=-120] (node26);
			\draw [-] (node26) to [out=-60,in=-120] (node31);
			\draw [-] (node31) to [out=-60,in=-120] (node32);
		\end{tikzpicture}
	\end{center}
	\caption{\em A flip of the arrangement shown in Figure~\ref{fig:flipArrangement}, for the guest graph shown in Figure~\ref{fig:flipGraph} and $d = 3$. The parameters of the flip are $e=1$, $g=2$, $l=2$ and $r=3$. The objective value of the flipped arrangement remains unchanged and equals $32$.}
	\label{fig:flipArrangementFlipped}
\end{figure}

\begin{proposition}
Let $G = (V, E)$ be an undirected guest graph with $|V| = n$,
 $T$ a $d$-regular  tree, with $2 \leq d \leq n$ and  let  $B$ be  the set of  leaves of $T$.   
Further, let $e,g,l,r \in \mathbb{N}\cup\{0\}$,  be parameters with  $0 \leq e < h$,  $1\le g \le d^e$,   $1 \leq l < r \leq d$. 
Let $f$ be a bijective function $f: B \to B$ defined as in Definition~\ref{definition:flipToFlip}. 
For any arrangement  $ \phi: V \to B$ and the corresponding    flip $\phi_f: V \to B$ of the arrangement  $\phi$,    
$\phi_f = f \circ \phi$,  
the equality  $OV(G,d,\phi_f)=OV(G,d,\phi)$ holds.
\end{proposition}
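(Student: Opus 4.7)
The plan is to reduce the claim to the purely geometric fact that $f$ is an isometry on $(B,d_T)$. Once we know $d_T(f(b_i),f(b_j))=d_T(b_i,b_j)$ for every pair of leaves $b_i,b_j$, the equality of objective function values follows immediately, since $\phi_f=f\circ\phi$ gives
\[ OV(G,d,\phi_f)=\sum_{(u,v)\in E} d_T\bigl(f(\phi(u)),f(\phi(v))\bigr)=\sum_{(u,v)\in E} d_T(\phi(u),\phi(v))=OV(G,d,\phi). \]
So the heart of the argument consists of verifying that $f$ preserves leaf-pair distances.

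To establish this I would observe that $f$ is exactly the leaf-restriction of the natural tree automorphism of $T$ which transposes the $d$-regular subtrees $T_l$ and $T_r$ rooted at the $l$-th and $r$-th son of the $g$-th vertex at level $e$ (mapping the $t$-th leaf in the canonical order of $T_l$ to the $t$-th leaf in the canonical order of $T_r$), while fixing every other vertex of $T$. Using Observation~\ref{distance:obse}, the invariance $d_T(f(b_i),f(b_j))=d_T(b_i,b_j)$ can then be checked by a short case distinction on where $b_i$ and $b_j$ sit with respect to $T_l\cup T_r$. If both leaves lie outside $T_l\cup T_r$, the map $f$ fixes them and there is nothing to prove. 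If both lie inside the same subtree, the action of $f$ on their indices is the constant shift $t\mapsto t+(r-l)d^{h-(e+1)}$, so the smallest $k$ at which the truncations $\lfloor (i-1)/d^k\rfloor$ and $\lfloor (j-1)/d^k\rfloor$ coincide is unchanged. If $b_i\in T_l$ and $b_j\in T_r$ (or vice versa), both the original and the flipped pair have their lowest common ancestor at the $g$-th vertex of level $e$, so the distance equals $2(h-e)$ in both cases. Finally, if exactly one endpoint lies inside $T_l\cup T_r$ and the other outside, the symmetry of $T_l$ and $T_r$ (they have the same parent and the same internal structure) together with the position-preserving definition of $f$ ensures that the lowest common ancestor of the pair, and therefore the distance, is unchanged.

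The only mild technical point is the bookkeeping in the ``both inside the same subtree'' case: one has to check that adding the constant $(r-l)d^{h-(e+1)}$ to two indices inside a single subtree does not move any of the relevant floor-function truncations into a different block. This is straightforward, because the shift is a multiple of $d^{h-(e+1)}$: for $k\le h-(e+1)$ both truncations $\lfloor((\cdot)-1)/d^k\rfloor$ are increased by the same integer, so their equality is preserved, while for $k\ge h-e$ the truncations for any two indices inside a single subtree agree both before and after the shift. Once this is recorded the remaining cases are routine and the proposition follows.
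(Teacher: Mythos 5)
Your argument is correct and follows essentially the same route as the paper: you isolate the claim that $f$ preserves all pairwise leaf distances and verify it by the same case distinction (both leaves outside the two swapped subtrees, both in the same swapped subtree, one in each, or exactly one inside), using the same floor-function computation from Observation~\ref{distance:obse} that the paper encodes in its displayed formula for $\left\lfloor (p-1)/d^k\right\rfloor$. Your packaging of the statement as ``$f$ is the leaf restriction of a tree automorphism, hence an isometry of $(B,d_T)$'' is a slightly cleaner abstraction, but the substance of the verification is identical to the paper's.
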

\begin{proof}
To prove the statement we  make use of Observation~\ref{distance:obse}. For $v_i \in V$, $i=1,2,\ldots, n$,   let us denote 
by $p(i)$, $p_f(i)$ the indices of the leaves $\phi(v_i)$, $\phi_f(v_i)$  of $T$ in the canonical ordering, respectively. 
We clearly have $p(i),p_f(i) \in 1,2,\ldots, d^h$, for all $i=1,2,\ldots,n$. 
According to Observation~\ref{distance:obse} we get the following expressions for the objective function values of $\phi$ and $\phi_f$: 

\begin{eqnarray}
\label{objfnc:equ}
OV(G,d,\phi)&= &\sum_{\{v_i,v_j\}\in E(G)}d_T\left (\phi(v_i),\phi(v_j)\right )\\
\nonumber  
 &=& \sum_{\{v_i,v_j\}\in E(G)} 2\mbox{argmin}\left \{k \in \{1,2,\ldots,h\}\colon \left \lfloor \frac{p(i)-1}{d^k} \right \rfloor=\left \lfloor \frac{p(j)-1}{d^k} \right \rfloor\right \}
\end{eqnarray}
and 
\begin{eqnarray}
\label{objfnc_f:equ} 
OV(G,d,\phi_f)&=&\sum_{\{v_i,v_j\}\in E(G)}d_T\left (\phi_f(v_i),\phi_f(v_j)\right )\\
\nonumber
&=& \sum_{\{v_i,v_j\}\in E(G)} 2\mbox{argmin}\left \{k \in \{1,2,\ldots,h\}\colon \left \lfloor \frac{p_f(i)-1}{d^k} \right \rfloor= \left \lfloor \frac{p_f(j)-1}{d^k} \right \rfloor\right \}
\end{eqnarray}

 Consider  the index $p$ of an arbitrary leaf $b_p$ of $T$ (in the canonical order) 
 written as $p= (u-1)d^{h-e}+ (s - 1) d^{h - (e + 1)} + t$ for some natural numbers 
 $1\le u\le d^{e}$, $1\le s\le d$ and $1\le  t\le d^ {h-(e+1)}$.
$u$  represents the index of the unique node $x$ at level $e$ which is an ancestor of $b_p$, $s$ represents the index of the $d$-regular subtree $T_1$ of height $h-(e+1)$
hanging  on $x$ and $t$ represents the index of $b_p$ in $T_1$ according to the canonical order of the leaves of  $T_1$ induced  by the canonical order of the leaves of $T$. Then the following equality holds 

\begin{equation} \label{division}
\left \lfloor \frac{p-1}{d^k}\right \rfloor=\left \{ \begin{array}{ll}
(u-1)d^{h-e-k} + (s-1)d^{h-(e+1)-k}+\left \lfloor\frac{t-1}{d^k}\right \rfloor  &\mbox{ if } k<h-(e+1)\\[0.2cm]
(u-1) &  \mbox{ if } k=h-e\\[0.2cm]
\left \lfloor \frac{u}{d^{k-(h-e)}} \right \rfloor & \mbox{ if } k>h-e\\[0.3cm]
(u-1)d+(s-1) & \mbox{ if } k=h-(e+1)\end{array}\right .\, ,\end{equation}
 for any $1\le u\le d^e$, any $1\le s\le d$ and any $1\le t\le d^{h-(e+1)}$.
 Notice that according to  Definition~\ref{definition:flipToFlip} $\phi(v_i)\neq \phi_f(v_i)$ holds,  only if $p(i)= \Delta(g)+(l - 1) d^{h - (e + 1)} + t_i$ or  
 $p(i)= \Delta(g)+(r - 1) d^{h - (e + 1)} + t_i$ with some $ 1\le  t_i \leq d^{h - (e + 1)}$. 
Moreover, the following two implications hold  for $t_i =1,2,\ldots, d^{h - (e + 1)}$:
\begin{equation}\label{implication1}
   \mbox{$p(i)= \Delta(g)+(l - 1) d^{h - (e + 1)} + t_i$ implies  $p_f(i)=\Delta(g)+(r - 1) d^{h - (e + 1)} + t_i$,}
\end{equation}

\begin{equation} \label{implication2} 
\mbox{$p(i)= \Delta(g)+(r - 1) d^{h - (e + 1)} + t_i$  implies  $p_f(i)=\Delta(g)+(l - 1) d^{h - (e + 1)} + t_i$.}\end{equation}
\smallskip

 Consider now an edge $(v_i,v_j)$ with $\phi(v_i)\neq \phi_f(v_i)$ or $\phi(v_j)\neq \phi_f(v_j)$, which is equivalent to $p(i)\neq p_f(i)$ or $p(j)\neq p_f(j)$. 
There are two cases:  (I) $p(i)\neq p_f(i)$ and  $p(j)\neq p_f(j)$, or (II) just one of the inequalities $p(i)\neq p_f(i)$, $p(j)\neq p_f(j)$ holds. 
\smallskip

{\bf Case I.} In this case one of the following cases can happen:
\begin{description}
\item{Case Ia.}
 $p(i)= \Delta(g)+(l - 1) d^{h - (e + 1)} + t_i$ and   $p(j)= \Delta(g)+(l - 1) d^{h - (e + 1)} + t_j$, or
\item{Case Ib.}  $p(i)= \Delta(g)+(r - 1) d^{h - (e + 1)} + t_i$ and
 $p(j)= \Delta(g)+(r - 1) d^{h - (e + 1)} + t_j$, or 
 \item{Case Ic.}
 $p(i)= \Delta(g)+(l - 1) d^{h - (e + 1)} + t_i$ and   $p(j)= \Delta(g)+(r - 1) d^{h - (e + 1)} + t_j$, or
\item{Case Id.}  $p(i)= \Delta(g)+(r - 1) d^{h - (e + 1)} + t_i$ and
 $p(j)= \Delta(g)+(l - 1) d^{h - (e + 1)} + t_j$.
\end{description} 

In Case Ic and in Case Id we get $d(\phi(i),\phi(j))=d(\phi_f(i),\phi_f(j))=2(h-e)$ by  applying (\ref{division}) 
and considering (\ref{implication1}), (\ref{implication2}). In Case Ia and in Case Ib we get 
\[ 
d(\phi(i),\phi(j))=d(\phi_f(i),\phi_f(j))=
\]
\[2 \min\left \{ h-(e+1), \mbox{argmin} \Big \{k\in \{1,2,h-(e+2)\}\colon \frac{t_i-1}{d^k}=\frac{t_j-1}{d^k} \Big\} \right \}\, .\]
 \smallskip

{\bf Case II.} Assume w.l.o.g.\ that $p(i)=(g-1)d^{h-e}+ (l - 1) d^{h - (e + 1)} + t_i$ 
and let $p(j)=(u-1)d^{h-e}+ (s - 1) d^{h - (e + 1)} + t_j$, where $g\neq u$ or  $s\not\in\{l,r\}$.
Clearly $p_f(i)=(g-1)d^{h-e}+ (r - 1) d^{h - (e + 1)} + t_i$ and $p_f(j)=p(j)=(u-1)d^{h-e}+ (s - 1) d^{h - (e + 1)} + t_j$. 
If $u=g$ and $s\not\in \{l,r\}$,  then (\ref{division}) together with  Observation~\ref{distance:obse} implies $d_T(\phi(i)\phi(j))=d_T(\phi_f(i)\phi_f(j))=2(h-e)$.

\noindent  Otherwise, if $u\neq g$,  then (\ref{division}) implies 
$\left\lfloor \frac{p(i)-1}{d^k}\right\rfloor = \left\lfloor \frac{p_f(i)-1}{d^k}\right\rfloor$ for all $k\ge h-e$ and $$\left\lfloor \frac{p(i)-1}{d^k}\right\rfloor \neq \left\lfloor \frac{p(j)-1}{d^k}\right\rfloor \, \text{;} \, \left\lfloor \frac{p_f(i)-1}{d^k}\right\rfloor\neq  \left\lfloor \frac{p(j)-1}{d^k}\right\rfloor=\left\lfloor \frac{p_f(j)-1}{d^k}\right\rfloor\, ,  \mbox{for all $k < h - e$,}$$ which together with Observation~\ref{distance:obse} implies then $ d_T(\phi(i),\phi(j))=d_T(\phi_f(i),\phi_f(j))$. 

\noindent Thus             $ d_T(\phi(i)\phi(j))=d_T(\phi_f(i)\phi_f(j))$ for any edge $(v_i,v_j)\in E$. Therefore the right-hand sides of the equations   (\ref{objfnc:equ}) and (\ref{objfnc_f:equ}) are equal and $OV(G,d,\phi)=OG(G,d,\phi_f)$.\qed
\end{proof}

\begin{definition}[{\sl shift}]
	\label{definition:shiftToShift}
	Let $G = (V, E)$ be an undirected guest graph with  $|V| = n$ and  $T$ a $d$-regular arrangement tree with  $2 \leq d \leq n$, set of leaves $B$ and number of leaves $b=|B|$. 
Let  $ \phi: V \to B$ be  an arrangement. Further, let $k \in \mathbb{N}$ be an integer. An arrangement $\phi_k$ with
	\begin{equation}				
		\label{equation:shiftToShift}
		\phi_k(v) := b_{(((i - 1) + k) \mod b) + 1}\, ,  \text{ where } \phi(v) = b_i\, ,
	\end{equation}
	is a {\sl shift}\/ of the arrangement $\phi$. We say that we {\sl shift}\/ the arrangement $\phi$ by $k$.
\end{definition}

The idea of the shift-flip  heuristic is fairly simple. For a given arrangement $\phi$  we find out a $1 \leq k \leq b$ which minimises the  objective 
function value $OV(G, d, \phi_k)$. 
There are  two possibilities to define the  shift step. In the first variant we apply the  shift by $k$ defined as above only if it implies  an improvement 
of the objective function value, i.e.\ $OV(G, d, \phi_k)< OV(G, d, \phi)$,  and substitute then the current arrangement $\phi$ by the improved one $\phi_k$. 
In the second variant we also accept an arrangement which keeps the objective function value unchanged. If no such an arrangement can be found, then a further 
flip is performed. Both approaches proceed  in the next iteration  by applying a random flip to the current  arrangement and so on  until  a  termination 
criterion is satisfied. Both variants of the  heuristic output the best arrangement found during the search.  
We report about the performance of the second variant because this variant seems to outperform the first one.

 Of course there are a number  of  possibilities to define a  terminating criterion. It can be a run time bound which defines the maximum length of  a time interval  the algorithm is allowed to  run without doing an improvement. Or it can be a bound on the overall number of  flip and shift steps  performed without  improving the   objective function value.

Both variants of the shift-flip heuristic (SF)  can be combined with the  pair-exchange heuristic (PE). Since the search neighbourhoods of the two  heuristics   are significantly different, it is possible to escape from the local minima of SF by just applying a search in the PE neighbourhood and vice-versa.

\section{Test instances}
\label{TestInst:sec}
We test and  compare the above described heuristics on some families of test instances. To the best of our knowledge there are no standard test instances  for this problem, so we have generated some  test instances ourselves.   We introduce the following  families  of test instances which are also available at \url{http://www.opt.math.tu-graz.ac.at/~cela/public.htm}.

\subsection{Test instances solvable  by complete enumeration}
The guest  graphs of these instances are marked by the prefix {\em ``CE\_''}. The first graph in this category {\em CE\_sample}\/ corresponds 
to the graph in Figure~\ref{fig:graph}. Further  we consider  2 (thin) graphs {\em CE\_thin7}\/ ($n = 7$, $m = 7$) and 
{\em CE\_thin10}\/ ($n = 10$, $m = 11$) with $7$ and $10$ vertices,  respectively.  
We generate test instances with guest graph  {\em CE\_thin7}\/ and all possible values of $d$,  $2\le d\le 7$. With the guest graph  
{\em CE\_thin10}\/ we generate instances with  $d = 2$ and $d = 4$. Further we consider   some analogous instances with denser guest  graphs:  
{\em CE\_dense7}\/ ($n = 7$, $m = 14$) and {\em CE\_dense10}\/ ($n = 10$, $m = 26$) with $7$ and $10$ vertices.
Finally, we consider  a $3 \times 3$ mesh {\em CE\_mesh9}\/ and  $d = 2$, $d = 3$ and $d = 4$.

For this family of test instances the  precise values of $d$ were chosen so as to be able to solve these instances by complete enumeration within a prespecified time limit, see Section~\ref{numeric:sec}.    

\subsection{Test instances with known optimal solution}
These instances are special cases of the  DAPT which can be solved by a polynomial time algorithm, see \cite{CeSta13,Sta12}. 
The guest  graphs of these instances are marked  by the prefix {\em ``SC\_''}.  Unless the special case involves a particular choice of $d$, 
we use $d = 2$ and $d = 7$ for all considered guest graphs. We consider  instances of following types:
\begin{itemize}
	\item Instances for  which  $d = n - 1$ where $n$ is the number of vertices of the guest graph. 
We use  $3$ guest graphs generated at random for this class of instances: {\em SC\_random25}, {\em SC\_random50}\/ and {\em SC\_random75}. 
These graphs  have the same number of vertices,  $n=500$,  and in each of them  any pair of non-equal vertices build an edge independently 
at random  with probability  $0.25$, $0.50$ and $0.75$, respectively.
	\item Instances whose guest graphs build  a {\em star}, that is they consist just  of a {\sl central vertex}\/ connected by 
an edge to all other vertices of the graph. The concrete graphs are $SC\_star50$, $SC\_star500$ and $SC\_star1000$ with $50$, $500$ and 
$1000$ vertices,  respectively.
	\item The guest graph in Figure~\ref{fig:noMinimumContinuousArrangementExistsGraph} and the choice $d = 4$. This guest graph is 
an extended star and this instance is referred to as $SC\_extStar$. 
	\item Instances whose guest graphs build  a $d${\em -regular tree}. We denote these guest graphs/instances  by {\em SC\_treeDGxHy}\/ 
where $x = d$ holds and $y$ is the height of the tree.
	\item Instances whose guest graphs build of a {\em path}. We denote these guest  graphs  by  {\em SC\_path50}, {\em SC\_path500}\/ 
and {\em SC\_path1000}. They have $50$, $500$ and $1000$ vertices,  respectively.
	\item Instances whose guest graphs build a {\em simple cycle}. We created 3 graphs of this type: {\em SC\_simpleCycle50}, 
{\em SC\_simpleCycle500}\/ and {\em SC\_simpleCycle1000}\/ with $50$, $500$ and $1000$ vertices,  respectively.
\end{itemize}

\subsection{Randomly generated test instances}
\label{random:subsect}
The guest  graphs of these instances are marked by the prefix {\em ``RG\_''}. This instances are generated in the same way as the instances 
{\em SC\_random25}, {\em SC\_random50}\/ and {\em SC\_random75}.  
All guest graphs in this class of instances have $500$ vertices  and the  pairs of vertices are present as edges in the graphs randomly 
and independently  with the same  constant  probability, say  $\frac{x}{100}$.  
For each $x$ two  random  graphs are constructed as above and are denoted  by {\em RG\_randomAx}\/ and {\em RG\_randomBx}.
The degree of the regularity of the host tree is set to  $d = 2$ and $d = 7$.

\subsection{Instances with graphs taken from  Petit~\cite{Pet03}}
The guest graphs  of these instances are marked by the prefix {\em ``Pet03\_''}. 
These graphs were used in~\cite{Pet03} to test some heuristics for the linear arrangement problem (LAP), 
a problem related to the DAPT as explained in Section~\ref{intro:sec}. 
Also in this family of instances we use $d = 2$ and $d = 7$. This choice of the parameter $d$ is 
 motivated by the goal of  comparing  the behaviour of the proposed heuristics when a smaller and a larger value 
of the parameter $d$ are considered ($d=2$ and $d=7$).  

\section{Numerical results}
\label{numeric:sec}
The results of all numerical tests are summarised in the tables in Appendix. 
We group the test instances described in Section~\ref{TestInst:sec} in three groups: instances solvable by complete enumeration, polomially solvable instances and the rest. 
Table~\ref{tab:summaryForTheInstancesSolvedByTheCompleteEnumeration} reports
 on instances which could be solved to optimality by  complete enumeration 
on the following computer in $1$ week: HP Compaq nx7400,  32 bit Intel processor (Intel\textregistered Centrino\textsuperscript{\textregistered} Duo T2250 1.73 GHz), running in Ubuntu (Linux). 
Tabel~\ref{tab:summaryForTheInstancesSolvedByAPolynomialAlgorithm} summarises the results for the instances which are solvable to optimality in  polynomial time. 
Table~\ref{tab:summaryForTheInstancesWithoutAKnownOptimum} summarises the computational results obtained for the remaining instances.

In order to compare the quality of the proposed heuristics we define a {\sl quality quotient}\/ as follows 
\begin{equation}
	\label{equation:q}
	q(\mathscr{I}, \mathscr{H}) = \frac{1}{|\mathscr{I}|} 
\sum_{DAPT(G, d) \in \mathscr{I}}{\frac{\min_{HE \in \mathscr{H}}{\{HE(G, d)\}}}{\max{\{OS(G, d), DG(G, d)\}}}},
\end{equation}
where $\mathscr{I}$ denotes a  set of test instances $DAPT(G, d)$ with guest graph $G$ and degree of regularity $d$. $\mathscr{H}$ denotes a  
set of heuristics, $HE(G, d)$ stays for the  objective value obtained from the heuristic $HE$ for the instance $DAPT(G, d)$, $DG(G, d)$ represents
 the  degree bound for this instance and $OS(G, d)$ stays for the objective function value of an  optimal solution. 
We set $OS(G, d) = 0$ if the objective value of an optimal solution is unknown. We also write $q(G, d, \mathscr{H})$ for 
$q(\mathscr{I}, \mathscr{H})$ if $\mathscr{I} = \{DAPT(G, d)\}$.

 We evaluate also the so-called  {\em success factor}\/ which for a certain group  of instances and a certain heuristic gives 
the proportion of instances for which the considered heuristic computes the best known solution.

\subsection{Results on test instances solvable by complete enumeration}
Let us first consider Table~\ref{tab:summaryForTheInstancesSolvedByTheCompleteEnumeration}. 
All instances of this class   are very small (in fact, they have only $5$ -- $10$ vertices), and thus most of the heuristics 
were able to return an optimal solution for many instances.   
The success factors for the instances of this group  are summarised in 
Figure~\ref{fig:successFactorOfTheLowerBoundAndOfTheHeuristicsForTheInstancesSolvedByTheCompleteEnumeration} 
(the  acronyms are listed on the last page). 
The {\em DB}\/ entry shows us the proportion of the test instances whose optimal objective function value  equals  the  degree bound.  
The degree bound coincides with the optimal objective function value  only in the special case $d = n$.  Notice that in this 
case all arrangements yield the same objective function value and the DAPT is trivial. 

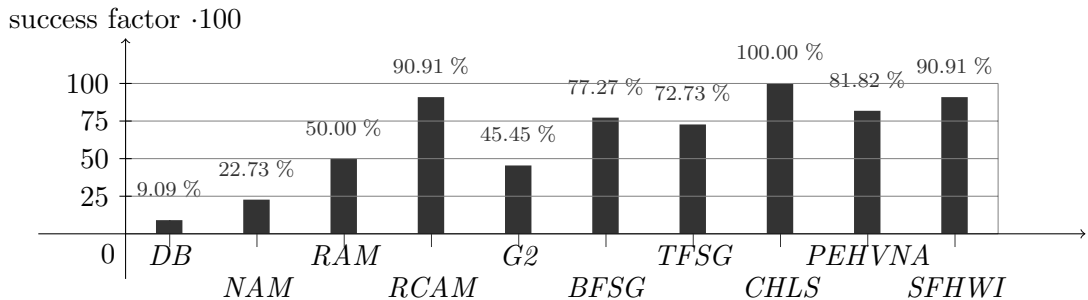
\begin{figure}[htbp]
	\begin{center}
		\begin{tikzpicture}[ycomb, xscale=1.16, yscale=0.02]
				
			\draw[color=black!80, line width=10pt]
				plot coordinates{(0.5000, 0.0000)(0.5000, 9.0909)} node[above] {\scriptsize $9.09 \; \%$};

			\draw[color=black!80, line width=10pt]
				plot coordinates{(1.5000, 0.0000)(1.5000, 22.7273)} node[above] {\scriptsize $22.73 \; \%$};

			\draw[color=black!80, line width=10pt]
				plot coordinates{(2.5000, 0.0000)(2.5000, 50.0000)} node[above] {\scriptsize $50.00 \; \%$};

			\draw[color=black!80, line width=10pt]
				plot coordinates{(3.5000, 0.0000)(3.5000, 90.9091)} node[above] {\scriptsize $90.91 \; \%$};

			\draw[color=black!80, line width=10pt]
				plot coordinates{(4.5000, 0.0000)(4.5000, 45.4545)} node[above] {\scriptsize $45.45 \; \%$};

			\draw[color=black!80, line width=10pt]
				plot coordinates{(5.5000, 0.0000)(5.5000, 77.2727)} node[above] {\scriptsize $77.27 \; \%$};

			\draw[color=black!80, line width=10pt]
				plot coordinates{(6.5000, 0.0000)(6.5000, 72.7273)} node[above] {\scriptsize $72.73 \; \%$};

			\draw[color=black!80, line width=10pt]
				plot coordinates{(7.5000, 0.0000)(7.5000, 100.0000)} node[above] {\scriptsize $100.00 \; \%$};

			\draw[color=black!80, line width=10pt]
				plot coordinates{(8.5000, 0.0000)(8.5000, 81.8182)} node[above] {\scriptsize $81.82 \; \%$};

			\draw[color=black!80, line width=10pt]
				plot coordinates{(9.5000, 0.0000)(9.5000, 90.9091)} node[above] {\scriptsize $90.91 \; \%$};

			\draw[very thin, color=gray, ystep=25, xstep=10] (0, 0) grid (10, 100);
			
			\draw[->] (-1, 0) -- (11, 0) node[right] {};
			\draw[->] (0, -30) -- (0, 130) node[above] {success factor $\cdot 100$};
			
			\foreach \pos in {0.5} \draw[shift={(\pos,-400pt)}] node {\it DB};
			\foreach \pos in {1.5} \draw[shift={(\pos,-1000pt)}] node {\it NAM};
			\foreach \pos in {2.5} \draw[shift={(\pos,-400pt)}] node {\it RAM};
			\foreach \pos in {3.5} \draw[shift={(\pos,-1000pt)}] node {\it RCAM};
			\foreach \pos in {4.5} \draw[shift={(\pos,-400pt)}] node {\it G2};
			\foreach \pos in {5.5} \draw[shift={(\pos,-1000pt)}] node {\it BFSG};
			\foreach \pos in {6.5} \draw[shift={(\pos,-400pt)}] node {\it TFSG};
			\foreach \pos in {7.5} \draw[shift={(\pos,-1000pt)}] node {\it CHLS};
			\foreach \pos in {8.5} \draw[shift={(\pos,-400pt)}] node {\it PEHVNA};
			\foreach \pos in {9.5} \draw[shift={(\pos,-1000pt)}] node {\it SFHWI};
			
			\foreach \pos in {0} \draw[shift={(\pos,0)}] node[below left] {$\pos$};
			\foreach \pos in {25, 50, 75, 100} \draw[shift={(0,\pos)}] (2pt,0pt) -- (-2pt,0pt) node[left] {$\pos$};
		\end{tikzpicture}
	\end{center}
	\caption{\em Success factors for the instances solved by  complete enumeration.}
	\label{fig:successFactorOfTheLowerBoundAndOfTheHeuristicsForTheInstancesSolvedByTheCompleteEnumeration}
\end{figure}

\subsection{Results on test instances solvable in polynomial time}
 Table~\ref{tab:summaryForTheInstancesSolvedByAPolynomialAlgorithm} is related to  the instances which can be solved by a polynomial time algorithm. This group of instances is  divided into four  parts as follows.

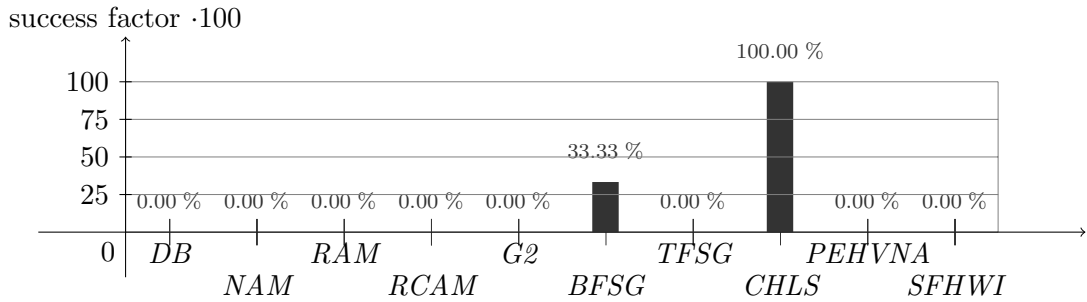
\begin{figure}[htbp]
	\begin{center}
		\begin{tikzpicture}[ycomb, xscale=1.16, yscale=0.02]
				
			\draw[color=black!80, line width=10pt]
				plot coordinates{(0.5000, 0.0000)(0.5000, 0.0000)} node[above] {\scriptsize $0.00 \; \%$};

			\draw[color=black!80, line width=10pt]
				plot coordinates{(1.5000, 0.0000)(1.5000, 0.0000)} node[above] {\scriptsize $0.00 \; \%$};

			\draw[color=black!80, line width=10pt]
				plot coordinates{(2.5000, 0.0000)(2.5000, 0.0000)} node[above] {\scriptsize $0.00 \; \%$};

			\draw[color=black!80, line width=10pt]
				plot coordinates{(3.5000, 0.0000)(3.5000, 0.0000)} node[above] {\scriptsize $0.00 \; \%$};

			\draw[color=black!80, line width=10pt]
				plot coordinates{(4.5000, 0.0000)(4.5000, 0.0000)} node[above] {\scriptsize $0.00 \; \%$};

			\draw[color=black!80, line width=10pt]
				plot coordinates{(5.5000, 0.0000)(5.5000, 33.3333)} node[above] {\scriptsize $33.33 \; \%$};

			\draw[color=black!80, line width=10pt]
				plot coordinates{(6.5000, 0.0000)(6.5000, 0.0000)} node[above] {\scriptsize $0.00 \; \%$};

			\draw[color=black!80, line width=10pt]
				plot coordinates{(7.5000, 0.0000)(7.5000, 100.0000)} node[above] {\scriptsize $100.00 \; \%$};

			\draw[color=black!80, line width=10pt]
				plot coordinates{(8.5000, 0.0000)(8.5000, 0.0000)} node[above] {\scriptsize $0.00 \; \%$};

			\draw[color=black!80, line width=10pt]
				plot coordinates{(9.5000, 0.0000)(9.5000, 0.0000)} node[above] {\scriptsize $0.00 \; \%$};

			\draw[very thin, color=gray, ystep=25, xstep=10] (0, 0) grid (10, 100);
			
			\draw[->] (-1, 0) -- (11, 0) node[right] {};
			\draw[->] (0, -30) -- (0, 130) node[above] {success factor $\cdot 100$};
			
			\foreach \pos in {0.5} \draw[shift={(\pos,-400pt)}] node {\it DB};
			\foreach \pos in {1.5} \draw[shift={(\pos,-1000pt)}] node {\it NAM};
			\foreach \pos in {2.5} \draw[shift={(\pos,-400pt)}] node {\it RAM};
			\foreach \pos in {3.5} \draw[shift={(\pos,-1000pt)}] node {\it RCAM};
			\foreach \pos in {4.5} \draw[shift={(\pos,-400pt)}] node {\it G2};
			\foreach \pos in {5.5} \draw[shift={(\pos,-1000pt)}] node {\it BFSG};
			\foreach \pos in {6.5} \draw[shift={(\pos,-400pt)}] node {\it TFSG};
			\foreach \pos in {7.5} \draw[shift={(\pos,-1000pt)}] node {\it CHLS};
			\foreach \pos in {8.5} \draw[shift={(\pos,-400pt)}] node {\it PEHVNA};
			\foreach \pos in {9.5} \draw[shift={(\pos,-1000pt)}] node {\it SFHWI};
			
			\foreach \pos in {0} \draw[shift={(\pos,0)}] node[below left] {$\pos$};
			\foreach \pos in {25, 50, 75, 100} \draw[shift={(0,\pos)}] (2pt,0pt) -- (-2pt,0pt) node[left] {$\pos$};
		\end{tikzpicture}
	\end{center}
	\caption{\em Success factors  for the instances with $d = n - 1$.}
	\label{fig:successFactorOfTheLowerBoundAndOfTheHeuristicsForTheInstancesWhichHoldTheEqualityDEqualNMinus1}
\end{figure}

\begin{figure}[htbp]
	\begin{center}
		\begin{tikzpicture}[ycomb, xscale=1.16, yscale=0.02]
				
			\draw[color=black!80, line width=10pt]
				plot coordinates{(0.5000, 0.0000)(0.5000, 0.0000)} node[above] {\scriptsize $0.00 \; \%$};

			\draw[color=black!80, line width=10pt]
				plot coordinates{(1.5000, 0.0000)(1.5000, 100.0000)} node[above] {\scriptsize $100.00 \; \%$};

			\draw[color=black!80, line width=10pt]
				plot coordinates{(2.5000, 0.0000)(2.5000, 0.0000)} node[above] {\scriptsize $0.00 \; \%$};

			\draw[color=black!80, line width=10pt]
				plot coordinates{(3.5000, 0.0000)(3.5000, 100.0000)} node[above] {\scriptsize $100.00 \; \%$};

			\draw[color=black!80, line width=10pt]
				plot coordinates{(4.5000, 0.0000)(4.5000, 0.0000)} node[above] {\scriptsize $0.00 \; \%$};

			\draw[color=black!80, line width=10pt]
				plot coordinates{(5.5000, 0.0000)(5.5000, 0.0000)} node[above] {\scriptsize $0.00 \; \%$};

			\draw[color=black!80, line width=10pt]
				plot coordinates{(6.5000, 0.0000)(6.5000, 100.0000)} node[above] {\scriptsize $100.00 \; \%$};

			\draw[color=black!80, line width=10pt]
				plot coordinates{(7.5000, 0.0000)(7.5000, 100.0000)} node[above] {\scriptsize $100.00 \; \%$};

			\draw[color=black!80, line width=10pt]
				plot coordinates{(8.5000, 0.0000)(8.5000, 100.0000)} node[above] {\scriptsize $100.00 \; \%$};

			\draw[color=black!80, line width=10pt]
				plot coordinates{(9.5000, 0.0000)(9.5000, 100.0000)} node[above] {\scriptsize $100.00 \; \%$};

			\draw[very thin, color=gray, ystep=25, xstep=10] (0, 0) grid (10, 100);
			
			\draw[->] (-1, 0) -- (11, 0) node[right] {};
			\draw[->] (0, -30) -- (0, 130) node[above] {success factor $\cdot 100$};
			
			\foreach \pos in {0.5} \draw[shift={(\pos,-400pt)}] node {\it DB};
			\foreach \pos in {1.5} \draw[shift={(\pos,-1000pt)}] node {\it NAM};
			\foreach \pos in {2.5} \draw[shift={(\pos,-400pt)}] node {\it RAM};
			\foreach \pos in {3.5} \draw[shift={(\pos,-1000pt)}] node {\it RCAM};
			\foreach \pos in {4.5} \draw[shift={(\pos,-400pt)}] node {\it G2};
			\foreach \pos in {5.5} \draw[shift={(\pos,-1000pt)}] node {\it BFSG};
			\foreach \pos in {6.5} \draw[shift={(\pos,-400pt)}] node {\it TFSG};
			\foreach \pos in {7.5} \draw[shift={(\pos,-1000pt)}] node {\it CHLS};
			\foreach \pos in {8.5} \draw[shift={(\pos,-400pt)}] node {\it PEHVNA};
			\foreach \pos in {9.5} \draw[shift={(\pos,-1000pt)}] node {\it SFHWI};
			
			\foreach \pos in {0} \draw[shift={(\pos,0)}] node[below left] {$\pos$};
			\foreach \pos in {25, 50, 75, 100} \draw[shift={(0,\pos)}] (2pt,0pt) -- (-2pt,0pt) node[left] {$\pos$};
		\end{tikzpicture}
	\end{center}
	\caption{\em Success factors  for the instance with $d = 4$ and  the guest  graph of Figure~\ref{fig:noMinimumContinuousArrangementExistsGraph}.}
	\label{fig:successFactorOfTheLowerBoundAndOfTheHeuristicsForTheInstancesConsistingOfTheGraphInFigure3AndOfTheChoiceDEqual4}
\end{figure}
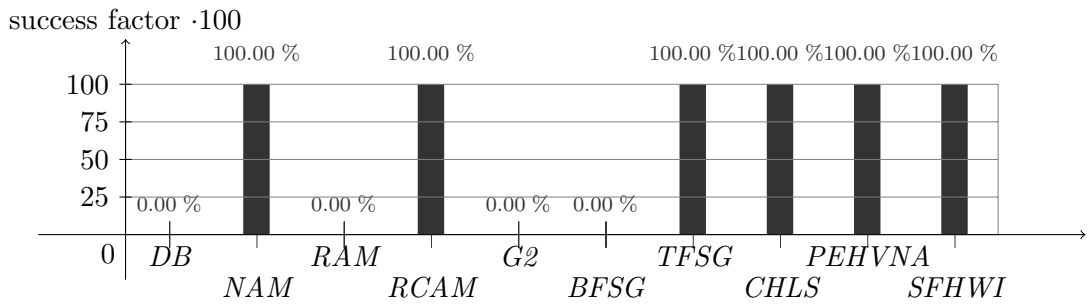

\begin{figure}[htbp]
	\begin{center}
		\begin{tikzpicture}[ycomb, xscale=1.16, yscale=0.02]
				
			\draw[color=black!80, line width=10pt]
				plot coordinates{(0.5000, 0.0000)(0.5000, 0.0000)} node[above] {\scriptsize $0.00 \; \%$};

			\draw[color=black!80, line width=10pt]
				plot coordinates{(1.5000, 0.0000)(1.5000, 0.0000)} node[above] {\scriptsize $0.00 \; \%$};

			\draw[color=black!80, line width=10pt]
				plot coordinates{(2.5000, 0.0000)(2.5000, 0.0000)} node[above] {\scriptsize $0.00 \; \%$};

			\draw[color=black!80, line width=10pt]
				plot coordinates{(3.5000, 0.0000)(3.5000, 0.0000)} node[above] {\scriptsize $0.00 \; \%$};

			\draw[color=black!80, line width=10pt]
				plot coordinates{(4.5000, 0.0000)(4.5000, 0.0000)} node[above] {\scriptsize $0.00 \; \%$};

			\draw[color=black!80, line width=10pt]
				plot coordinates{(5.5000, 0.0000)(5.5000, 0.0000)} node[above] {\scriptsize $0.00 \; \%$};

			\draw[color=black!80, line width=10pt]
				plot coordinates{(6.5000, 0.0000)(6.5000, 80.0000)} node[above] {\scriptsize $80.00 \; \%$};

			\draw[color=black!80, line width=10pt]
				plot coordinates{(7.5000, 0.0000)(7.5000, 10.0000)} node[above] {\scriptsize $10.00 \; \%$};

			\draw[color=black!80, line width=10pt]
				plot coordinates{(8.5000, 0.0000)(8.5000, 10.0000)} node[above] {\scriptsize $10.00 \; \%$};

			\draw[color=black!80, line width=10pt]
				plot coordinates{(9.5000, 0.0000)(9.5000, 0.0000)} node[above] {\scriptsize $0.00 \; \%$};

			\draw[very thin, color=gray, ystep=25, xstep=10] (0, 0) grid (10, 100);
			
			\draw[->] (-1, 0) -- (11, 0) node[right] {};
			\draw[->] (0, -30) -- (0, 130) node[above] {success factor $\cdot 100$};
			
			\foreach \pos in {0.5} \draw[shift={(\pos,-400pt)}] node {\it DB};
			\foreach \pos in {1.5} \draw[shift={(\pos,-1000pt)}] node {\it NAM};
			\foreach \pos in {2.5} \draw[shift={(\pos,-400pt)}] node {\it RAM};
			\foreach \pos in {3.5} \draw[shift={(\pos,-1000pt)}] node {\it RCAM};
			\foreach \pos in {4.5} \draw[shift={(\pos,-400pt)}] node {\it G2};
			\foreach \pos in {5.5} \draw[shift={(\pos,-1000pt)}] node {\it BFSG};
			\foreach \pos in {6.5} \draw[shift={(\pos,-400pt)}] node {\it TFSG};
			\foreach \pos in {7.5} \draw[shift={(\pos,-1000pt)}] node {\it CHLS};
			\foreach \pos in {8.5} \draw[shift={(\pos,-400pt)}] node {\it PEHVNA};
			\foreach \pos in {9.5} \draw[shift={(\pos,-1000pt)}] node {\it SFHWI};
			
			\foreach \pos in {0} \draw[shift={(\pos,0)}] node[below left] {$\pos$};
			\foreach \pos in {25, 50, 75, 100} \draw[shift={(0,\pos)}] (2pt,0pt) -- (-2pt,0pt) node[left] {$\pos$};
		\end{tikzpicture}
	\end{center}
	\caption{\em Success factors  for the instances with a  $d$-regular tree as a guest graph.}
	\label{fig:successFactorOfTheLowerBoundAndOfTheHeuristicsForTheInstanceWhoseGraphsHaveTheFormOfADRegularTree}
\end{figure}
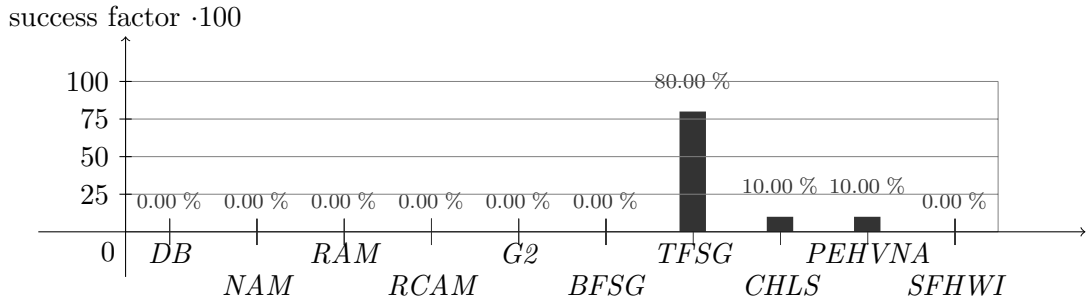

\begin{itemize}
	\item[(i)] Instances for which  the equality $d = n - 1$ holds.  The corresponding  success factors are given  in 
Figure~\ref{fig:successFactorOfTheLowerBoundAndOfTheHeuristicsForTheInstancesWhichHoldTheEqualityDEqualNMinus1}. 
It is interesting that  {\em CHLS}\/ yields the optimal solution for any  instance of this group, 
which  of course does not hold for all such DAPT  instances in general, cf.\ e.g.\ 
Figure~\ref{fig:successFactorOfTheLowerBoundAndOfTheHeuristicsForTheInstanceWhoseGraphsHaveTheFormOfADRegularTree}.
	\item[(ii)] Instances whose guest graph is a {\em star},  a {\em simple path}\/ or a {\em simple cycle}. 
Most of the heuristics return an  optimal solution.
	\item[(iii)] The instance with the guest graph of  Figure~\ref{fig:noMinimumContinuousArrangementExistsGraph} and $d = 4$. 
The corresponding  success factors are given  in 
Figure~\ref{fig:successFactorOfTheLowerBoundAndOfTheHeuristicsForTheInstancesConsistingOfTheGraphInFigure3AndOfTheChoiceDEqual4}. 
Note that neither the lower bound nor any heuristics is able to reach the optimum. 
Note also that some heuristics can generate a non-continuous arrangement. In our implementation they are RAM, CHLS and SFHWI.
	\item[(iv)] Instances with  a $d${\em-regular tree}\/ as a guest graph. 
No heuristic is able to return an  optimal arrangement for these instances and   {\em TFSG}\/ performs  mostly better than {\em CHLS}.
 The quality of the solutions is $q(\mathscr{I}, \mathscr{H}) \approx 1.18$. The corresponding   success factors are given in 
Figure~\ref{fig:successFactorOfTheLowerBoundAndOfTheHeuristicsForTheInstanceWhoseGraphsHaveTheFormOfADRegularTree}.
\end{itemize}

\subsection{Results on test instances with unknown  optimal solution}
Let us now consider the test instances with unknown  optimal solution, i.e.\ the optimal  solution of this instances  is not obtained by complete enumeration and it is not known whether it can be computed in polynomial time,  see Table~\ref{tab:summaryForTheInstancesWithoutAKnownOptimum}. The corresponding  success factors are given  in Figure~\ref{fig:successFactorOfTheLowerBoundAndOfTheHeuristicsForTheInstanceWithoutAKnownOptimum}.

\begin{figure}[htbp]
	\begin{center}
		\begin{tikzpicture}[ycomb, xscale=1.16, yscale=0.02]
				
			\draw[color=black!80, line width=10pt]
				plot coordinates{(0.5000, 0.0000)(0.5000, 0.0000)} node[above] {\scriptsize $0.00 \; \%$};

			\draw[color=black!80, line width=10pt]
				plot coordinates{(1.5000, 0.0000)(1.5000, 1.1905)} node[above] {\scriptsize $0.01 \; \%$};

			\draw[color=black!80, line width=10pt]
				plot coordinates{(2.5000, 0.0000)(2.5000, 0.0000)} node[above] {\scriptsize $0.00 \; \%$};

			\draw[color=black!80, line width=10pt]
				plot coordinates{(3.5000, 0.0000)(3.5000, 0.0000)} node[above] {\scriptsize $0.00 \; \%$};

			\draw[color=black!80, line width=10pt]
				plot coordinates{(4.5000, 0.0000)(4.5000, 1.1905)} node[above] {\scriptsize $0.01 \; \%$};

			\draw[color=black!80, line width=10pt]
				plot coordinates{(5.5000, 0.0000)(5.5000, 0.0000)} node[above] {\scriptsize $0.00 \; \%$};

			\draw[color=black!80, line width=10pt]
				plot coordinates{(6.5000, 0.0000)(6.5000, 1.1905)} node[above] {\scriptsize $0.01 \; \%$};

			\draw[color=black!80, line width=10pt]
				plot coordinates{(7.5000, 0.0000)(7.5000, 97.6190)} node[above] {\scriptsize $0.98 \; \%$};

			\draw[color=black!80, line width=10pt]
				plot coordinates{(8.5000, 0.0000)(8.5000, 2.3810)} node[above] {\scriptsize $0.02 \; \%$};

			\draw[color=black!80, line width=10pt]
				plot coordinates{(9.5000, 0.0000)(9.5000, 1.1905)} node[above] {\scriptsize $0.01 \; \%$};

			\draw[very thin, color=gray, ystep=25, xstep=10] (0, 0) grid (10, 100);
			
			\draw[->] (-1, 0) -- (11, 0) node[right] {};
			\draw[->] (0, -30) -- (0, 130) node[above] {success factor $\cdot 100$};
			
			\foreach \pos in {0.5} \draw[shift={(\pos,-400pt)}] node {\it DB};
			\foreach \pos in {1.5} \draw[shift={(\pos,-1000pt)}] node {\it NAM};
			\foreach \pos in {2.5} \draw[shift={(\pos,-400pt)}] node {\it RAM};
			\foreach \pos in {3.5} \draw[shift={(\pos,-1000pt)}] node {\it RCAM};
			\foreach \pos in {4.5} \draw[shift={(\pos,-400pt)}] node {\it G2};
			\foreach \pos in {5.5} \draw[shift={(\pos,-1000pt)}] node {\it BFSG};
			\foreach \pos in {6.5} \draw[shift={(\pos,-400pt)}] node {\it TFSG};
			\foreach \pos in {7.5} \draw[shift={(\pos,-1000pt)}] node {\it CHLS};
			\foreach \pos in {8.5} \draw[shift={(\pos,-400pt)}] node {\it PEHVNA};
			\foreach \pos in {9.5} \draw[shift={(\pos,-1000pt)}] node {\it SFHWI};
			
			\foreach \pos in {0} \draw[shift={(\pos,0)}] node[below left] {$\pos$};
			\foreach \pos in {25, 50, 75, 100} \draw[shift={(0,\pos)}] (2pt,0pt) -- (-2pt,0pt) node[left] {$\pos$};
		\end{tikzpicture}
	\end{center}
	\caption{\em Success factors  for the instances with an  unknown optimum.}
	\label{fig:successFactorOfTheLowerBoundAndOfTheHeuristicsForTheInstanceWithoutAKnownOptimum}
\end{figure}
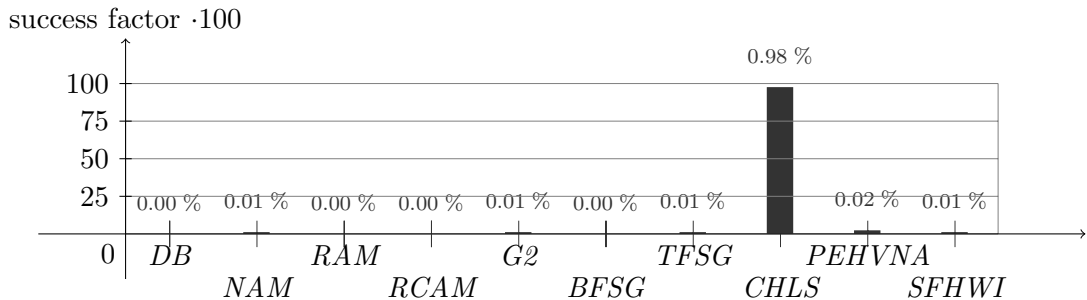

In the following we make some remarks on the particular classes of instances from this group.  
\smallskip

Consider first  the randomly generated instances (with  prefix {\em RG\_}). For all these instances  {\em CHLS}\/ outperforms  
the other heuristics.
 We  also observe  that for any fixed $d$  the quotients $q(\mathscr{I}, \mathscr{H})$ are better for denser graphs. 
The overall quality quotient for all  these instances is $q(\mathscr{I}, \mathscr{H}) \approx 1.21$. 
The quality quotient is better if $d = 2$; we get $q(\mathscr{I}, \mathscr{H}) \approx 1.20$ over the instances $\mathscr{I}$ with $d=2$  
and  $q(\mathscr{I}, \mathscr{H}) \approx 1.23$ for the other instances of this group. 

For the random instances ({\em RG\_randomAx}\/ and {\em RG\_randomBx}\/) we also observe an improvement of the quality quotient 
depending  on  the increasing expected density of  the guest  graph. 
Figure~\ref{fig:progressOfTheQualityDependingOnTheDensity} shows the values of  quality quotient computed for each pair of 
instances with guest graphs {\em RG\_randomAx}\/ and {\em RG\_randomBx}, for $x\in \{5,15,25,25,45,55,65,75,85,95\}$, and $d=2$ or $d=7$ 
respectively. Clearly $x$ represents the expected density of graphs generated as described in Section~\ref{random:subsect}. 

\medskip

Next  consider the instances with guest graphs taken  from  \textsc{Petit}~\cite{Pet03}. 
Let us notice that we have not considered  the guest  graphs  {\em Pet03\_crack}\/ with  $d = 2$ and have also excluded  {\em Pet03\_wave}\/ 
and {\em Pet03\_small}\/ as guest graphs from our tests.  
The reason is  the big size of  the guest graphs for the first two cases and the obtained solution by complete enumeration in the third case. 
The quality quotient  is  $q(\mathscr{I}, \mathscr{H}) \approx 1.84$ for this group of instances.
 For $d = 2$ we get $q(\mathscr{I}, \mathscr{H}) \approx 1.93$  and for $d = 7$ we get   $q(\mathscr{I}, \mathscr{H}) \approx 1.76$. 
Note that the quality quotient  is worse for these instances than for the {\em RG\_}\/ instances.

 A special behaviour could be observed on  following test instances:
\begin{itemize}
	\item The guest graph is given by  {\em Pet03\_hc10}\/ and $d = 2$. The underlying graph corresponds to a  
10-hypercube.
 Five  heuristics yield solutions with the same   objective function value which is the best know so far. 
It is worth of investigating  whether this  objective function value is optimal.

	\item The guest graph is given by {\em Pet03\_bintree10}\/ (a binary tree of height $10$) and $d=7$.
 This  problem  is polynomially solvable in the case that $d = 2$~\cite{CeSta13}. 
For $d \neq 2$ the computational complexity of this problem  is still open. 
We observe that  {\em TFSG}\/ performs  better than  {\em CHLS}\/ for both instances with the guest graph {\em Pet03\_bintree10}\/ 
and $d=7$ or $d=2$, respectively. 
\end{itemize}
\begin{figure}[htb]
	\begin{center}
		\begin{tikzpicture}[ycomb, xscale=0.1, yscale=4.4]
			\draw[very thin, color=gray, ystep=0.2, xstep=5] (0, 0) grid (100, 1.8);
			
			\draw[shift={(5, 1.791)}] (-10pt,-0.25pt) -- (10pt,0.25pt) (-10pt,0.25pt) -- (10pt,-0.25pt);
			\draw[shift={(15, 1.398)}] (-10pt,-0.25pt) -- (10pt,0.25pt) (-10pt,0.25pt) -- (10pt,-0.25pt);
			\draw[shift={(25, 1.270)}] (-10pt,-0.25pt) -- (10pt,0.25pt) (-10pt,0.25pt) -- (10pt,-0.25pt);
			\draw[shift={(35, 1.180)}] (-10pt,-0.25pt) -- (10pt,0.25pt) (-10pt,0.25pt) -- (10pt,-0.25pt);
			\draw[shift={(45, 1.134)}] (-10pt,-0.25pt) -- (10pt,0.25pt) (-10pt,0.25pt) -- (10pt,-0.25pt);
			\draw[shift={(55, 1.097)}] (-10pt,-0.25pt) -- (10pt,0.25pt) (-10pt,0.25pt) -- (10pt,-0.25pt);
			\draw[shift={(65, 1.060)}] (-10pt,-0.25pt) -- (10pt,0.25pt) (-10pt,0.25pt) -- (10pt,-0.25pt);
			\draw[shift={(75, 1.035)}] (-10pt,-0.25pt) -- (10pt,0.25pt) (-10pt,0.25pt) -- (10pt,-0.25pt);
			\draw[shift={(85, 1.018)}] (-10pt,-0.25pt) -- (10pt,0.25pt) (-10pt,0.25pt) -- (10pt,-0.25pt);
			\draw[shift={(95, 1.001)}] (-10pt,-0.25pt) -- (10pt,0.25pt) (-10pt,0.25pt) -- (10pt,-0.25pt);
		
			\draw[shift={(5, 1.689)}] (-10pt,-0.0pt) -- (10pt,0.0pt) (0pt,-0.25pt) -- (0pt,0.25pt);
			\draw[shift={(15, 1.378)}] (-10pt,-0.0pt) -- (10pt,0.0pt) (0pt,-0.25pt) -- (0pt,0.25pt);
			\draw[shift={(25, 1.245)}] (-10pt,-0.0pt) -- (10pt,0.0pt) (0pt,-0.25pt) -- (0pt,0.25pt);
			\draw[shift={(35, 1.200)}] (-10pt,-0.0pt) -- (10pt,0.0pt) (0pt,-0.25pt) -- (0pt,0.25pt);
			\draw[shift={(45, 1.177)}] (-10pt,-0.0pt) -- (10pt,0.0pt) (0pt,-0.25pt) -- (0pt,0.25pt);
			\draw[shift={(55, 1.164)}] (-10pt,-0.0pt) -- (10pt,0.0pt) (0pt,-0.25pt) -- (0pt,0.25pt);
			\draw[shift={(65, 1.156)}] (-10pt,-0.0pt) -- (10pt,0.0pt) (0pt,-0.25pt) -- (0pt,0.25pt);
			\draw[shift={(75, 1.119)}] (-10pt,-0.0pt) -- (10pt,0.0pt) (0pt,-0.25pt) -- (0pt,0.25pt);
			\draw[shift={(85, 1.076)}] (-10pt,-0.0pt) -- (10pt,0.0pt) (0pt,-0.25pt) -- (0pt,0.25pt);
			\draw[shift={(95, 1.047)}] (-10pt,-0.0pt) -- (10pt,0.0pt) (0pt,-0.25pt) -- (0pt,0.25pt);
			
			\draw[->] (-10, 0) -- (110, 0) node[right] {$\frac{2 m}{n (n - 1)} \cdot 100$};
			\draw[->] (0, -0.182) -- (0, 2.000) node[above] {$q(\mathscr{I}, \mathscr{H})$};
			
			\foreach \pos in {0} \draw[shift={(\pos,0)}] node[below left] {$\pos$};
			\foreach \pos in {5, 10, 15, 20, 25, 30, 35, 40, 45, 50, 55, 60, 65, 70, 75, 80, 85, 90, 95, 100} \draw[shift={(\pos, 0)}] (0pt,0.4pt) -- (0pt,-0.4pt) node[below] {$\pos$};
			\foreach \pos in {0.2, 0.4, 0.6, 0.8, 1.0, 1.2, 1.4, 1.6, 1.8} \draw[shift={(0,\pos)}] (17pt,0pt) -- (-17pt,0pt) node[left] {$\pos$};
			
			\draw[shift={(25.00, -0.40)}] (-10pt,-0.25pt) -- (10pt,0.25pt) (-10pt,0.25pt) -- (10pt,-0.25pt) node[right] {$d = 2$};
			\draw[shift={(75.00, -0.40)}] (-10pt,-0.0pt) -- (10pt,0.0pt) (0pt,-0.25pt) -- (0pt,0.25pt) node[right] {$d = 7$};
		\end{tikzpicture}
	\end{center}
	\caption{\em Progress of the quality quotient $q(\mathscr{I}, \mathscr{H})$ as a function of  the expected density of the randomly generated guest graphs.}
	\label{fig:progressOfTheQualityDependingOnTheDensity}
\end{figure}
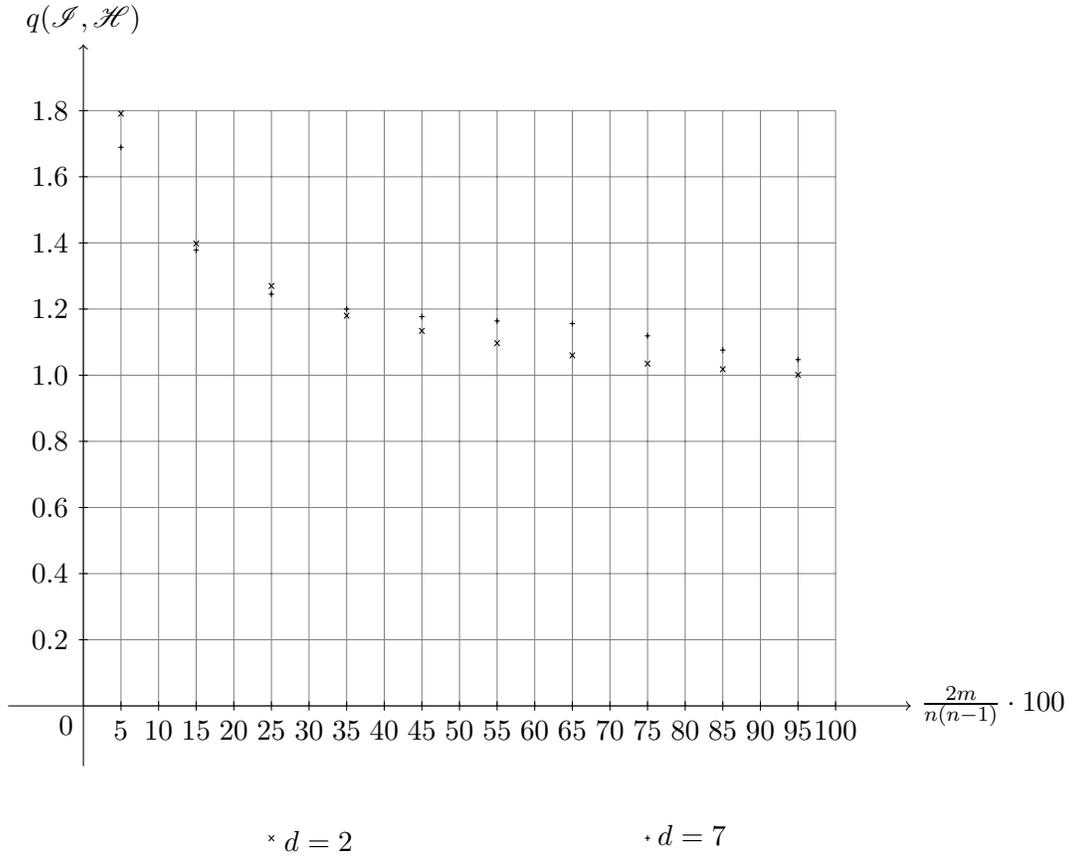

\subsection{Performance of the construction heuristic}\label{sub:construct}
In Tables~\ref{tab:summaryForTheInstancesSolvedByTheCompleteEnumeration}, \ref{tab:summaryForTheInstancesSolvedByAPolynomialAlgorithm} and 
\ref{tab:summaryForTheInstancesWithoutAKnownOptimum}  only the variant of the  construction heuristic which uses the simple  local search idea
  (see Section~\ref{subsub:MCBSSP}) to solve MCBSSP is included. 
This strategy outperforms the other one which uses the algorithm proposed by Feige, Krauthgamer and Nissim~\cite{Feige01} as a subroutine 
to solve MCBSSP. 
 Table~\ref{tab:ComparisonBetweenTwoStrategiesUsedToSolveMCSPBSS} provides some results on the comparison of the construction heuristic 
involving both approaches to solve MCBSSP, respectively. 
In this table there is only one instance  for which the involvement of the algorithm of Feige et al.\  yields better results. 
The guest graph of this instance is 2-regular  tree with $d=2$, hence this is  an instance of a special case of the DAPT 
solvable in polynomial time, see \cite{CeSta13}. 

\begin{center}
	\begin{longtable}{|l|r|r|r||r|r|r|}
				\hline
				\multicolumn{7}{|c|}{Table~\ref{tab:ComparisonBetweenTwoStrategiesUsedToSolveMCSPBSS} -- Two strategies in our construction heuristic}\\
				\hline
				{\bf Graph} & {\bf d} & {\bf n} & {\bf m} & {\bf OS} & {\bf LS} & {\bf FKN}\\
				\hline\hline
			\endfirsthead
				\hline
				\multicolumn{7}{|c|}{Table~\ref{tab:ComparisonBetweenTwoStrategiesUsedToSolveMCSPBSS} -- Two strategies in our construction heuristic}\\
				\hline
				{\bf Graph} & {\bf d} & {\bf n} & {\bf m} & {\bf OS} & {\bf LS} & {\bf FKN}\\
				\hline\hline
			\endhead
				\hline
				\multicolumn{7}{|c|}{follow-up on the next page $\dots$}\\
				\hline
			\endfoot
			\endlastfoot
SC\_random50 &
499
	& 500
		& 62468
			& {\bf 125374}
				& {\bf 125374}
					& {\bf 125374}\\
SC\_treeDG2H8 &
2
	& 511
		& 510
			& 2434
				& 3466
					& {\bf 3188}\\
SC\_treeDG3H6 &
3
	& 1093
		& 1092
			& 3926
				& {\bf 5042}
					& 5234\\
			\hline\hline
RG\_randomA5 &
2
	& 500
		& 6126
			& --
				& {\bf 86628}
					& 94660\\
RG\_randomA55 &
2
	& 500
		& 68320
			& --
				& {\bf 1074734}
					& 1091954\\
RG\_randomA95 &
2
	& 500
		& 118499
			& --
				& {\bf 1893354}
					& 1898482\\
			\hline
RG\_randomA5 &
7
	& 500
		& 6126
			& --
				& {\bf 36392}
					& 39344\\
RG\_randomA55 &
7
	& 500
		& 68320
			& --
				& {\bf 446038}
					& 452220\\
RG\_randomA95 &
7
	& 500
		& 118499
			& --
				& {\bf 785158}
					& 787360\\
			\hline\hline
Pet03\_randomA1 &
2
	& 1000
		& 4974
			& --
				& {\bf 71874}
					& 81264\\
Pet03\_hc10 &
2
	& 1024
		& 5120
			& --
				& {\bf 56320}
					& {\bf 56320}\\
Pet03\_c1y &
2
	& 828
		& 1749
			& --
				& {\bf 16884}
					& 21454\\
Pet03\_gd95c &
2
	& 62
		& 144
			& --
				& {\bf 866}
					& 1044\\
			\hline
Pet03\_randomA1 &
7
	& 1000
		& 4974
			& --
				& {\bf 29574}
					& 33980\\
Pet03\_hc10 &
7
	& 1024
		& 5120
			& --
				& {\bf 25892}
					& 27580\\
Pet03\_c1y &
7
	& 828
		& 1749
			& --
				& {\bf 7508}
					& 9016\\
Pet03\_gd95c &
7
	& 62
		& 144
			& --
				& {\bf 410}
					& 500\\
			\hline
		\multicolumn{7}{c}{\parbox{\LTcapwidth}{}}\\
		\caption{\em A comparison of the two approaches used to solve MCBSSP as a subroutine   in the  construction heuristic: the  local search idea (LS) and  the algorithm proposed by Feige et al.~\cite{Feige01} (FKN), see Section~\ref{subsub:MCBSSP}.}
		\label{tab:ComparisonBetweenTwoStrategiesUsedToSolveMCSPBSS}
	\end{longtable}
\end{center}

\section{Conclusions and outlook}
\label{sec:conclusionsAndOutlook}
In this paper we deal with the data arrangement problem on regular trees DAPT, 
identify some basic properties  and introduce heuristic approaches for this problem. 
We provide a comparative analysis of the proposed heuristics based on a set of test instances we have generated. 
To the best of our knowledge  no sources of literature dealing with heuristic approaches for the DAPT are available. 
So there is no possibility to test the performance of the proposed heuristics on already 
known benchmark instances and neither to compare the proposed heuristics to already existing approaches  in the literature. 
However we make use of test instances available in Petit~\cite{Pet03}  for a related 
problem, the linear arrangement problem,   and use these graphs as a guest graph in our test instances. We have summarised the generated test  instances in a   library which is available at  \url{http://www.opt.math.tu-graz.ac.at/~cela/public.htm}.

There is plenty of room for further research on this topic in the future. 
Most of the heuristics we propose are basis approaches which can be well combined with one another. Especially we expect a significant performance improvement if the two local search heuristics we propose are combined in order to may escape form the local minima of our neighbourhood by making a jump in the other neighbourhood. 
Also in the  construction heuristic there is room for improvement, especially as far as the subroutine used to solve MCBSSP is concerned. Since this problem has been investigated to some extent in the literature there is hope for appropriate approaches to make use of in the construction heuristic.  Another aspect which could be  considered is an alternative   handling of   the unused leaves. 
\section*{Acknowledgements}

The research was funded by the Austrian Science Fund (FWF): P23829.

\medskip

\newgeometry{top=2.039cm, bottom=2.039cm}

\begin{landscape}
	\pagestyle{empty}
	\scriptsize

\section*{Appendix}
\label{sec:appendix}
	
	\begin{center}
		\begin{longtable}{|l|r|r|r||r|r|r|r|r|r|r|r|r|Er|r|}
				\hline
				\multicolumn{16}{|c|}{Table~\ref{tab:summaryForTheInstancesSolvedByTheCompleteEnumeration} -- summary for the instances solved by the complete enumeration}\\
				\hline
				{\bf Graph} & {\bf d} & {\bf n} & {\bf m} & {\bf OS} & {\bf DB} & {\bf NAM} & {\bf RAM} & {\bf RCAM} & {\bf G2} & {\bf BFSG} & {\bf TFSG} & {\bf CHLS} & {\bf CHFKN} & {\bf PEHVNA} & {\bf SFHWI}\\
				\hline\hline
			\endfirsthead
				\hline
				\multicolumn{16}{|c|}{Table~\ref{tab:summaryForTheInstancesSolvedByTheCompleteEnumeration} -- summary for the instances solved by the complete enumeration -- follow up}\\
				\hline
				{\bf Graph} & {\bf d} & {\bf n} & {\bf m} & {\bf OS} & {\bf DB} & {\bf NAM} & {\bf RAM} & {\bf RCAM} & {\bf G2} & {\bf BFSG} & {\bf TFSG} & {\bf CHLS} & {\bf CHFKN} & {\bf PEHVNA} & {\bf SFHWI}\\
				\hline\hline
			\endhead
				\hline
				\multicolumn{16}{|c|}{follow-up on the next page $\dots$}\\
				\hline
			\endfoot
			\endlastfoot
CE\_sample &
3
	& 5
		& 7
			& {\bf 20}
				& 18
					& 22
						& {\bf 20}
							& {\bf 20}
								& 22
									& {\bf 20}
										& {\bf 20}
											& {\bf 20}
												& 22
													& {\bf 20}
														& {\bf 20}\\
			\hline
CE\_thin7 &
2
	& 7
		& 2
			& {\bf 24}
				& 21
					& 26
						& {\bf 24}
							& {\bf 24}
								& 26
									& {\bf 24}
										& {\bf 24}
											& {\bf 24}
												& 26
													& 26
														& {\bf 24}\\
CE\_thin7 &
3
	& 7
		& 2
			& {\bf 18}
				& 16
					& 20
						& {\bf 18}
							& {\bf 18}
								& 20
									& {\bf 18}
										& {\bf 18}
											& {\bf 18}
												& {\bf 18}
													& {\bf 18}
														& {\bf 18}\\
CE\_thin7 &
4
	& 7
		& 2
			& {\bf 16}
				& 14
					& {\bf 16}
						& 18
							& {\bf 16}
								& {\bf 16}
									& {\bf 16}
										& {\bf 16}
											& {\bf 16}
												& {\bf 16}
													& {\bf 16}
														& {\bf 16}\\
CE\_thin7 &
5
	& 7
		& 2
			& {\bf 16}
				& 14
					& 18
						& 18
							& {\bf 16}
								& 18
									& 18
										& 18
											& {\bf 16}
												& {\bf 16}
													& 18
														& {\bf 16}\\
CE\_thin7 &
6
	& 7
		& 2
			& {\bf 16}
				& 14
					& 18
						& 18
							& {\bf 16}
								& 18
									& {\bf 16}
										& {\bf 16}
											& {\bf 16}
												& {\bf 16}
													& {\bf 16}
														& {\bf 16}\\
CE\_thin7 &
7
	& 7
		& 2
			& {\bf 14}
				& {\bf 14}
					& {\bf 14}
						& {\bf 14}
							& {\bf 14}
								& {\bf 14}
									&  {\bf 14}
										& {\bf 14}
											& {\bf 14}
												& {\bf 14}
													& {\bf 14}
														& {\bf 14}\\
			\hline
CE\_dense7 &
2
	& 7
		& 14
			& {\bf 62}
				& 56
					& 64
						& {\bf 62}
							& {\bf 62}
								& 64
									& {\bf 62}
										& 66
											& {\bf 62}
												& 64
													& {\bf 62}
														& {\bf 62}\\
CE\_dense7 &
3
	& 7
		& 14
			& {\bf 44}
				& 42
					& {\bf 44}
						& {\bf 44}
							& {\bf 44}
								& {\bf 44}
									& {\bf 44}
										& 46
											& {\bf 44}
												& 46
													& {\bf 44}
														& {\bf 44}\\
CE\_dense7 &
4
	& 7
		& 14
			& {\bf 40}
				& 35
					& 42
						& {\bf 40}
							& {\bf 40}
								& 42
									& {\bf 40}
										& 44
											& {\bf 40}
												& 42
													& {\bf 40}
														& {\bf 40}\\
CE\_dense7 &
5
	& 7	
		& 14	
			& {\bf 38}
				& 30
					& 40
						& 40
							& {\bf 38}
								& {\bf 38}
									& {\bf 38}
										& {\bf 38}
											& {\bf 38}
												& {\bf 38}
													& {\bf 38}
														& {\bf 38}\\
CE\_dense7 &
6
	& 7	
		& 14	
			& {\bf 34}
				& 28
					& 36
						& 38
							& {\bf 34}
								& {\bf 34}
									& {\bf 34}
										& {\bf 34}
											& {\bf 34}
												& {\bf 34}
													& {\bf 34}
														& {\bf 34}\\
CE\_dense7 &
7
	& 7
		& 14
			& {\bf 28}
				& {\bf 28}
					& {\bf 28}
						& {\bf 28}
							& {\bf 28}
								& {\bf 28}
									& {\bf 28}
										& {\bf 28}
											& {\bf 28}
												& {\bf 28}
													& {\bf 28}
														& {\bf 28}\\
			\hline
CE\_mesh9 &
2
	& 9
		& 12
			& {\bf 54}
				& 40
					& 58
						& 62
							& 56
								& 58
									& 62
										& {\bf 54}
											& {\bf 54}
												& {\bf 54}
													& {\bf 54}
														& {\bf 54}\\
CE\_mesh9 &
3
	& 9	
		& 12
			& {\bf 36}
				& 30
					& {\bf 36}
						& {\bf 36}
							& {\bf 36}
								& {\bf 36}
									& {\bf 36}
										& {\bf 36}
											& {\bf 36}
												& {\bf 36}
													& {\bf 36}
														& {\bf 36}\\
CE\_mesh9 &
4
	& 9	
		& 12
			& {\bf 34}
				& 25
					& 36
						& 36
							& {\bf 34}
								& 36
									& 36
										& {\bf 34}
											& {\bf 34}
												& {\bf 34}
													& {\bf 34}
														& {\bf 34}\\
			\hline
CE\_thin10 &
2
	& 10
		& 11
			& {\bf 46}
				& 34
					& 62
						& 54
							& 48
								& 60
									& 56
										& {\bf 46}
											& {\bf 46}
												& 56
													& 48
														& 48\\
CE\_thin10 &
4
	& 10
		& 11
			& {\bf 30}
				& 22
					& 34
						& 32
							& {\bf 30}
								& 34
									& 32
										& {\bf 30}
											& {\bf 30}
												& 32
													& {\bf 30}
														& {\bf 30}\\
			\hline
CE\_dense10 &
2
	& 10
		& 26
			& {\bf 134}
				& 118
					& 140
						& 150
							& {\bf 134}
								& 136
									& {\bf 134}
										& 140
											& {\bf 134}
												& 136
													& {\bf 134}
														& 136\\
CE\_dense10 &
4
	& 10
		& 26
			& {\bf 80}
				& 74
					& 82
						& 86
							& {\bf 80}
								& {\bf 80}
									& {\bf 80}
										& 86
											& {\bf 80}
												& 86
													& 82
														& {\bf 80}\\
			\hline
Pet03\_small &
2
	& 5
		& 8
			& {\bf 34}
				& 28
					& 36
						& {\bf 34}
							& {\bf 34}
								& {\bf 34}
									& {\bf 34}
										& {\bf 34}
											& {\bf 34}
												& {\bf 34}
													& {\bf 34}
														& {\bf 34}\\
Pet03\_small &
3
	& 5
		& 8
			& {\bf 24}
				& 22
					& 26
						& {\bf 24}
							& {\bf 24}
								& {\bf 24}
									& {\bf 24}
										& {\bf 24}
											& {\bf 24}
												& {\bf 24}
													& {\bf 24}
														& {\bf 24}\\
			\hline
			\multicolumn{16}{c}{\parbox{\LTcapwidth}{}}\\
			\caption{\em Summary for the instances solved by the complete enumeration.}
			\label{tab:summaryForTheInstancesSolvedByTheCompleteEnumeration}
		\end{longtable}
	\end{center}
	
	\vspace*{-0.7cm}

	\begin{center}
		\begin{longtable}{|l|r|r|r||r|r|r|r|r|r|r|r|r|Er|r|}
				\hline
				\multicolumn{16}{|c|}{Table~\ref{tab:summaryForTheInstancesSolvedByAPolynomialAlgorithm} -- summary for the instances solved by a polynomial time algorithm}\\
				\hline
				{\bf Graph} & {\bf d} & {\bf n} & {\bf m} & {\bf OS} & {\bf DB} & {\bf NAM} & {\bf RAM} & {\bf RCAM} & {\bf G2} & {\bf BFSG} & {\bf TFSG} & {\bf CHLS} & {\bf CHFKN} & {\bf PEHVNA} & {\bf SFHWI}\\
				\hline\hline
			\endfirsthead
				\hline
				\multicolumn{16}{|c|}{Table~\ref{tab:summaryForTheInstancesSolvedByAPolynomialAlgorithm} -- summary for the instances solved by a polynomial time algorithm -- follow up}\\
				\hline
				{\bf Graph} & {\bf d} & {\bf n} & {\bf m} & {\bf OS} & {\bf DB} & {\bf NAM} & {\bf RAM} & {\bf RCAM} & {\bf G2} & {\bf BFSG} & {\bf TFSG} & {\bf CHLS} & {\bf CHFKN} & {\bf PEHVNA} & {\bf SFHWI}\\
				\hline\hline
			\endhead
				\hline
				\multicolumn{16}{|c|}{follow-up on the next page $\dots$}\\
				\hline
			\endfoot
			\endlastfoot
SC\_random25 &
499
	& 500
		& 31239
			& {\bf 62644}
				& 62478
					& 62720
						& 124774
							& 62740
								& 62644
									& {\bf 62644}
										& 62704
											& {\bf 62644}
												& {\bf 62644}
													& 62706
														& 62706\\
SC\_random50 &
499
	& 500
		& 62468
			& {\bf 125374}
				& 124936
					& 125438
						& 249544
							& 125410
								& 125374
									& 125380
										& 125418
											& {\bf 125374}
												& {\bf 125374}
													& 125420
														& 125438\\
SC\_random75 &
499
	& 500
		& 93548
			& {\bf 187784}
				& 187096
					& 187832
						& 373724
							& 187822
								& 187784
									& 187796
										& 187840
											& {\bf 187784}
												& {\bf 187784}
													& 187824
														& 187832\\
			\hline											
SC\_star50 &
2
	& 50
		& 49
			& {\bf 474}
				& 286
					& {\bf 474}
						& 478
							& {\bf 474}
								& {\bf 474}
									& {\bf 474}
										& {\bf 474}
											& {\bf 474}
												& {\bf 474}
													& {\bf 474}
														& {\bf 474}\\
SC\_star500 &
2
	& 500
		& 499
			& {\bf 7978}
				& 4488
					& {\bf 7978}
						& 7980
							& {\bf 7978}
								& {\bf 7978}
									& {\bf 7978}
										& {\bf 7978}
											& {\bf 7978}
												& {\bf 7978}
													& {\bf 7978}
														& {\bf 7978}\\
SC\_star1000 &
2
	& 1000
		& 999
			& {\bf 17954}
				& 9976
					& {\bf 17954}
						& 17968
							& {\bf 17954}
								& {\bf 17954}
									& {\bf 17954}
										& {\bf 17954}
											& {\bf 17954}
												& {\bf }
													& {\bf 17954}
														& {\bf 17954}\\
			\hline											
SC\_star50 &
7
	& 50
		& 49
			& {\bf 186}
				& 142
					& {\bf 186}
						& 262
							& {\bf 186}
								& {\bf 186}
									& {\bf 186}
										& {\bf 186}
											& {\bf 186}
												& {\bf 186}
													& {\bf 186}
														& {\bf 186}\\
SC\_star500 &
7
	& 500
		& 499
			& {\bf 3200}
				& 2099
					& {\bf 3200}
						& 3770
							& {\bf 3200}
								& {\bf 3200}
									& {\bf 3200}
										& {\bf 3200}
											& {\bf 3200}
												& {\bf 3200}
													& {\bf 3200}
														& {\bf 3200}\\
SC\_star1000 &
7
	& 1000
		& 999
			& {\bf 7200}
				& 4599
					& {\bf 7200}
						& 7600
							& {\bf 7200}
								& {\bf 7200}
									& {\bf 7200}
										& {\bf 7200}
											& {\bf 7200}
												& {\bf 7200}
													& {\bf 7200}
														& {\bf 7200}\\
			\hline
SC\_extStar &
4
	& 12
		& 11
			& 28
				& 23
					& {\bf 30}
						& 32
							& {\bf 30}
								& 32
									& 32
										& {\bf 30}
											& {\bf 30}
												& {\bf 30}
													& {\bf 30}
														& {\bf 30}\\
			\hline
SC\_treeDG2H8 &
2
	& 511
		& 510
			& 2434
				& 1529
					& 8176
						& 7968
							& 7982
								& 8176
									& 7680
										& {\bf 2746}
											& 3466
												& 3188
													& 4878
														& 6426\\
Pet03\_bintree10 &
2
	& 1023
		& 1022
			& 4904
				& 3065
					& 18414
						& 18146
							& 18118
								& 18414
									& 17410
										& {\bf 5618}
											& 7072
												& 6566
													& 10696
														& 15030\\
SC\_treeDG2H10 &
2
	& 2047
		& 2046
			& 9850
				& 6137
					& 40940
						& 45508
							& 40542
								& 40940
									& 38914
										& {\bf 11418}
											& 16026
												& --
													& 23566
														& 34938\\
SC\_treeDG2H11 &
2
	& 4095
		& 4094
			& 19744
				& 12281
					& 90090
						& 89568
							& 89528
								& 90090
									& 86020
										& {\bf 23154}
											& 33748
												& --
													& 50826
														& 80064\\
SC\_treeDG2H12 &
2
	& 8191
		& 8190
			& 39538
				& 24569
					& 196584
						& 195714
							& 195668
								& 196584
									& 188420
										& {\bf 46930}
											& 71666
												& --
													& 109504
														& 174972\\
SC\_treeDG3H5 &
3
	& 364
		& 363
			& 1296
				& 967
					& 3640
						& 3870
							& 3642
								& 3640
									& 3640
										& 1524
											& {\bf 1472}
												& 1646
													& 2376
														& 2778\\
SC\_treeDG3H6 &
3
	& 1093
		& 1092
			& 3926
				& 2911
					& 13116
						& 14040
							& 13214
								& 13116
									& 13116
										& {\bf 4772}
											& 5042
												& 5234
													& 8380
														& 11314\\
SC\_treeDG4H4 &
4
	& 341
		& 340
			& 1058
				& 849
					& 2728
						& 3102
							& 2738
								& 2728
									& 2728
										& {\bf 1288}
											& 1328
												& 1346
													& 1584
														& 2144\\
SC\_treeDG4H5 &
4
	& 1365
		& 1364
			& 4272
				& 3409
					& 1365
						& 15310
							& 13884
								& 13650
									& 13650
										& {\bf 5320}
											& 5412
												&
													& 7626
														& 11820\\
SC\_treeDG8H3 &
8
	& 585
		& 584
			& 1472
				& 1313
					& 3510
						& 4426
							& 3526
								& 3510
									& 3510
										& 1956
											& 1808
												&
													& {\bf 1716}
														& 3018\\
			\hline
SC\_path50 &
2
	& 50
		& 49
			& {\bf 190}
				& 146
					& {\bf 190}
						& 434
							& 434
								& {\bf 190}
									& {\bf 190}
										& {\bf 190}
											& {\bf 190}
												& {\bf 190}
													& {\bf 190}
														& {\bf 190}\\
SC\_path500 &
2
	& 500
		& 499
			& {\bf 1982}
				& 1496
					& {\bf 1982}
						& 7818
							& 7814
								& {\bf 1982}
									& {\bf 1982}
										& {\bf 1982}
											& {\bf 1982}
												& {\bf 1982}
													& {\bf 1982}
														& {\bf 1982}\\
SC\_path1000 &
2
	& 1000
		& 999
			& {\bf 3980}
				& 2996
					& {\bf 3980}
						& 17706
							& 17726
								& {\bf 3980}
									& {\bf 3980}
										& {\bf 3980}
											& {\bf 3980}
												& {\bf 3980}
													& {\bf 3980}
														& {\bf 3980}\\
			\hline
SC\_path50 &
7
	& 50
		& 49
			& {\bf 114}
				& 98
					& {\bf 114}
						& 258
							& 180
								& {\bf 114}
									& {\bf 114}
										& {\bf 114}
											& {\bf 114}
												& {\bf 114}
													& {\bf 114}
														& {\bf 114}\\
SC\_path500 &
7
	& 500
		& 499
			& {\bf 1162}
				& 998
					& {\bf 1162}
						& 3754
							& 3260
								& {\bf 1162}
									& {\bf 1162}
										& {\bf 1162}
											& {\bf 1162}
												& {\bf 1162}
													& {\bf 1162}
														& {\bf 1162}\\
SC\_path1000 &
7
	& 1000
		& 999
			& {\bf 2326}
				& 1998
					& {\bf 2326}
						& 7562
							& 7114
								& {\bf 2326}
									& {\bf 2326}
										& {\bf 2326}
											& {\bf 2326}
												& {\bf 2326}
													& {\bf 2326}
														& {\bf 2326}\\
			\hline
SC\_simpleCycle50 &
2
	& 50
		& 50
			& {\bf 202}
				& 150
					& {\bf 202}
						& 436
							& 452
								& 284
									& 284
										& {\bf 202}
											& {\bf 202}
												& {\bf 202}
													& {\bf 202}
														& {\bf 202}\\
SC\_simpleCycle500 &
2
	& 500
		& 500
			& {\bf 2000}
				& 1500
					& {\bf 2000}
						& 7782
							& 7804
								& 2968
									& 2968
										& {\bf 2000}
											& {\bf 2000}
												& {\bf 2000}
													& {\bf 2000}
														& {\bf 2000}\\
SC\_simpleCycle1000 &
2
	& 1000
		& 1000
			& {\bf 4000}
				& 3000
					& {\bf 4000}
						& 17742
							& 17746
								& 5964
									& 5964
										& {\bf 4000}
											& {\bf 4000}
												& {\bf 4000}
													& {\bf 4000}
														& {\bf 4000}\\
			\hline
SC\_simpleCycle50 &
7
	& 50
		& 50
			& {\bf 120}
				& 100
					& {\bf 120}
						& 258
							& 178
								& 132
									& 132
										& {\bf 120}
											& {\bf 120}
												& {\bf 120}
													& {\bf 120}
														& {\bf 120}\\
SC\_simpleCycle500 &
7
	& 500
		& 500
			& {\bf 1170}
				& 1000
					& {\bf 1170}
						& 3770
							& 3256
								& 1328
									& 1328
										& {\bf 1170}
											& {\bf 1170}
												& {\bf 1170}
													& {\bf 1170}
														& {\bf 1170}\\
SC\_simpleCycle1000 &
7
	& 1000
		& 1000
			& {\bf 2334}
				& 2000
					& {\bf 2334}
						& 7576
							& 7128
								& 2656 
									& 2656
										& {\bf 2334}
											& {\bf 2334}
												& {\bf 2334}
													& {\bf 2334}
														& {\bf 2334}\\
			\hline
			\multicolumn{16}{c}{\parbox{\LTcapwidth}{}}\\
			\caption{\em Summary for the instances solved by a polynomial time algorithm.}
			\label{tab:summaryForTheInstancesSolvedByAPolynomialAlgorithm}
		\end{longtable}
	\end{center}
	
	\vspace*{-0.7cm}

	\begin{center}
		\begin{longtable}{|l|r|r|r||r|r|r|r|r|r|r|r|r|Er|r|}
				\hline
				\multicolumn{16}{|c|}{Table~\ref{tab:summaryForTheInstancesWithoutAKnownOptimum} -- summary for the instances without a known optimal solution} \\
				\hline
				{\bf Graph} & {\bf d} & {\bf n} & {\bf m} & {\bf OS} & {\bf DB} & {\bf NAM} & {\bf RAM} & {\bf RCAM} & {\bf G2} & {\bf BFSG} & {\bf TFSG} & {\bf CHLS} & {\bf CHFKN} & {\bf PEHVNA} & {\bf SFHWI}\\
				\hline\hline
			\endfirsthead
				\hline
				\multicolumn{16}{|c|}{Table~\ref{tab:summaryForTheInstancesWithoutAKnownOptimum} -- summary for the instances without a known optimal solution -- follow up} \\
				\hline
				{\bf Graph} & {\bf d} & {\bf n} & {\bf m} & {\bf OS} & {\bf DB} & {\bf NAM} & {\bf RAM} & {\bf RCAM} & {\bf G2} & {\bf BFSG} & {\bf TFSG} & {\bf CHLS} & {\bf CHFKN} & {\bf PEHVNA} & {\bf SFHWI}\\
				\hline\hline
			\endhead
				\hline
				\multicolumn{16}{|c|}{follow-up on the next page $\dots$} \\
				\hline
			\endfoot
			\endlastfoot
RG\_randomA5 &
2	& 500	& 6126	& --& 48361	& 98140	& 97640 	& 97566	& 92076	& 96816	& 98004	& {\bf 86628}	& 94660	& 89684	& 93230\\
RG\_randomB5 &
2	& 500	& 6175	& --& 48880	& 99288	& 98334		& 98344	& 92836	& 97556	& 98610	& {\bf 87540}	& 95534	& 90562	& 94990\\
RG\_randomA15 &
2	& 500	& 18654	& --& 201234& 299322& 298002	& 297740& 290074& 297304& 298738& {\bf 281686}	& 295470& 286346& 292550\\
RG\_randomB15 &
2	& 500	& 18887	& --& 204529& 302016& 301636	& 301704& 293428& 300738& 302536& {\bf 285518}	& 298684& 290026& 295792\\
RG\_randomA25 &
2	& 500	& 31254	& --& 379064& 500784& 499788	& 499738& 490796& 499032& 500724& {\bf 480966}	& 497080& 486306& 495964\\
RG\_randomB25 &
2	& 500	& 31114	& --& 376931& 497684& 497456	& 497524& 487788& 496732& 498384& {\bf 478812}	& 494338& 483780& 493214\\
RG\_randomA35 &
2	& 500	& 43605	& --& 574180& 699352& 697740	& 697594& 687438& 697148& 698420& {\bf 677870}	& 695098& 683514& 691738\\
RG\_randomB35 &
2	& 500	& 43595	& --& 574020& 699236& 697246	& 697540& 686808& 696756& 698494& {\bf 677294}	& 695248& 683192& 690294\\
RG\_randomA45 &
2	& 500	& 56653	& --& 782958& 908042& 907104	& 906882& 896474& 906354& 907852& {\bf 886786}	& 904142& 892358& 899844\\
RG\_randomB45 &
2	& 500	& 55627	& --& 766539& 891646& 890022	& 890094& 879572& 889732& 891178& {\bf 870416}	& 887984& 876214& 884740\\
RG\_randomA55 &
2	& 500	& 68320	& --& 978888& 1095540
										& 1093664	& 1093718
															& 1083122
																	& 1093128
																			& 1094468& {\bf 1074734}& 1091954& 1079810& 1090610\\
RG\_randomB55 &
2	& 500	& 68701	& --& 985749& 1101882
										& 1100048	& 1099958
															& 1089652
																	& 1099576
																			& 1101090& {\bf 1080722}& 1097676& 1085512& 1094074\\
RG\_randomA65 &
2	& 500	& 81279	& --& 1212022
								& 1302766
										& 1301848	& 1301356
															& 1291892
																	& 1300882
																			& 1302240& {\bf 1284086}& 1300032& 1288564& 1295348\\
RG\_randomB65 &
2	& 500	& 81172	& --& 1210096
								& 1301186
										& 1300226	& 1299860
															& 1289980
																	& 1299550
																			& 1300660& {\bf 1282456}& 1298326& 1286966& 1293572\\
RG\_randomA75 &
2	& 500	& 93347	& --& 1429246
								& 1496336
										& 1495498	& 1495320
															& 1486398
																	& 1495054
																			& 1495980& {\bf 1479758}& 1493802& 1484062& 1490928\\
RG\_randomB75 &
2	& 500	& 93399	& --& 1430182
								& 1497266
										& 1496448	& 1495956
															& 1487202
																	& 1496172
																			& 1497070& {\bf 1480906}& 1494338& 1484748& 1492458\\
RG\_randomA85 &
2	& 500	& 106047& --& 1657846
								& 1699742
										& 1699524	& 1699104
															& 1692306
																	& 1698948
																			& 1699578& {\bf 1687470}& 1698310& 1690196& 1693914\\
RG\_randomB85 &
2	& 500	& 106111& --& 1658998
								& 1700626
										& 1700554	& 1700062
															& 1692992
																	& 1699822
																			& 1700268& {\bf 1688546}& 1698760& 1691352& 1697034\\
RG\_randomA95 &
2	& 500	& 118499& --& 1881982
								& 1899982
										& 1899538	& 1899100
															& 1895412
																	& 1899376
																			& 1899696& {\bf 1893354}& 1898482& 1894696& 1897084\\
RG\_randomB95 &
2	& 500	& 118606& --& 1883908
								& 1900908
										& 1901264	& 1900678
															& 1897246
																	& 1900698
																		 	& 1900804& {\bf 1895088}& 1900376& 1896222& 1898688\\
			\hline
RG\_randomA5 &
7	& 500	& 6126	& --& 21504	& 40774	& 46738	& 40522	& 38070	& 39990	& 40518	& {\bf 36392}	& 39344	& 37494 & 39780\\
RG\_randomB5 &
7	& 500	& 6175	& --& 21700	& 41204	& 47124	& 40886	& 38358	& 40222	& 40816	& {\bf 36570}	& 39498	& 37794	& 40504\\
RG\_randomA15 &
7	& 500	& 18654	& --& 84924	& 124204& 142714& 123766& 119752& 123254& 124042& {\bf 117348}	& 122284& 118986& 124010\\
RG\_randomB15 &
7	& 500	& 18887	& --& 86322	& 125500& 144464& 125316& 121384& 124626& 125386& {\bf 118666}	& 123678& 120570& 125044\\
RG\_randomA25 &
7	& 500	& 31254	& --& 160524& 207686& 239158& 207686& 203046& 206856& 207712& {\bf 199806}	& 205834& 201802& 207458\\
RG\_randomB25 &
7	& 500	& 31114	& --& 159684& 206620& 238134& 206588& 201778& 205850& 206552& {\bf 198944}	& 204876& 201050& 206506\\
RG\_randomA35 &
7	& 500	& 43605	& --& 234630& 289994& 333908& 289584& 284438& 288968& 289716& {\bf 281458}	& 287540& 283704& 289994\\
RG\_randomB35 &
7	& 500	& 43595	& --& 234570& 290000& 333834& 289650& 284588& 288862& 289652& {\bf 281502}	& 287806& 283608& 290000\\
RG\_randomA45 &
7	& 500	& 56653	& --& 312918& 376680& 433734& 376316& 371212& 375824& 376474& {\bf 368306}	& 374474& 370416& 376604\\
RG\_randomB45 &
7	& 500	& 55627	& --& 306762& 369694& 426026& 369460& 364486& 368858& 369702& {\bf 361318}	& 367498& 363534& 369646\\
RG\_randomA55 &
7	& 500	& 68320	& --& 382920& 454306& 523376& 454142& 448710& 453224& 453876& {\bf 446038}	& 452220& 448168& 454306\\
RG\_randomB55 &
7	& 500	& 68701	& --& 385206& 456812& 526276& 456536& 451350& 455894& 456454& {\bf 448314}	& 454596& 450618& 456812\\
RG\_randomA65 &
7	& 500	& 81279	& --& 460795& 540234& 622664& 540098& 535146& 539482& 540146& {\bf 532718}	& 538368& 534564& 540234\\
RG\_randomB65 &
7	& 500	& 81172	& --& 460159& 539620& 621798& 539254& 534380& 538760& 539478& {\bf 532046}	& 538248& 534046& 539620\\
RG\_randomA75 &
7	& 500	& 93347	& --& 548776& 620442& 715076& 620622& 616240& 619842& 620570& {\bf 613984}	& 619312& 615588& 620442\\
RG\_randomB75 &
7	& 500	& 93399	& --& 549192& 621312& 715644& 620878& 616030& 620392& 620938& {\bf 614182}	& 619242& 616400& 621312\\
RG\_randomA85 &
7	& 500	& 106047& --& 650376& 705130& 812572& 705052& 701458& 704718& 704864& {\bf 699976}	& 703688& 701318& 705130\\
RG\_randomB85 &
7	& 500	& 106111& --& 650888& 705676& 812948& 705388& 701914& 705162& 705472& {\bf 700330}	& 704000& 701820& 705286\\
RG\_randomA95 &
7	& 500	& 118499& --& 749992& 788218& 907730& 787964& 786042& 787888& 788060& {\bf 785158}	& 787360& 785984& 788214\\
RG\_randomB95 &
7	& 500	& 118606& --& 750848& 788782& 908742& 788638& 786674& 788496& 788652& {\bf 785996}	& 788036& 786804& 788642\\
			\hline\hline
Pet03\_randomA1 &
2	& 1000	& 4974	& --& 29154	& 89750	& 88988	& 88944	& 80096	& 87408	& 86806	& {\bf 71874}	& 81264	& 75440	& 86824\\
Pet03\_randomA2 &
2	& 1000	& 24738	& --& 239917& 446298& 444500& 444384& 426856& 442944& 445890& {\bf 411242}	& 438108& 42540	& 443786\\
Pet03\_randomA3 &
2	& 1000	& 49820	& --& 577482& 897992& 895654& 895760& 873202& 894196& 897554& {\bf 852854}	& 		& 864912& 894160\\
Pet03\_randomA4 &
2	& 1000	& 8177	& --& 56759 & 147424& 146646& 146528& 135366& 145032& 146066& {\bf 125374}	&		& 130530& 144756\\
Pet03\_randomG4 &
2	& 1000	& 8173	& --& 56961	& 147164& 146030& 146536& 98482	& 93990	& 96648	& {\bf 74282}	&		& 106720& 135804\\
Pet03\_hc10 &
2	& 1024	& 5120	& --& 29696	& {\bf 56320}
										& 91468	& 91672	& {\bf 56320}
																& 88684	& 84266	& {\bf 56320}	& {\bf 56320}
																										& {\bf 56320}
																												& {\bf 56320}\\
Pet03\_mesh33x33 &
2	& 1089	& 2112	& --& 8320	& 18942	& 41788	& 38714	& 19350	& 26152	& 23268	& {\bf 18722}	& 		& 18900	& 18904\\
Pet03\_3elt &
2	& 4720	& 13722	& --& 63462	& 169346& 328306& 313178& 189096& 174530& 220584& {\bf 120332}	& --	& 132904& 168708\\
Pet03\_airfoil1 &
2	& 4253	& 12289	& --& 56732	& 148896& 293980& 273364& 165388& 153078& 188894& {\bf 106416}	& --	& 119024& 148382\\
{\color{grey} Pet03\_crack} &
{\color{grey} 2}
	& {\color{grey} 10240}
		& {\color{grey} 30380}
			& {\color{grey} --}
				& {\color{grey} 145618}
					& {\color{grey} 726664}
						& {\color{grey} 788288}
							& {\color{grey} 762606}
								& {\color{grey} 426592}
									& {\color{grey} 393516}
										& {\color{grey} 497116}
											& {\color{grey} --}
												& {\color{grey} --}
													& {\color{grey} 441042}
														& {\color{grey} 696122}\\
Pet03\_whitaker3 &
2	& 9800	& 28989	& --& 134741& 375730& 752132& 723426& 371946& 355240& 492390& {\bf 301320}	& --	& 335630& 375298\\
Pet03\_big &
2	& 15606	& 45878	& --& 212875& 650814& 1190894
												& 1189886
														& 726976& 650746& 830690& {\bf 436908}	& --	& 482936& 649748\\
{\color{grey} Pet03\_wave} &
{\color{grey} 2}
	& {\color{grey} 156317}
		& {\color{grey} 1059331}
			& {\color{grey} --}
				& {\color{grey} 6884189}
					& {\color{grey} 21067766}
						& {\color{grey} 36008138}
							& {\color{grey} 34921840}
								& {\color{grey} 23977688}
									& {\color{grey} 21016364}
										& {\color{grey} 23484964}
											& {\color{grey} --}
												& {\color{grey} --}
													& {\color{grey} --}
														& {\color{grey} \bf 20711426}\\
Pet03\_c1y &
2	& 828	& 1749	& --& 8609	& 24712	& 31192	& 30752	& 21392	& 26068	& 22924	& {\bf 16884}	& 21454	& 19846	& 23174\\
Pet03\_c2y &
2	& 980	& 2102	& --& 10246	& 29726	& 37394	& 37392	& 25282	& 30232	& 26722	& {\bf 20478}	& 26246	& 24110	& 11592\\
Pet03\_c3y &
2	& 1327	& 2844	& --& 13578	& 41996	& 56492	& 54426	& 35066	& 44664	& 38692	& {\bf 28810}	&		& 33736	& 33736\\
Pet03\_c4y &
2	& 1366	& 2915	& --& 13529	& 43490	& 57848	& 56106	& 37034	& 44186	& 38306	& {\bf 27930}	&		& 34124	& 42814\\
Pet03\_c5y &
2	& 1202	& 2577	& --& 12120	& 37636	& 50712	& 48414	& 32894	& 37942	& 33524	& {\bf 25572}	&		& 29328	& 35858\\
Pet03\_gd95c &
2	& 62	& 144	& --& 643	& 1016	& 1384	& 1354	& 1080	& 1024	& 1002	& {\bf 866}		& 1044	& 916	& 920\\
Pet03\_gd96a &
2	& 1096	& 1676	& --& 7021	& 30310	& 33124	& 30774	& 23050	& 27908	& 19060	& {\bf 18004}	&		& 19926	& 28526\\
Pet03\_gd96b &
2	& 111	& 193	& --& 971	& 2410	& 2246	& 2200	& 1762	& 1820	& 1768	& {\bf 1486}	& 1636	& 1760	& 1576\\
Pet03\_gd96c &
2	& 65	& 125	& --& 495 	& 1276	& 1394	& 1236	& 882	& 936	& 964	& {\bf 824}		& 972	& 950	& 844\\
Pet03\_gd96d &
2	& 180	& 228	& --& 1002	& 2446	& 3070	& 2952	& 2592	& 2802	& 2050	& {\bf 1822}	& 2038	& 2054	& 2024\\
			\hline
Pet03\_randomA1 &
7	& 1000	& 4974	& --& 14006	& 35988	& 37952	& 35680	& 32204	& 35032	& 35120	& {\bf 29574}	& 33980	& 30608	& 35002\\
Pet03\_randomA2 &
7	& 1000	& 24738	& --& 96266	& 178872& 189270& 178334& 171354& 177498& 178924& {\bf 166598}	& 177006& 169020& 178052\\
Pet03\_randomA3 &
7	& 1000	& 49820	& --& 244920& 360072& 381548& 359372& 350714& 358698& 359846& {\bf 343664}	& 357742& 347592& 359368\\
Pet03\_randomA4 &
7	& 1000	& 8177	& --& 26710	& 59134	& 62466	& 58842	& 54702	& 58128	& 58810	& {\bf 51290}	& 57198	& 52990	& 58048\\
Pet03\_randomG4 &
7	& 1000	& 8173	& --& 26697	& 59124	& 62296	& 58840	& 40294	& 39452	& 40402	& {\bf 31838}	& 46582	& 43626	& 56006\\
Pet03\_hc10 &
7	& 1024	& 5120	& --& 14336	& 26204	& 39034	& 36824	& 26280	& 35020	& 34452	& {\bf 25892}	& 27580	& 25940	& 26192\\
Pet03\_mesh33x33 &
7	& 1089	& 2112	& --& 4224	& 8294	& 16044	& 15298	& 8326	& 10478	& 10076	& {\bf 8224}	& 9172	& 8262	& 8286\\
Pet03\_bintree10 &
7	& 1023	& 1022	& --& 2044	& 7384	& 7752	& 7288	& 7384	& 6488	& {\bf 2856}
																				& 3030			& 3096	& 4418	& 6436\\
Pet03\_3elt &
7	& 4720	& 13722	& --& 27694	& 68636	& 132340& 120844& 73754	& 71042	& 86290	& {\bf 52420}	& --	& 55328	& 68576\\
Pet03\_airfoil1 &
7	& 4253	& 12289	& --& 24792	& 60750	& 118422& 107822& 64224	& 63170	& 78482	& {\bf 46768}	& --	& 49868	& 60466\\
Pet03\_crack &
7	& 10240	& 30380	& --& 69741	& 277928& 293238& 287434& 168802& 158460& 193856& {\bf 138178}	& --	& 175252& 269060\\
Pet03\_whitaker3 &
7	& 9800	& 28989	& --& 58482	& 154188& 279798& 273056& 153490& 147924& 191430& {\bf 126198}	& --	& 137200& 153272\\
Pet03\_big &
7	& 15606	& 45878	& --& 92511	& 261418& 442852& 442412& 286180& 264862& 317434& {\bf 186158}	& --	& 199324& 260680\\
{\color{grey} Pet03\_wave} &
{\color{grey} 7}
	& {\color{grey} 156317}
	 	& {\color{grey} 1059331}
			& {\color{grey} --}
				& {\color{grey} 3299657}
					& {\color{grey} 8223278}
						& {\color{grey} 14474822}
							& {\color{grey} 13238898}
								& {\color{grey} 9203558}
									& {\color{grey} 8150052}
										& {\color{grey} 9050366}
											& {\color{grey} --}
												& {\color{grey} --}
													&  {\color{grey} --}
														& {\color{grey} \bf 8110512}\\
Pet03\_c1y &
7	& 828	& 1749	& --& 4075	& 10224	& 13294	& 12326	& 8778	& 10426	& 9454	& {\bf 7508}	& 8834	& 8290	& 9652\\
Pet03\_c2y &
7	& 980	& 2102	& --& 4822	& 12218	& 16010	& 15044	& 10540	& 12132	& 11058	& {\bf 8724}	& 11174	& 9986	& 11592\\
Pet03\_c3y &
7	& 1327	& 2844	& --& 6436	& 16810	& 21678	& 20930	& 14652	& 17786	& 15230	& {\bf 12216}	& 16430	& 13676	& 16612\\
Pet03\_c4y &
7	& 1366	& 2915	& --& 6442	& 17168	& 22212	& 21438	& 14884	& 17320	& 15490	& {\bf 12460}	& 16434	& 13732	& 16872\\
Pet03\_c5y &
7	& 1202	& 2577	& --& 5750	& 15112	& 19450	& 18730	& 13452	& 15170	& 13618	& {\bf 10946}	&		& 11978	& 14520\\
Pet03\_gd95c &
7	& 62	& 144	& --& 320	& 474	& 780	& 608	& 446	& 460	& 492		& 410		& 500	& {\bf 404}
																												& 450\\
Pet03\_gd96a &
7	& 1096	& 1676	& --& 3800	& 12056	& 12736	& 12092	& 9514	& 11106	& 7990	& {\bf 7768}	& 		& 8308	& 11050\\
Pet03\_gd96b &
7	& 111	& 193	& --& 491	& 1030	& 1062	& 928	& 744	& 716	& 772		& {\bf 682}	& 726	& 788	& 696\\
Pet03\_gd96c &
7	& 65	& 125	& --& 250	& 574	& 680	& 542	& 426	& 448	& 448		& {\bf 378}	& 422	& 438	& 392\\
Pet03\_gd96d &
7	& 180	& 228	& --& 555	& 1028	& 1258	& 1188	& 1060	& 1126	& 876		& {\bf 838}	& 1006	& 886	& 958\\
			\hline
			\multicolumn{16}{c}{\parbox{\LTcapwidth}{}}\\
			\caption{\em Summary for the instances without a known optimal solution.}
			\label{tab:summaryForTheInstancesWithoutAKnownOptimum}
		\end{longtable}
	\end{center}

	\paragraph{List of acronyms}
		\begin{itemize}
			\item OS = optimal solution (if known).
			\item DB = degree bound.
			\item NAM = normal arrangement. The vertices $\{v_1,v_2,\ldots, v_n\}$ of the guest graph are mapped to the leaves of the $d$-regular
 tree in their canonical ordering, i.e.\  by $\phi(v_i)= b_i$, for $i=1,2,\ldots,n$. 
			\item RAM = random arrangement for $k = 1000$.
$k$ random mappings of the  vertices  of the guest graph  into the leaves of the $d$-regular
 tree are constructed, their objective function values are computed, and the random mapping with the best objective function value is selected.
			\item RCAM = random contiguous arrangement  for $k = 1000$. $k$ random contiguous mappings of the  vertices  of the guest graph  into the leaves of the $d$-regular
 tree are constructed, their objective function values are computed, and the random mapping with the best objective function value is selected.
			\item G2 = arrangement produced by the leaf-driven greedy heuristic, see~Section~\ref{sub:greedy}.
			\item BFSG = arrangement produced by the breadth-first search  based greedy heuristics which tries each vertex as the starting vertex, 
see~Section~\ref{sub:greedy}. If the graph has more then one connected components, they are arranged in a random order. 
			\item TFSG = arrangement produced by the depth-first search  based greedy heuristics which tries each vertex as the starting vertex, 
see~Section~\ref{sub:greedy}.If the graph has more then one connected components, they are arranged in a random order. 
			\item CHLS = arrangement produced by the construction heuristic which uses the local search approach to solve the MCBSSP, 
see~Section~\ref{sub:constrDescr}.
			\item PEHVNA = arrangement produced by the pair-exchange heuristic for vertices which starts with the normal arrangement, 
see~Section~\ref{subsub:PE}.
			\item SFHWI = arrangement produced by the shift-flip heuristic which accepts non-improving shifts, see~Section~\ref{subsub:SL}. 
The algorithm terminates  if  no improvement is reached after $3$ days of running time.
			\item -- = the solution could not be found in a reasonable amount of time.
		\end{itemize}
	
\end{landscape}


\restoregeometry

\normalsize


\end{document}